\documentclass[11pt,a4paper]{article}
\usepackage[margin=2.6cm]{geometry}
\usepackage{amsmath,amsthm,amssymb,amsfonts,mathtools}
\usepackage{booktabs,array}
\usepackage{enumitem}
\usepackage{tikz}
\usepackage{microtype}
\usepackage[hidelinks]{hyperref}
\theoremstyle{plain}
\newtheorem{theorem}{Theorem}[section]
\newtheorem{lemma}[theorem]{Lemma}
\newtheorem{corollary}[theorem]{Corollary}
\newtheorem{proposition}[theorem]{Proposition}
\newtheorem{fact}[theorem]{Fact}
\theoremstyle{definition}
\newtheorem{example}[theorem]{Example}

\DeclareMathOperator{\per}{per}
\DeclareMathOperator*{\Ex}{\mathbb{E}}
\newcommand{\LL}{\mathsf L}
\newcommand{\RR}{\mathsf R}
\newcommand{\JJ}{\mathbb J}

\title{\bfseries Rainbow maps and the number of graceful labelings of trees}
\author{Shisheng Li}
\date{}
\begin{document}\maketitle

\begin{abstract}\noindent
Let $\tau(N)$ be the number of graceful labelings of trees with $N$ edges, that is, the number of spanning trees of the complete graph on $\{0,1,\dots,N\}$ whose $N$ edges have distinct lengths $|i-j|$.
The answer to exercise 7.2.2.3--106 of \emph{The Art of Computer Programming} states that, if an empirical observation of Anick is true, then $\tau(n)=\exp(n\ln n-n\ln(e/b)+O(\log n))$ with $b\approx0.636$, and that rigorous proofs are unknown.
We prove that
\[
  \Bigl|\log\tau(N)-\Bigl(N\log N-N\log\frac eb\Bigr)\Bigr|\le152\,\bigl(1+\log(N+1)\bigr)^4\qquad(N\ge3),\qquad b=\frac2\pi .
\]
A \emph{rainbow map} of order $N$ sends each interior vertex $k\in\{1,\dots,N-1\}$ to a vertex $f(k)\in\{0,\dots,N\}$ so that the lengths $|f(k)-k|$ are $1,\dots,N-1$ in some order. The acyclic rainbow maps are exactly the graceful labelings, and the number of all rainbow maps is the Whitty number $W(N)$ (OEIS A134983).
We show that $\tau(N)\le W(N)\le\tau(N)\,e^{151(1+\log(N+1))^4}$ by a combinatorial argument: For fixed directions, rainbow maps are rook placements on a Ferrers board; a swap injection shows that a uniformly random rainbow map has $O(\log^2N)$ cycles on average; and an inequality on the periods of a word shows that the $c$ cycles of a rainbow map can be removed by changing $O(c\log N)$ edges.
We also show that $W(N)$ is the total weight of a family of weighted Motzkin paths. This yields exact enumerations of types, an alternating sum for $W(N)$, and a representation of $W(N)$ as a moment of a continuous dual Hahn weight, from which we obtain $|\log W(N)-(N\log\frac{2N}\pi-N)|\le5(1+\log(N+1))$.
The main theorem has been formalized in Lean~4, with two cited identities for the Gamma function taken as axioms; the combinatorial half uses no axioms beyond those of Lean.
\end{abstract}

%======================================================================
\section{Introduction}
\subsection{The problem}
A \emph{graceful labeling} of a tree with $N$ edges is a bijection from its vertices to $\{0,1,\dots,N\}$ such that the $N$ edge lengths $|i-j|$ ($i$ and $j$ being the labels of the two ends) are distinct. Using the labels as vertex names, a graceful labeling is a spanning tree of the complete graph on $\{0,\dots,N\}$ whose edges have distinct lengths; we define $\tau(N)$ to be the number of such spanning trees. (Thus a labeling and its complement $i\mapsto N-i$ are counted separately unless they give the same tree.) With this definition $\tau(N)=\mathrm{A033472}(N+1)$ \cite{oeis}.
Exercise 7.2.2.3--106 of \emph{The Art of Computer Programming} \cite{taocp7}, rated \textsf{[HM46]}, asks for the asymptotics of $\tau(n)$. The answer in the first printing reads:
\begin{quote}\small
\textbf{106.} Empirical investigations by D.\ Anick suggest that $\tau(n)/\tau(n-1)$ grows
approximately as $a+bn+(-1)^n c/n$ for some constants $a$, $b$, $c$. If that is true,
$\tau(n)=\exp(n\ln n-n\ln(e/b)+O(\log n))$. The exact values for $n<30$ suggest further that
$a\approx0.19$, $b\approx0.636$, and $c\approx0.42$. But rigorous proofs are unknown.
(This function $\tau(n)$ was introduced by A.\ Kotzig, who computed it by hand for
$n\le6$ in 1984.)
\end{quote}
Anick's study appeared in \cite{anick}.

\subsection{Results}\label{ssec:results}
Throughout, $m:=N-1$, $\Lambda:=1+\log(N+1)$, and $\log$ is the natural logarithm. (The constants $a$, $b$, $c$ of the quoted answer appear only here and in Section~\ref{sec:concl}; elsewhere these letters have local meanings, and $c$ usually denotes a number of cycles.)
Let $W(N)$ be the number of rainbow maps of order $N$ (Section~\ref{sec:setup}); it is the permanent of an explicit matrix with entries $0$, $1$, $2$.
\begin{theorem}[Comparison]\label{thm:compare}
For $N\ge3$, $\quad\tau(N)\le W(N)\le\tau(N)\,\exp\bigl(151\,\Lambda^4\bigr)$.
\end{theorem}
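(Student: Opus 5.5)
The lower bound $\tau(N)\le W(N)$ comes from an injection, and the upper bound from bounding how many rainbow maps collapse onto each graceful labeling.

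\emph{Lower bound.} Take a graceful tree on $\{0,\dots,N\}$ and root it at the vertex incident to the edge of length $N$. That edge must be $\{0,N\}$, so the root is $0$ or $N$. For definiteness, root at $N$. Every other vertex $v$ then has a parent $p(v)$, and I define $f(v)=p(v)$ for the interior vertices $1,\dots,N-1$.

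The endpoint $0$ has parent $N$ via the length-$N$ edge. The remaining edges have lengths $1,\dots,N-1$, and each such edge is $\{v,p(v)\}$ for a unique interior $v$. Hence $f$ is a rainbow map. Its functional graph, with $0$ and $N$ as sinks, is acyclic and recovers the tree, so this map is injective.

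Conversely, an acyclic rainbow map plus the edge $\{0,N\}$ gives a spanning tree with $N$ distinct edge lengths. So $\tau(N)$ equals the number of acyclic rainbow maps, and $\tau(N)\le W(N)$ follows.

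\emph{Upper bound.} I will build a map $\Phi$ from all rainbow maps to acyclic ones such that each acyclic map has at most $e^{151\Lambda^4}$ preimages. There are three steps.

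\emph{Step 1: cycles are few on average.} Fix the direction vector $\epsilon\in\{\pm\}^{N-1}$, i.e.\ whether $f(k)>k$ or $f(k)<k$. Then a rainbow map is a bijection from interior vertices to lengths with a staircase constraint. This is a rook placement on a Ferrers board, and I plan to show $\Ex[c]=O(\Lambda^2)$ under the uniform measure. The method is a swap injection: given a map with a cycle of length $\ell$, exchange the lengths of two vertices on the cycle to break it. This produces a map with fewer cycles, and the resulting map is recoverable from $O(\log N)$ bits of data. This should give $\Pr[c\ge t]\le e^{-\Omega(t/\Lambda)}$, or at least a first-moment bound. Summing over directions then controls the contribution of maps with $c\ge C\Lambda^2$ to a negligible fraction of $W(N)$.

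\emph{Step 2: repair with few edge changes.} A map with $c$ cycles is sent to an acyclic rainbow map by changing $O(c\Lambda)$ values of $f$. The idea is to cut each cycle by redirecting one vertex toward a sink. This breaks the rainbow property, and restoring it requires a cascade of length swaps. I would control the cascade with a periodicity (Fine–Wilf type) inequality on the word of directions, guaranteeing a swap partner within $O(\log N)$ steps.

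\emph{Step 3: counting preimages.} If $f$ and $\Phi(f)$ differ in $r=O(\Lambda^3)$ positions, then each acyclic map has at most $\binom{N}{r}N^{r}\le e^{O(r\Lambda)}=e^{O(\Lambda^4)}$ preimages. Combining this with Step 1, which ensures the typical $c$ is $O(\Lambda^2)$, gives $W(N)\le 2\,\tau(N)e^{O(\Lambda^4)}$. Tracking constants should yield $151$.

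The main obstacle is Step 2: showing that the cascade of corrections after cutting a cycle terminates within $O(\log N)$ changes without creating new cycles. The word-periods inequality is exactly the tool for that, and I expect it to be the delicate part.
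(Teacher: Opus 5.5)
Your lower bound is correct: rooting at $N$ gives exactly the paper's orientation towards the ground. Your three-step outline of the upper bound (few cycles per type, cheap repair, Markov plus an encoding count) also matches the paper. But the two substantive steps are not supplied, and the mechanisms you sketch would not deliver them.

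\textbf{Step 1.} An injection that breaks a cycle and is inverted with $O(\log N)$ bits only shows that each map arises from at most $N^{O(1)}$ pairs (map, cycle). That bounds nothing. A switching argument needs many forward switches per bad configuration against few backward ones. Also, exchanging the lengths of two cycle vertices can violate the bound $\ell_k$ of one of them and can create new cycles.

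The missing idea is monotonicity on a Ferrers board. Fix any partial history, an unrevealed row $k$, and unused lengths $d<d'\le\ell_k$. Swapping $k$ with the row that uses $d'$ injects completions with $\delta_k=d$ into those with $\delta_k=d'$. Hence $\Pr[k\to v\mid\text{history}]\le1/(\rho(v)+1-n)$ after $n\le\rho(v)$ revealed cells. Revealing the forward path from $v$ then gives $\Pr[v\text{ lies on a }t\text{-cycle}]\le1/(\rho(v)-t+2)$.

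Summing this bound gives $O(\mathcal H_R^2)$, but only over cycles most of whose vertices have rank $\ge2t$. Cycles dominated by vertices of rank $<2t$ need a deterministic count: there are fewer than $4t$ such vertices and cycles are disjoint, so there are $O(1)$ such cycles per dyadic range of $t$. Your sketch never confronts these low-rank vertices, where the probability bound is useless.

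\textbf{Step 2.} The periodicity inequality is not applied to a word of directions, which has no relevant periods. It is applied to the word $w_0\ldots w_N$ of component labels of the graph formed by all edges with lengths in $\Delta\cup\{N\}$. Every such length is a period of this word, so the border bound $|w|-\mathrm{sym}(w)\ge2\,\mathrm{bor}(w)-1$ shows that $k$ lengths support an acyclic edge set of size $\min(2k,m)$ in the contracted graph.

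That is what makes a breadth-first search over lengths double at each level. The relation is $d\to d'$ when an edge of length $d$ closes a cycle through an edge of length $d'$ of the current forest. A directly insertable length is therefore reached within $\lceil\log_2m\rceil$ levels. Taking a \emph{shortest} chain is what guarantees that the exchanged set is again a forest; you name this as the obstacle but do not resolve it.

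\textbf{Steps 2 and 3 do not fit together.} This exchange argument bounds $|X(f)\triangle Y|$ for \emph{undirected} edge sets, not the number of changed values of $f$. The added and removed edges need not share endpoints, and orienting the new tree towards the ground can reverse an arbitrarily long path. So your preimage count $\binom{N}{r}N^{r}$ does not apply. The paper instead recovers $f$ from three pieces of data: $Y$, the set $Z=X(f)\triangle Y$ (at most $(|E|+1)^Q$ choices), and one orientation bit per cycle, since all other orientations are forced. This encoding is what yields the factor $e^{O(\Lambda^4)}$.
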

\begin{theorem}[Size of the Whitty number $W(N)$]\label{thm:perm}
For $N\ge3$,
\[
  \frac17\Bigl(\frac2\pi\Bigr)^{m}m!\ \le\ W(N)\ \le\ 45\Bigl(\frac2\pi\Bigr)^{m}(m+3)!,\qquad\text{hence}\qquad
  \Bigl|\log W(N)-\Bigl(N\log\frac{2N}\pi-N\Bigr)\Bigr|\ \le\ 5\Lambda .
\]
\end{theorem}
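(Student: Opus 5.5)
The plan is the route sketched in the abstract: write $W(N)$ as the total weight of a family of weighted Motzkin paths, turn that weight into a moment of an explicit continuous weight, and evaluate the moment by Laplace's method. The two-sided bound on $\log W(N)$ then follows from Stirling's formula.

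\emph{Step 1: the path model.} For a rainbow map, fix a direction for each length $\ell\in\{1,\dots,m\}$: the edge of length $\ell$ goes left or right. For fixed directions, the admissible assignments are rook placements on a Ferrers board. I would process lengths from $m$ down to $1$ (equivalently, sweep positions) and record as the height the number of currently ``open'' rows or columns. This gives a Motzkin path whose up, down and level steps carry weights polynomial in the height, with level steps of weight $2$ reflecting the $0,1,2$ entries of the matrix. Summing over directions should give $W(N)=\sum_{\text{paths}}\prod(\text{step weights})$. I expect the down weights to be $\lambda_h\sim h^2$, up to boundary corrections at the two endpoints $0$ and $N$.

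\emph{Step 2: orthogonal polynomials.} The Jacobi continued fraction with coefficients $b_h$ linear in $h$ and $\lambda_h$ quadratic in $h$ is the Stieltjes transform of a continuous dual Hahn weight. Concretely I expect
\[
W(N)=\int_0^\infty x^{m}\,w(x)\,dx,\qquad w(x)\propto \Bigl|\frac{\Gamma(a+i\sqrt x)\Gamma(b+i\sqrt x)\Gamma(c+i\sqrt x)}{\Gamma(2i\sqrt x)}\Bigr|^2
\]
for explicit small parameters $a,b,c$. The $N$-dependence from the endpoints may enter through the parameters or through a finite-rank modification of the path weights. Identifying this weight exactly, and handling those boundary effects, is the main obstacle. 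What I would try first is to compute the first few $W(N)$ from the OEIS data, guess the parameters by matching moments, and then verify the Jacobi coefficients by the standard three-term recurrence of continuous dual Hahn polynomials.

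\emph{Step 3: asymptotics.} By $|\Gamma(\alpha+iy)|^2\sim 2\pi y^{2\alpha-1}e^{-\pi y}$ and $|\Gamma(2iy)|^{-2}\sim \tfrac{2y}{\pi}\,e^{2\pi y}$, the weight behaves like $w(x)\asymp x^{\beta}e^{-\pi\sqrt x}$ for an explicit exponent $\beta$. I would substitute $x=y^2$ and use the two-sided bounds from the cited Gamma identities (the axioms mentioned in the abstract). This gives
\[
W(N)\asymp \int_0^\infty y^{2m+2\beta+1}e^{-\pi y}\,dy=\Gamma(2m+2\beta+2)\,\pi^{-(2m+2\beta+2)}.
\]
This has order $(2m)!/\pi^{2m}$, which does not match $(2/\pi)^m m!$. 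So the normalization must differ: the quadratic growth of $\lambda_h$ must come with a different constant, e.g.\ $\lambda_h\sim h^2/4$ or the weight taken in the variable $\sqrt x$. I would therefore adjust Step 2 so that the moment reads $\int y^{m+O(1)}e^{-\pi y/2}\,dy$-type. The target $(2/\pi)^m m!$ is exactly $\int_0^\infty y^m e^{-\pi y/2}\,dy\cdot \tfrac{\pi}{2}$, which fixes the scaling.

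\emph{Step 4: explicit constants.} With explicit lower and upper bounds on $w$ of the form $C_1 y^{\beta_1}e^{-\pi y/2}$ and $C_2 y^{\beta_2}e^{-\pi y/2}$, the lower bound $\tfrac17(2/\pi)^m m!$ and the upper bound $45(2/\pi)^m(m+3)!$ follow by integrating. The shift to $(m+3)!$ absorbs the polynomial prefactor. Finally, Stirling's bounds $\log m!=m\log m-m+O(\log m)$, together with $m=N-1$ and $\log(m+3)!-\log m!\le 3\log(N+2)$, give $|\log W(N)-(N\log\tfrac{2N}{\pi}-N)|\le 5\Lambda$ after routine bookkeeping of the constants for $N\ge3$.
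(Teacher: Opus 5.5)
Your outline has the same overall shape as the paper's argument: a path model, a continuous dual Hahn moment, density bounds of the form polynomial${}\times e^{-\pi x}$, factorial integrals, and explicit Stirling bounds. But the step that carries the real content is never established. That step is an exact identity expressing $W(N)$ as a moment of an explicit positive measure. You call it the main obstacle and propose to guess it from data.

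As written, your ansatz cannot hold. A single positive weight with $W(N)=\int_0^\infty x^{m}w(x)\,dx$ would make $(W(N))$ a moment sequence. But $W(1)W(3)=4=W(2)^2$ forces equality in Cauchy--Schwarz, i.e.\ a point mass, while $W(4)=12\ne8$. Even restricted to $N\ge3$, the $3\times3$ Hankel determinant of $W(3),\dots,W(7)=4,12,40,168,784$ is $-3072<0$. So any representation must split by parity, and it must use a variable $\omega\approx4x^2$ with exponent about $m/2$.

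Guess-and-verify via the three-term recurrence does not close the gap either. The correct path model fixes a direction for each \emph{vertex}. (Fixing one per length, as you suggest, makes the allowed rows of column $\ell$ either $\{1,\dots,N-\ell\}$ or $\{\ell,\dots,m\}$; these are not nested, so the board is not Ferrers.) It then groups the two vertices of each rank into one step. The result is $\lfloor(N-1)/2\rfloor$ steps from height $1$, with weights $x(x+1)$, $2x^2$, $x(x-1)$, a \emph{free endpoint}, and a terminal weight ($1$, or $2x$ for even $N$). Flajolet--Viennot therefore gives no J-fraction for $W$ that could be compared with the continuous dual Hahn recurrence. The S-fraction the paper records for $W(2s+1)$ is a consequence of the moment representation, not an input. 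Also, the continuous dual Hahn J-fraction coefficients in the variable $x^2$ are quadratic and quartic, not linear and quadratic as you state.

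The paper bridges exactly this gap, in four steps.
\begin{enumerate}[label=(\roman*),leftmargin=2.2em,itemsep=1pt]
\item The conjugation $x\,(Jf)(x)=T(xf)(x)$, with $(TF)(x)=x^2\bigl(F(x+1)+2F(x)+F(x-1)\bigr)$, gives $W(2q+1)=4(T^{q-1}x^3)(1)$ and $W(2q+2)=2(T^qx^2)(1)$.
\item For $p_k(\omega)=\prod_{j<k}(\omega+\lambda_j)$, the bridge lemma evaluates $(p_k(T)F)(1)$ as $(k!)^2$ times a ballot-number sum. Hence $(p_k(T)x^3)(1)=(2k+2)!/2$ and $(p_k(T)x^2)(1)=(2k+1)!$.
\item With $\omega=4(x^2+\frac1{16})$ one has $p_k(\omega)=4^k(\alpha+ix)_k(\alpha-ix)_k$, with $\alpha=\frac34$ or $\frac14$ by parity. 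Wilson's integral then gives the same factorials for $\Ex_\mu[p_k(\omega)]$, with $\mu=\mu_{3/4,3/4,5/4}$ or $\mu_{1/4,3/4,5/4}$.
\item Linearity yields $W(2q+1)=4\Ex_{\mu_{\mathrm o}}[\omega^{q-1}]$ and $W(2q+2)=2\Ex_{\mu_{\mathrm e}}[\omega^{q}]$.
\end{enumerate}

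This also resolves your scaling problem without circularity. In Step 3 you calibrate the model to the target $(2/\pi)^mm!$, which assumes the very constant the theorem asserts. In the paper the constant comes out of the computation: $4^s(2s)!/\pi^{2s}=(2/\pi)^{2s}(2s)!$ with $2s\approx m$. Finally, to get the stated constants the paper needs three further ingredients: the lower density bound only for $x\ge1$, an explicit two-sided form of Stirling's bounds, and a direct check of $N\le9$ against the table.
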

\begin{theorem}[Main theorem]\label{thm:main}
For every $N\ge3$,
\[
  \Bigl|\log\tau(N)-\Bigl(N\log\frac{2N}\pi-N\Bigr)\Bigr|\ \le\ 152\,\Lambda^4 ,
\]
and $N\log\frac{2N}\pi-N=N\log N-N\log\frac eb$ with $b=\frac2\pi$. In particular $\tau(N)=N!\,(2/\pi)^Ne^{O(\log^4N)}$.
\end{theorem}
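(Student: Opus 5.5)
The main theorem follows from Theorem~\ref{thm:compare} and Theorem~\ref{thm:perm} by taking logarithms and combining the two error terms. No further combinatorics is needed. Write $F(N):=N\log\frac{2N}\pi-N$.

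For the upper bound on $\log\tau(N)$, the left inequality of Theorem~\ref{thm:compare} gives $\tau(N)\le W(N)$. Theorem~\ref{thm:perm} then gives
\[\log\tau(N)\le\log W(N)\le F(N)+5\Lambda .\]
Since $\Lambda\ge 1+\log 4>1$, we have $5\Lambda\le 5\Lambda^4\le152\Lambda^4$.

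For the lower bound, the right inequality of Theorem~\ref{thm:compare} gives $\tau(N)\ge W(N)\exp(-151\Lambda^4)$. Combining with Theorem~\ref{thm:perm},
\[\log\tau(N)\ge F(N)-5\Lambda-151\Lambda^4 .\]
So it suffices to show $5\Lambda\le\Lambda^4$. This holds because $\Lambda^3\ge(1+\log4)^3>2.38^3>13>5$ for $N\ge3$. Both bounds together give $|\log\tau(N)-F(N)|\le152\Lambda^4$.

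Next, check the identity $F(N)=N\log N-N\log\frac eb$ with $b=2/\pi$. Indeed,
\[N\log N-N\log\frac eb=N\log N-N+N\log\frac2\pi=N\log\frac{2N}{\pi}-N .\]

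Finally, for the ``in particular'' statement, use Stirling's formula in the form $\log N!=N\log N-N+O(\log N)$. This gives
\[F(N)=\log N!+N\log\frac2\pi+O(\log N)=\log\Bigl(N!\,(2/\pi)^N\Bigr)+O(\log N).\]
The $O(\log N)$ term is absorbed into $O(\Lambda^4)=O(\log^4N)$, so $\tau(N)=N!\,(2/\pi)^Ne^{O(\log^4N)}$.

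All the difficulty lies in the two cited theorems. The only point needing care here is the constant bookkeeping $151\Lambda^4+5\Lambda\le152\Lambda^4$, which uses $\Lambda^3\ge5$ for $N\ge3$.
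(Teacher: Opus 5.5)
Your proposal is correct and is essentially the paper's own proof: combine the two-sided comparison $-151\Lambda^4\le\log\tau(N)-\log W(N)\le0$ from Theorem~\ref{thm:compare} with the $5\Lambda$ estimate of Theorem~\ref{thm:perm}, and absorb the $5\Lambda$ term using $5\Lambda\le\Lambda^4$ (valid since $\Lambda\ge2.38$ for $N\ge3$). The ``in particular'' part likewise follows from the factorial bounds \eqref{eq:fact}, which are exactly the Stirling-type estimate you invoke.
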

\begin{proof}[Proof of Theorem~\ref{thm:main} from Theorems~\ref{thm:compare} and~\ref{thm:perm}]
Theorem~\ref{thm:compare} gives $-151\Lambda^4\le\log\tau(N)-\log W(N)\le0$, and Theorem~\ref{thm:perm} gives $|\log W(N)-(N\log\frac{2N}\pi-N)|\le5\Lambda$. Add them and use $5\Lambda\le\Lambda^4$, which holds because $\Lambda\ge2.38$ for $N\ge3$. The last assertion follows from \eqref{eq:fact} below.
\end{proof}
\noindent Section~\ref{sec:struct} also gives a Motzkin path formula for $W(N)$ (Theorem~\ref{thm:motzkin}) and several exact enumerations derived from it.

\subsection{Approach}
A graceful labeling is a spanning tree subject to two constraints: It has no cycle, and its edge lengths are distinct. Every such tree contains the edge $\{0,N\}$, the only edge of length $N$. Delete this edge and orient every other edge towards whichever of $0$ and $N$ lies in its component; then every vertex other than $0$ and $N$ has exactly one outgoing edge, and we obtain a rainbow map. If we drop the condition ``no cycle'', the remaining object is of product type: Each vertex chooses an outgoing edge, subject only to the lengths being distinct. The number of such choices is a permanent.
This relaxation is due to Whitty \cite{whitty}.
Theorem~\ref{thm:compare} shows that the relaxation loses almost nothing, and Theorems~\ref{thm:motzkin} and~\ref{thm:perm} compute the permanent.

\emph{Outline.} Section~\ref{sec:setup} defines rainbow maps, their types, and the Whitty numbers. Section~\ref{sec:struct} reads a type as a lattice path. Sections~\ref{sec:cycles}--\ref{sec:count} prove Theorem~\ref{thm:compare}: There are few cycles, cycles are easy to repair, and a counting argument finishes the proof. Section~\ref{sec:perm} determines the size of $W(N)$.

\emph{Formal verification.} The main theorem has been formalized in Lean~4 \cite{lean4} with Mathlib \cite{mathlib} (ancillary files, directory \texttt{anc/lean}). The formal statement uses only the definition of $\tau(N)$ as the number of spanning trees of the complete graph on $\{0,\dots,N\}$ with edge lengths $1,\dots,N$. The combinatorial half (Theorem~\ref{thm:compare}) is proved with no axioms beyond those of Lean; the analytic half uses Facts~\ref{fact:W} and~\ref{fact:P} as axioms, and Theorem~\ref{thm:fine} also uses Fact~\ref{fact:S}.

\emph{Notation.} $N$: number of edges, also the order of a rainbow map; $m=N-1$: number of interior vertices, also the number of available lengths; $\rho(k)=\min(k,N-k)$: the rank of $k$; $R=\lfloor N/2\rfloor$; $\delta_k$: the length used by $k$; $\sigma$: type; $\ell_k$: the upper bound of $\delta_k$ for a given type; $c$: number of cycles; $\mathcal H_R=\sum_{j\le R}1/j$; $D=\lceil\log_2m\rceil$ (for $N\ge2$); $\Lambda=1+\log(N+1)$; $[P]$ is $1$ if the statement $P$ is true and $0$ otherwise.

%======================================================================
\section{Rainbow maps}\label{sec:setup}
The vertices are $0,1,\dots,N$. We call $0$ and $N$ the \emph{ground} and $1,\dots,m$ the \emph{interior vertices}. The \emph{rank} of an interior vertex $k$ is $\rho(k):=\min(k,N-k)$, its distance to the nearer end. Each rank is shared by at most two vertices ($k$ and $N-k$), and there are $R:=\lfloor N/2\rfloor$ ranks.

\begin{quote}
\textbf{Definition.} A \emph{rainbow map} of order $N$ is a function $f:\{1,\dots,m\}\to\{0,\dots,N\}$ such that the numbers $|f(k)-k|$ are $1,\dots,m$, each exactly once.
\end{quote}
The name \emph{rainbow map} is due to D.~E.~Knuth (private communication). We call $f(k)$ the \emph{target} of $k$ and $\delta_k:=|f(k)-k|$ its \emph{length}. Drawing the arcs $k\to f(k)$ and the arc $0\to N$ gives a directed graph in which every interior vertex has exactly one outgoing arc.
The \emph{type} $\sigma\in\{\LL,\RR\}^m$ records the directions: $\sigma_k=\LL$ if and only if $f(k)<k$. Given the type, the length of $k$ is bounded by
\[
  \ell_k:=\begin{cases}k,&\sigma_k=\LL,\\ N-k,&\sigma_k=\RR,\end{cases}\qquad 1\le\delta_k\le\ell_k ,
\]
and $f$ is determined by $\sigma$ and $(\delta_1,\dots,\delta_m)$.

\textbf{Whitty numbers.} Let $W(N)$ be the number of rainbow maps of order $N$. If vertex $k$ receives length $d$, the number of possible directions is
\[
  M_{k,d}:=[d\le k]+[d\le N-k]\in\{0,1,2\},\qquad 1\le k,d\le m ,
\]
so that
\[
  W(N)=\sum_\delta\prod_{k=1}^{m}M_{k,\delta_k}=\per M ,
\]
the sum being over all permutations $\delta$ of $\{1,\dots,m\}$. For $N=6$,
\[
  M=\begin{pmatrix}2&1&1&1&1\\2&2&1&1&0\\2&2&2&0&0\\2&2&1&1&0\\2&1&1&1&1\end{pmatrix}
\]
and $W(6)=168$. Rows $k$ and $N-k$ are equal, and vertices of small rank can use more lengths. The sequence $W(N)$ begins $1,2,4,12,40,168,784,4272,\dots$ ($N=1,2,\dots$); it is OEIS A134983.
Whitty \cite[Theorem~7]{whitty} interprets this permanent as the number of rook placements on a Klein bottle chessboard and writes ``We do not know of a way of evaluating the permanent involved in closed form''. We call $W(N)$ the \emph{Whitty numbers}.

\begin{figure}[t]\centering
\begin{tikzpicture}[x=1.05cm,>=stealth]
 \begin{scope}
 \foreach \i in {1,...,5} \node[circle,draw,inner sep=1.6pt] (a\i) at (\i,0) {\small\i};
 \node[circle,draw,fill=black!15,inner sep=1.6pt] (a0) at (0,0) {\small0}; \node[circle,draw,fill=black!15,inner sep=1.6pt] (a6) at (6,0) {\small6};
 \draw[->] (a1) to[bend left=50] node[pos=.5,fill=white,inner sep=1pt,font=\scriptsize]{1} (a0);
 \draw[->] (a3) to[bend left=50] node[pos=.35,fill=white,inner sep=1pt,font=\scriptsize]{2} (a1);
 \draw[->] (a4) to[bend left=60] node[pos=.35,fill=white,inner sep=1pt,font=\scriptsize]{3} (a1);
 \draw[->] (a2) to[bend left=50] node[pos=.5,fill=white,inner sep=1pt,font=\scriptsize]{4} (a6);
 \draw[->] (a5) to[bend left=60] node[pos=.35,fill=white,inner sep=1pt,font=\scriptsize]{5} (a0);
 \node at (3,-2.3) {\small (a)};
 \end{scope}
 \begin{scope}[xshift=8cm]
 \foreach \i in {1,...,5} \node[circle,draw,inner sep=1.6pt] (b\i) at (\i,0) {\small\i};
 \node[circle,draw,fill=black!15,inner sep=1.6pt] (b0) at (0,0) {\small0}; \node[circle,draw,fill=black!15,inner sep=1.6pt] (b6) at (6,0) {\small6};
 \draw[->,thick] (b1) to[bend left=50] node[pos=.4,fill=white,inner sep=1pt,font=\scriptsize]{2} (b3);
 \draw[->,thick] (b3) to[bend left=50] node[pos=.5,fill=white,inner sep=1pt,font=\scriptsize]{1} (b4);
 \draw[->,thick] (b4) to[bend left=50] node[pos=.5,fill=white,inner sep=1pt,font=\scriptsize]{3} (b1);
 \draw[->] (b2) to[bend left=50] node[pos=.6,fill=white,inner sep=1pt,font=\scriptsize]{4} (b6);
 \draw[->] (b5) to[bend left=60] node[pos=.5,fill=white,inner sep=1pt,font=\scriptsize]{5} (b0);
 \node at (3,-2.3) {\small (b)};
 \end{scope}
\end{tikzpicture}
\caption{Two rainbow maps of order $6$. Arcs to the right are drawn above the line, arcs to the left below it; the numbers are the lengths, each of $1,\dots,5$ used once. The ground vertices $0$ and $6$ are shaded.
(a) Together with the edge $\{0,6\}$ this is a gracefully labeled tree. (b) A rainbow map with the cycle $1\to3\to4\to1$.}\label{fig:config}
\end{figure}

In the directed graph the two ground vertices are absorbing (apart from $0\to N$). Let $c(f)$ be the number of directed cycles. Every cycle has at least $3$ vertices, because $u\to v\to u$ would use the length $|u-v|$ twice.

\begin{lemma}[cf.\ {\cite[Theorem~6 and the remarks before Theorem~7]{whitty}}]\label{lem:inj}
The underlying undirected graph of $f$, together with the edge $\{0,N\}$, is a tree if and only if $c(f)=0$. Acyclic rainbow maps correspond bijectively to graceful labelings; hence $\tau(N)=\#\{f:c(f)=0\}\le W(N)$.
\end{lemma}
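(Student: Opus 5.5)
The plan has three parts: analyse the functional graph of $f$, prove the tree criterion, and then set up the bijection with graceful labelings.

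\emph{Structure of the graph.} Let $G$ be the directed graph on $\{0,\dots,N\}$ with arcs $k\to f(k)$ for $1\le k\le m$, together with the arc $0\to N$. It has $N+1$ vertices and exactly $m+1=N$ arcs. Every vertex other than $N$ has out-degree exactly one, and $N$ has out-degree zero.

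We first check that $G$ has no loops and no parallel edges. Loops are impossible because $\delta_k\ge1$. Two arcs $u\to v$ and $v\to u$ would repeat the length $|u-v|$, which is excluded. The arc $0\to N$ is the only one of length $N$, so no arc $k\to f(k)$ duplicates it. Hence the underlying undirected graph $U$ is a simple graph with $N+1$ vertices and $N$ edges.

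Following out-arcs from any vertex, the walk either reaches $N$, where it stops, or it enters a directed cycle; note that from $0$ it goes to $N$. A directed cycle cannot contain $N$, since $N$ has no out-arc. So $G$ is a disjoint union of a tree component containing $N$ (and $0$) and one unicyclic component for each directed cycle. Consequently, if $c(f)=0$, every vertex reaches $N$, so $U$ is connected with $N$ edges on $N+1$ vertices, hence a tree. Conversely, if $c(f)\ge1$, a directed cycle of length at least $3$ (as noted before the lemma) is an undirected cycle in $U$, so $U$ is not a tree.

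\emph{Bijection.} Given an acyclic $f$, the tree $U$ has edge lengths $1,\dots,m$ together with $N$, all distinct, so it is a graceful labeling in the sense of $\tau(N)$. The map $f\mapsto U$ is injective: $f$ is recovered by deleting the edge $\{0,N\}$ from $U$ and orienting each remaining edge toward whichever of $0$ or $N$ lies in its component of $U-\{0,N\}$. This works because in $G$ minus the arc $0\to N$, each component is an in-tree rooted at $0$ or at $N$, and its arcs point toward the root.

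For surjectivity, take a spanning tree $T$ with distinct lengths. Its lengths are exactly $1,\dots,N$, so it contains the edge $\{0,N\}$. Deleting this edge leaves two components, one containing $0$ and one containing $N$. Orient every other edge toward the root of its component and let $f(k)$ be the parent of $k$. Each interior vertex gets exactly one out-arc, and the lengths $|f(k)-k|$ are $1,\dots,m$, each once, so $f$ is a rainbow map. It is acyclic by construction, and its graph $U$ is $T$.

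Therefore $\tau(N)=\#\{f:c(f)=0\}\le W(N)$. The only delicate point is excluding a digon $u\leftrightarrow v$ and duplicates of $\{0,N\}$, which is what makes the edge count $N$ in a simple graph valid; everything else is standard functional-graph reasoning.
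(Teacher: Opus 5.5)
Your proof is correct and takes essentially the paper's approach. One direction deletes $\{0,N\}$ and orients each edge toward the ground vertex of its component. The other counts $N$ edges on $N+1$ vertices, uses that following arcs of an acyclic map always reaches the ground, and notes that a directed cycle has at least $3$ vertices and so is an undirected cycle. Your explicit check that the underlying graph is simple, which the paper leaves implicit in the distinctness of the lengths, is extra care rather than a different route.
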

\begin{proof}
Given a graceful labeling, delete the edge $\{0,N\}$; the tree splits into two trees containing $0$ and $N$ respectively. Orient every edge towards the ground vertex of its tree. Every interior vertex has exactly one outgoing edge and the $m$ lengths are $1,\dots,m$, so this is an acyclic rainbow map.
Conversely, the $m$ edges of an acyclic rainbow map together with $\{0,N\}$ are $N$ edges on $N+1$ vertices. Following arcs from any interior vertex never closes a cycle, so it reaches the ground; the graph is connected, hence a tree, and its lengths are $1,\dots,N$. The two maps are inverse to each other. Finally, a directed cycle has at least $3$ vertices, so it is a cycle of the underlying graph, which is then not a tree.
\end{proof}
\noindent Thus $\tau(N)/W(N)=\Pr[c=0]$ for the uniform measure on all rainbow maps of order $N$. Reversing a cycle of a rainbow map gives another rainbow map with the same edges.

\begin{example}\label{ex:small}
$W(5)=40=\tau(5)$. For $N=6$, $W(6)=168$ and $\tau(6)=164$; the four extra rainbow maps each contain a triangle, for example the map of Figure~\ref{fig:config}(b). Table~\ref{tab:values} lists $\tau(N)$, $W(N)$ and their ratio for $N\le30$.
\end{example}
\begin{table}[t]\centering\footnotesize
\caption{$\tau(N)$ (from OEIS A033472; recomputed by exhaustive search for $N\le12$) and $W(N)$ (computed exactly by Theorem~\ref{thm:altsum}; for $N\le15$ in agreement with the terms listed in OEIS A134983).}\label{tab:values}
\begin{tabular}{@{}rrrr@{}}
\toprule
$N$ & $\tau(N)$ & $W(N)$ & $\tau(N)/W(N)$\\
\midrule
1 & 1 & 1 & 1.0000\\
2 & 2 & 2 & 1.0000\\
3 & 4 & 4 & 1.0000\\
4 & 12 & 12 & 1.0000\\
5 & 40 & 40 & 1.0000\\
6 & 164 & 168 & 0.9762\\
7 & 752 & 784 & 0.9592\\
8 & 4\,020 & 4\,272 & 0.9410\\
9 & 23\,576 & 25\,504 & 0.9244\\
10 & 155\,632 & 171\,168 & 0.9092\\
11 & 1\,112\,032 & 1\,242\,688 & 0.8949\\
12 & 8\,733\,628 & 9\,915\,072 & 0.8808\\
13 & 73\,547\,332 & 84\,714\,112 & 0.8682\\
14 & 670\,789\,524 & 783\,454\,848 & 0.8562\\
15 & 6\,502\,948\,232 & 7\,697\,178\,880 & 0.8448\\
16 & 67\,540\,932\,632 & 80\,965\,577\,472 & 0.8342\\
17 & 740\,949\,762\,580 & 898\,994\,002\,432 & 0.8242\\
18 & 8\,634\,364\,751\,264 & 10\,599\,312\,542\,208 & 0.8146\\
19 & 105\,722\,215\,202\,120 & 131\,228\,986\,811\,392 & 0.8056\\
20 & 1\,366\,258\,578\,159\,064 & 1\,714\,108\,476\,337\,152 & 0.7971\\
21 & 18\,468\,456\,090\,865\,364 & 23\,410\,457\,140\,701\,184 & 0.7889\\
22 & 262\,118\,487\,952\,306\,820 & 335\,566\,486\,059\,952\,128 & 0.7811\\
23 & 3\,877\,299\,383\,532\,914\,224 & 5\,011\,182\,724\,307\,685\,376 & 0.7737\\
24 & 59\,953\,554\,967\,155\,428\,756 & 78\,206\,210\,287\,187\,177\,472 & 0.7666\\
25 & 963\,196\,316\,865\,520\,244\,324 & 1\,267\,646\,452\,094\,243\,676\,160 & 0.7598\\
26 & 16\,118\,315\,603\,067\,394\,684\,460 & 21\,396\,344\,618\,478\,777\,704\,448 & 0.7533\\
27 & 279\,476\,688\,842\,701\,986\,223\,480 & 374\,097\,809\,126\,332\,231\,991\,296 & 0.7471\\
28 & 5\,032\,064\,364\,362\,833\,199\,584\,976 & 6\,790\,389\,880\,757\,556\,630\,700\,032 & 0.7411\\
29 & 93\,661\,896\,978\,248\,342\,171\,168\,192 & 127\,381\,472\,196\,794\,982\,167\,904\,256 & 0.7353\\
30 & 1\,805\,490\,553\,059\,186\,252\,316\,964\,240 & 2\,474\,264\,427\,361\,959\,296\,151\,748\,608 & 0.7297\\
\bottomrule
\end{tabular}

\end{table}

\subsection{Types and Ferrers boards}\label{ssec:ferrers}
For a given type $\sigma$, the set of lengths allowed for vertex $k$ is a \emph{prefix} $[1,\ell_k]$. So the rainbow maps of type $\sigma$ are the ways to assign the $m$ lengths to the $m$ rows without repetition, row $k$ taking a length in $[1,\ell_k]$; they are the rook placements on a Ferrers board.
Sort the rows as $\ell_{(1)}\le\dots\le\ell_{(m)}$. If rooks are placed row by row in this order, all earlier rooks lie in the range of the current row, so
\begin{equation}\label{eq:ferrers}
  \#\{f\text{ of type }\sigma\}=\prod_{i=1}^{m}\bigl(\ell_{(i)}-i+1\bigr)
\end{equation}
(which is $0$ if some factor is $\le0$). A type is \emph{feasible} if this number is positive. The uniform measure conditioned on a feasible type is the uniform measure on rook placements of its board.

\begin{figure}[t]\centering
\begin{tikzpicture}[scale=0.62]
 \foreach \r/\c in {0/5,1/4,2/3,3/4,4/5} { \foreach \x in {1,...,\c} \fill[black!12] (\x-1,-\r) rectangle (\x,-\r-1); }
 \draw (0,0) grid (5,-5);
 \foreach \x in {1,...,5} \node[font=\scriptsize] at (\x-0.5,0.4) {\x};
 \node[font=\scriptsize] at (2.5,1.1) {length $d$};
 \foreach \r/\v/\c/\dir in {0/1/5/\RR,1/2/4/\RR,2/3/3/\LL,3/4/4/\LL,4/5/5/\LL} \node[font=\scriptsize,anchor=east] at (-0.2,-\r-0.5) {vertex \v\ ($\dir$, $\ell=\c$)};
 \foreach \r/\x in {0/3,1/4,2/1,3/2,4/5} \fill (\x-0.5,-\r-0.5) circle (0.16);
\end{tikzpicture}
\caption{The board of type $\RR\RR\LL\LL\LL$ for $N=6$: Row $k$ is vertex $k$, and its shaded cells are the allowed lengths $1,\dots,\ell_k$.
The dots show the rainbow map $(\delta_1,\dots,\delta_5)=(3,4,1,2,5)$. Sorting the rows by $\ell$ (vertices $3,2,4,1,5$) gives a Ferrers diagram;
row by row the numbers of choices are $3$, $4-1$, $4-2$, $5-3$, $5-4$, so this board carries $3\cdot3\cdot2\cdot2\cdot1=36$ rainbow maps.}\label{fig:ferrers}
\end{figure}

%======================================================================
\section{The structure of rainbow maps}\label{sec:struct}
This section regroups formula \eqref{eq:ferrers} by rank. A type then becomes a lattice path, and $W(N)$ becomes the total weight of a family of weighted Motzkin paths (Theorem~\ref{thm:motzkin}). The paths give exact enumerations of types (Section~\ref{ssec:typecount}) and are the starting point of the computation of $W(N)$ in Section~\ref{sec:perm}. Only Theorem~\ref{thm:motzkin} is used later (in Section~\ref{sec:perm}); the rest of this section, on exact enumerations, on a uniformly random rainbow map of a fixed type, and on a few further statistics, is not needed for the main theorem.

\subsection{Pairs and lattice paths}\label{ssec:pairs}
Let $R':=\lfloor(N-1)/2\rfloor$. For $1\le r\le R'$ we pair the two vertices $r$ and $N-r$ of rank $r$. Their bounds are $r$ or $N-r$: Vertex $r$ pointing left, or vertex $N-r$ pointing right, has the \emph{small bound} $r$; vertex $r$ pointing right, or vertex $N-r$ pointing left, has the \emph{large bound} $N-r$. In other words, an arc with the small bound points to the nearer end, and an arc with the large bound points to the farther end.
Let $s_r\in\{0,1,2\}$ be the number of vertices of pair $r$ with the small bound ($s_r=1$ arises in two ways). If $N$ is even there is also the midpoint $N/2$, whose bound is $N/2$ in both directions. Let
\[
  S_0:=0,\qquad S_r:=s_1+\dots+s_r\quad(1\le r\le R'),\qquad x_r:=r-S_{r-1}\quad(1\le r\le R'+1),
\]
so that $x_1=1$ and $x_{r+1}=x_r+1-s_r$; the values $s_r=0$, $1$, $2$ are an up step, a level step and a down step.

\begin{theorem}[Motzkin paths]\label{thm:motzkin}
The number of rainbow maps of a given type is
\[
  \prod_{r=1}^{R'}w_{s_r}(x_r)\times\begin{cases}1,&N\text{ odd},\\ x_{R'+1},&N\text{ even},\end{cases}\qquad
  w_0(x)=x(x+1),\quad w_1(x)=x^2,\quad w_2(x)=x(x-1).
\]
Hence $W(N)$ is the total weight of all paths of $R'$ steps starting at height $x=1$, where an up step from height $x$ has weight $x(x+1)$, a level step $2x^2$, and a down step $x(x-1)$, with an extra factor $2x_{R'+1}$ (twice the final height) when $N$ is even.
\end{theorem}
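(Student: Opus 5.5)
Starting from the Ferrers formula \eqref{eq:ferrers}, I would compute the product by placing rooks rank by rank, from the smallest bounds upward. The relevant bounds are the small bounds $r$ ($1\le r\le R'$), the midpoint bound $N/2$ if $N$ is even, and the large bounds $N-r$. Every large bound is at least $\lceil N/2\rceil$, which is at least every small bound and the midpoint bound. So the sorted order puts all small-bound rows first, ordered by $r$, then the midpoint, then the large-bound rows.

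\emph{Small-bound rows.} Suppose that before rank $r$ we have placed $S_{r-1}$ small-bound rows, all with bound below $r$. Each new row of rank $r$ with bound $r$ then has $r-S_{r-1}=x_r$ choices, and the second one has $x_r-1$. So the small-bound rows of rank $r$ contribute $1$, $x_r$ or $x_r(x_r-1)$ according as $s_r=0,1,2$.

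\emph{Large-bound rows.} I would process the large-bound rows in decreasing order of $r$, i.e.\ increasing bound $N-r$. Place them after all small rows (and the midpoint), and count the remaining choices. A large-bound row of rank $r$ has bound $N-r$. The rows already placed are all $S_{R'}$ small rows, the midpoint, and the large rows of ranks $>r$, which number $\sum_{r'>r}(2-s_{r'})$. With $T:=S_{R'}+[N\text{ even}]+\sum_{r'>r}(2-s_{r'})$, the first row of rank $r$ therefore has $N-r-T$ choices. Writing $2(R'-r)=\sum_{r'>r}2$ and $N-1=2R'+[N\text{ even}]$, this count simplifies to
\[
N-r-S_{R'}-[N\text{ even}]-2(R'-r)+\sum_{r'>r}s_{r'} = r+1-S_r = x_{r+1}.
\]
If $s_r=0$ the second large row has $x_{r+1}-1$ choices. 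In terms of $x:=x_r$: $s_r=0$ gives $x_{r+1}=x+1$ and contribution $(x+1)x$, and $s_r=1$ gives $x_{r+1}=x$ and contribution $x$. Combining with the small factors gives $w_0(x)=x(x+1)$, $w_1(x)=x\cdot x=x^2$, and $w_2(x)=x(x-1)\cdot1$.

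\emph{Midpoint and summation.} The midpoint row is placed after all small rows, so it has $N/2-S_{R'}=R'+1-S_{R'}=x_{R'+1}$ choices. Ties between equal bounds do not matter, since \eqref{eq:ferrers} holds for any sorted order. The same formula also covers infeasible types: the product vanishes exactly when some factor is $\le 0$, and the path factors become $0$ at height $0$ (or $x_r-1=0$ when $x_r=1$ with two small rows). To get $W(N)$, sum over types. The step $s_r=1$ arises from two types, giving weight $2x^2$, and the midpoint has two directions, giving $2x_{R'+1}$.

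The step I expect to be the main obstacle is the large-row bookkeeping, namely verifying the identity $N-r-T=x_{r+1}$. I would check it on the example of Figure~\ref{fig:ferrers}.
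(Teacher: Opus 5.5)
Your proposal is correct and follows the paper's proof essentially line by line. It uses the same sorted order (small bounds by increasing $r$, then the midpoint, then large bounds by decreasing $r$) and the same count of preceding rows, and your large-row factors $x_{r+1}$ and $x_{r+1}-1$ are exactly the paper's $x_r-s_r+2-j$. Your remark on infeasible types, that the first nonpositive factor is exactly $0$ because the height drops by at most one per step, makes explicit a point the paper leaves implicit.
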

\begin{proof}
Use \eqref{eq:ferrers}. All small bounds $1,\dots,R'$ are less than $N/2$ and all large bounds $N-R',\dots,N-1$ are greater than $N/2$. So the sorted order consists of the small bounds by increasing $r$, then the midpoint (if $N$ is even), then the large bounds by decreasing $r$.
The $s_r$ rows with small bound $r$ occupy positions $S_{r-1}+1,\dots,S_{r-1}+s_r$, with factors $r-(S_{r-1}+j)+1=x_r-j+1$ for $j=1,\dots,s_r$.
Before the $2-s_r$ rows with large bound $N-r$ come all $S_{R'}$ small-bound rows, the midpoint row ($[N\text{ even}]$ rows), and $\sum_{r'>r}(2-s_{r'})$ large-bound rows. Since $S_{R'}+\sum_{r'>r}(2-s_{r'})=S_r+2(R'-r)$ and $N-2R'-[N\text{ even}]=1$, the $j$th such row has factor
\[
  (N-r)-\bigl(S_r+2(R'-r)+[N\text{ even}]+j\bigr)+1=x_r-s_r+2-j\qquad(j=1,\dots,2-s_r).
\]
Collecting the factors of pair $r$ gives $x_r(x_r+1)$, $x_r\cdot x_r$, or $x_r(x_r-1)$ for $s_r=0,1,2$. The midpoint is in position $S_{R'}+1$, with factor $N/2-S_{R'}=x_{R'+1}$.
This proves the first assertion. Summing over types, the two choices for $s_r=1$ and the two directions of the midpoint each contribute a factor $2$.
\end{proof}
\noindent The height has a direct meaning: The quantity $x_r-1$ is the number of pairs among the first $r-1$ in which both arcs point to the farther end, minus the number of pairs in which both point to the nearer end. A path that reaches height $0$ has weight $0$, so only Motzkin prefixes (paths staying at height $\ge1$) contribute.
For example, $N=5$ gives $R'=2$, and $W(5)=2\cdot(6+8+2)+2\cdot(2+2+0)=40$: The first step is up (weight $2$) or level (weight $2$), and the brackets are the total weights of the second step from heights $2$ and $1$.

\subsection{Exact enumeration of types}\label{ssec:typecount}
\begin{corollary}[Feasible types]\label{cor:feasible}
There are exactly $\binom{N}{\lfloor N/2\rfloor}$ feasible types of order $N$.
\end{corollary}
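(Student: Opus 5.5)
The plan is to read feasibility off Theorem~\ref{thm:motzkin} and then count the resulting lattice paths by a bijection with simple $\pm1$ walks.

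\emph{Step 1: feasible types are the paths that stay at height $\ge1$.} By Theorem~\ref{thm:motzkin}, the number of rainbow maps of type $\sigma$ is $\prod_r w_{s_r}(x_r)$, times $x_{R'+1}$ if $N$ is even. The heights $x_r$ are integers with $x_1=1$. Each factor $w_{s_r}(x_r)$ vanishes exactly when $x_r=0$, or when $x_r=1$ and $s_r=2$. In the second case $x_{r+1}=x_r+1-s_r=0$. So, by induction along the path, the product is positive if and only if $x_r\ge1$ for all $1\le r\le R'+1$. For $N$ even the extra factor $x_{R'+1}$ is then also positive. Hence $\sigma$ is feasible iff its path $(x_r)$ never drops below height $1$.

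\emph{Step 2: from types to $\pm1$ walks.} A type is determined by the step sequence $(s_r)$, plus a choice of one of two realizations at each level step, plus the direction of the midpoint when $N$ is even. Encode pair $r$ by two $\pm1$ steps, one per vertex: $+1$ if that vertex has the large bound and $-1$ if it has the small bound. Then an up step is $(+1,+1)$, a down step is $(-1,-1)$, and the two realizations of a level step are $(+1,-1)$ and $(-1,+1)$. This is a bijection between the choices for pair $r$ and ordered pairs of signs. Start the walk at height $y_0=1$; after $2(r-1)$ steps its height is $y=2x_r-1$. The condition $x_r\ge1$ for all $r$ says the walk is $\ge1$ at all even times. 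Since heights at even times are odd, this is equivalent to the walk being $\ge0$ at all times. Indeed, if $y\ge1$ at even times then the intermediate odd-time heights are $\ge0$; conversely, a walk that is $\ge0$ throughout has odd, hence $\ge1$, height at even times.

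\emph{Step 3: count.} Prepend a forced up step from $0$ to $1$. Feasible types, ignoring the midpoint, then correspond bijectively to $\pm1$ walks of length $2R'+1$ from $0$ that stay $\ge0$. The first step is automatically up for such walks, so no constraint is lost. By the reflection principle there are $\binom{n}{\lfloor n/2\rfloor}$ nonnegative walks of length $n$. For $N$ odd, $2R'+1=N$, which gives $\binom{N}{\lfloor N/2\rfloor}$. For $N$ even, $2R'+1=N-1$, and the midpoint contributes a factor $2$, giving $2\binom{N-1}{N/2-1}=\binom{N}{N/2}$.

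The main point needing care is Step~2: checking that the sign encoding is really a bijection onto all walks, in particular that the two level-step realizations become the two mixed sign pairs, and that the parity argument turns ``height $\ge1$ at even times'' into ``nonnegative everywhere''. The final count is the standard ballot count.
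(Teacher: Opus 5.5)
Your proof is correct and follows the paper's own argument. Feasibility is read off Theorem~\ref{thm:motzkin} as $x_r\ge1$ for $1\le r\le R'+1$. Each vertex of pair $r$ contributes one step, $+1$ for the large bound and $-1$ for the small bound. A parity observation at even times turns the condition into nonnegativity of the walk after a forced initial up step, and the ballot count $\binom{n}{\lfloor n/2\rfloor}$ together with the midpoint factor $2$ finishes. The only cosmetic difference is that you start the walk at height $1$, so the even-time heights are $2x_r-1$, whereas the paper starts at $0$, uses the even partial sums $P_{2r}=2(x_{r+1}-1)$, and then prepends the $+1$ step.
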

\begin{proof}
By Theorem~\ref{thm:motzkin}, a type is feasible if and only if $x_r\ge1$ for $1\le r\le R'+1$ (for $s_r=2$, $x_r(x_r-1)>0$ is equivalent to $x_{r+1}=x_r-1\ge1$).
Write $+1$ for each vertex of pair $r$ with the large bound and $-1$ for each vertex with the small bound, and list these $2R'$ signs by increasing $r$, vertex $r$ before vertex $N-r$. The partial sums satisfy $P_{2r}=2r-2S_r=2(x_{r+1}-1)$, so feasibility means $P_{2r}\ge0$ for $1\le r\le R'$. Partial sums with even index are even, so this is equivalent to all partial sums being $\ge-1$, that is, to the walk prefixed by one step $+1$ staying $\ge0$.
There are $\binom p{\lfloor p/2\rfloor}$ walks of length $p$ from $0$ that stay $\ge0$, and they all begin with $+1$. Hence there are $\binom N{R'}$ feasible types when $N=2R'+1$, and $2\binom{N-1}{R'}=\binom N{N/2}$ when $N=2R'+2$, the factor $2$ coming from the two directions of the midpoint.
\end{proof}

\begin{corollary}[The largest type]\label{cor:maxtype}
The types in which every arc points to the farther end ($s_r\equiv0$, the midpoint in either direction) have the most rainbow maps: $R!\,(R+1)!$ if $N=2R+1$, and $(R!)^2$ if $N=2R$.
\end{corollary}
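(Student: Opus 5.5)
We use the weight formula of Theorem~\ref{thm:motzkin}: the number of rainbow maps of a type is $\prod_{r\le R'}w_{s_r}(x_r)$, times $x_{R'+1}$ when $N$ is even. Since $w_0(x)=x(x+1)$, $w_1(x)=x^2$ and $w_2(x)=x(x-1)$, the up step is the largest at every height: $w_0(x)\ge w_1(x)\ge w_2(x)$. So the plan is to compare an arbitrary feasible path with the all-up path and show that its weight can only be smaller. We then evaluate the all-up path and check that equality forces $s\equiv0$.

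\emph{Comparison.} The height satisfies $x_r=1+\#\{\text{up steps before }r\}-\#\{\text{down steps before }r\}\le r$. Equality $x_r=r$ for all $r$ holds exactly when every step is up. Bound each factor by its height:
\[
w_{s_r}(x_r)\le x_r(x_r+1)\le r(r+1),
\]
using that $x(x+1)$ is increasing for $x\ge1$ (infeasible types have weight $0$ and need no argument). The first inequality is strict unless $s_r=0$. When $N$ is even we also have $x_{R'+1}\le R'+1$. Multiplying the factors, every type has weight at most
\[
\prod_{r=1}^{R'}r(r+1)\cdot\begin{cases}1,&N\text{ odd},\\ R'+1,&N\text{ even}.\end{cases}
\]
This bound is attained exactly by $s\equiv0$; the direction of the midpoint does not affect the weight. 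Uniqueness follows because any $s_r\neq0$ makes its own factor strictly smaller, while all other factors stay at most their maxima.

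\emph{Evaluation.} We have $\prod_{r=1}^{R'}r(r+1)=R'!\,(R'+1)!$. If $N=2R+1$, then $R'=R$ and the weight is $R!\,(R+1)!$. If $N=2R$, then $R'=R-1$ and the weight is $(R-1)!\,R!\cdot R=(R!)^2$.

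There is no real obstacle here. The only point needing care is the monotonicity step: a lower height $x_r<r$ is never compensated later, since each later factor is bounded by its own maximum independently of the others. This holds because every bound $x_r\le r$ is used separately.
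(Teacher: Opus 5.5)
Your proof is correct, but you take a different route from the paper for the comparison step. The paper argues with boards rather than paths. The two possible bounds of vertex $k$ are $k$ and $N-k$, and the all-far type takes $\max(k,N-k)$ in every row. So its Ferrers board contains the board of every other type row by row. Rook placements are monotone under inclusion of boards, and Theorem~\ref{thm:motzkin} with $x_r=r$ is used only to evaluate the count.

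You instead compare the explicit products of Theorem~\ref{thm:motzkin} factor by factor, using $w_0\ge w_1\ge w_2$ and $x_r\le r$. The paper's inclusion argument is formula-free and never has to discuss heights or feasibility. Your computation is slightly longer but gives strictness at once. It therefore also shows that the types with $s\equiv0$ (with either midpoint direction) are the only maximizers, which the inclusion argument alone does not show.

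One fact you use only tacitly: a type of positive weight has $x_r\ge1$ for all $r\le R'+1$. This holds because a path can reach height $\le0$ only through a down step from height $1$, whose weight is $w_2(1)=0$. This is the remark following Theorem~\ref{thm:motzkin}. It is what justifies both $w_{s_r}(x_r)\le w_0(x_r)$ and the use of monotonicity of $x(x+1)$.
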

\begin{proof}
The two possible bounds of $k$ are $k$ and $N-k$, and this type takes the larger one; so its board contains, row by row, the board of any type, and the number of rook placements is monotone under inclusion. The value follows from Theorem~\ref{thm:motzkin} with $x_r=r$.
\end{proof}
\noindent For $N=6$ these types are $\RR\RR\LL\LL\LL$ and $\RR\RR\RR\LL\LL$, with $36=(3!)^2$ rainbow maps each; the first is the type of Figure~\ref{fig:ferrers}.

\begin{corollary}[Types with a single rainbow map]\label{cor:unique}
Exactly $2^R$ types have a single rainbow map. In each of these maps every interior vertex points directly to the ground ($f(k)\in\{0,N\}$).
\end{corollary}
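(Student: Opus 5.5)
We read the claim off Theorem~\ref{thm:motzkin}. A type has exactly one rainbow map if and only if its weight equals $1$. The weight is a product of factors $w_{s_r}(x_r)$ and, for even $N$, the factor $x_{R'+1}$; each factor is a nonnegative integer. So the weight is $1$ exactly when every factor equals $1$. First we determine which paths achieve this, then we count the types, and finally we identify the maps.

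\emph{The paths.} The path starts at $x_1=1$. An up step from height $x$ has weight $x(x+1)\ge2$, so it is never allowed. A down step from height $1$ has weight $0$. Hence the first step must be a level step, of weight $1^2=1$, and by induction the height stays at $1$. Every step is then level with weight $1$, and for even $N$ the final factor is $x_{R'+1}=1$. Conversely, the all-level path has weight $1$. So the types with a single rainbow map are exactly those with $s_r=1$ for every $r\le R'$, with the midpoint in either direction when $N$ is even.

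\emph{The count.} Each level step $s_r=1$ arises in two ways, since exactly one of the two vertices of pair $r$ has the small bound. The midpoint, when present, also has two directions. This gives $2^{R'}$ types for odd $N$ and $2^{R'+1}$ types for even $N$. In both cases the number is $2^R$, because $R=R'$ for odd $N$ and $R=R'+1$ for even $N$.

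\emph{The unique map.} In such a type, pair $r$ has one row with bound $r$ and one row with bound $N-r$; the midpoint row has bound $N/2$. Sort the rows as in \eqref{eq:ferrers}. The small-bound rows have bounds $1,2,\dots,R'$ in this order and occupy positions $1,\dots,R'$. The greedy placement forces each of these rows to take length exactly $r$, since all smaller lengths are already used. The midpoint row then takes the length $N/2$. The large-bound rows, taken in increasing order of bound $N-R',\dots,N-1$, are likewise forced to take their full bound as length.

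So every vertex $k$ has $\delta_k=\ell_k$. This means $f(k)=0$ when $\sigma_k=\LL$ and $f(k)=N$ when $\sigma_k=\RR$. The only step needing care is this last identification: we must check that the forced greedy choice is "length equals bound", which follows because each factor $\ell_{(i)}-i+1$ equals $1$, so $\ell_{(i)}=i$ in sorted order.
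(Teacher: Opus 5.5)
Your proof is correct and follows essentially the same route as the paper. Both arguments force every factor to equal $1$, hence every step to be level at height $1$. You obtain this by induction from $x_1=1$, whereas the paper notes that $w_0(x)\ge2$ and that $w_2(x)$, when positive, is at least $2$. Both then count $2^{R'}\cdot2^{[N\text{ even}]}=2^R$ types, and both observe that the sorted bounds are exactly $1,\dots,m$, which forces $\delta_k=\ell_k$ and hence $f(k)\in\{0,N\}$.
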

\begin{proof}
The factors are positive integers, so their product is $1$ only if each factor is $1$. This forces $s_r=1$ for every pair ($w_0(x)\ge2$; $w_2(x)>0$ needs $x\ge2$, and then $w_2(x)\ge2$; $w_1(x)=1$ needs $x=1$), so $x_r\equiv1$ and the midpoint factor is $1$.
There are $2$ choices per pair and, if $N$ is even, $2$ directions of the midpoint, giving $2^{R'}\cdot2^{[N\text{ even}]}=2^R$ types. The sorted bounds are then exactly $1,\dots,m$, and the only placement is $\delta_k=\ell_k$.
\end{proof}

\begin{corollary}[One arc to the left]\label{cor:oneleft}
Exactly $\lfloor2^N/3\rfloor$ rainbow maps of order $N$ have exactly one arc pointing left.
\end{corollary}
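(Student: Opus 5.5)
The plan is to count type by type, using the Ferrers-board formula \eqref{eq:ferrers} directly rather than the Motzkin weights. A rainbow map with exactly one arc pointing left has a type with $\sigma_k=\LL$ for a single vertex $k$ and $\sigma_j=\RR$ for all $j\ne k$. So the count is $\sum_{k=1}^{m}\#\{f\text{ of this type}\}$, and each summand is a product of the form \eqref{eq:ferrers}.

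First, determine the multiset of bounds for a fixed $k$. For $j\ne k$ we have $\ell_j=N-j$, and these values run over $\{1,\dots,m\}\setminus\{a\}$ with $a:=N-k$. The remaining row has $\ell_k=k$. So the sorted bound list is $1,\dots,m$ with the value $a$ deleted and the value $k$ inserted.

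Next, evaluate $\prod_i(\ell_{(i)}-i+1)$ in three cases.
\begin{itemize}
\item If $k<a$, i.e.\ $k<N/2$, the value $k$ appears twice. The entries below it sit in their own positions, so the two copies of $k$ occupy positions $k$ and $k+1$. The second copy gives the factor $k-(k+1)+1=0$, so the type is infeasible.
\item If $k=a$, i.e.\ $k=N/2$, the sorted list is exactly $1,\dots,m$. Every factor is $1$, and the count is $1$.
\item If $k>a$, the list is $1,\dots,a-1$ in positions $1,\dots,a-1$ (factor $1$ each), then $a+1,\dots,k$ in positions $a,\dots,k-1$ (factor $2$ each), then the inserted $k$ in position $k$ (factor $1$), then $k+1,\dots,m$ in their own positions (factor $1$). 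The count is $2^{k-a}=2^{2k-N}$.
\end{itemize}

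Finally, sum over $k$, giving $\sum_{N/2\le k\le N-1}2^{2k-N}$.
\begin{itemize}
\item For $N$ odd, the exponents are $1,3,\dots,N-2$. The sum is $2\cdot\frac{4^{(N-1)/2}-1}{3}=\frac{2^N-2}{3}$.
\item For $N$ even, the exponents are $0,2,\dots,N-2$. The sum is $\frac{2^N-1}{3}$.
\end{itemize}
In both cases this is $\lfloor 2^N/3\rfloor$, since $2^N\equiv 2$ or $1\pmod 3$ according as $N$ is odd or even.

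The only delicate step is the position bookkeeping in the case $k>a$: one has to check that the inserted value $k$ lands just after the shifted block. This is where I would double-check against $N=5$, where the formula gives $2+8=10=\lfloor 32/3\rfloor$.
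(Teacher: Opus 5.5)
Your proposal is correct and follows the paper's proof essentially step for step. Like the paper, you apply \eqref{eq:ferrers} to the bound multiset $\{1,\dots,m\}$ with $N-k$ replaced by $k$, and split into three cases: infeasible for $k<N/2$, one map for $k=N/2$, and $2^{2k-N}$ for $k>N/2$, coming from the factor-$2$ block in sorted positions $N-k,\dots,k-1$. You then sum the same geometric series to get $(2^N-2)/3$ or $(2^N-1)/3$.
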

\begin{proof}
Let $k$ be the vertex pointing left. The multiset of bounds is $\{1,\dots,m\}$ with $N-k$ replaced by $k$. If $k<N/2$, the bound $k$ occurs twice and $k+1$ bounds are $\le k$, so the type is infeasible. If $k=N/2$ the bounds are unchanged and there is one rainbow map.
If $k>N/2$, the sorted bounds are $1,\dots,N-k-1,\;N-k+1,\dots,k-1,\;k,\;k,\;k+1,\dots,N-1$; the rows in sorted positions $N-k,\dots,k-1$ have factor $2$ and the others factor $1$, giving $2^{2k-N}$.
Summing, $2+2^3+\dots+2^{N-2}=(2^N-2)/3$ for odd $N$ and $1+2^2+\dots+2^{N-2}=(2^N-1)/3$ for even $N$.
\end{proof}

\subsection{The distribution of one row}\label{ssec:marginal}
Fix a feasible board $\ell=(\ell_1,\dots,\ell_m)$ and take a uniform rook placement on it. For a row $r$ and an integer $x$ let
\[
  u_r(x):=x-\#\{s\ne r:\ell_s\le x\},
\]
the factor that row $r$ would receive in \eqref{eq:ferrers} if its bound were changed to $x$ and it were placed last among the rows with bound $x$. Formula \eqref{eq:ferrers} remains valid for any order of rows that is compatible with the order of the bounds, and it describes a sampling procedure: In such an order, each row chooses uniformly among its remaining lengths.

\begin{lemma}[Hazard rate]\label{lem:hazard}
Let $\ell_r=x+1$ and $u=u_r(x)$. Then $\Pr[\delta_r=x+1]=\frac1{u+1}$; and if $u>0$, then conditioned on $\delta_r\le x$ the placement is uniform on the board with $\ell_r$ changed to $x$.
\end{lemma}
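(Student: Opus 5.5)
We will use the sequential description of \eqref{eq:ferrers}, choosing an order of the rows in which row $r$ comes \emph{last} among all rows with bound $\le x+1$. Since $\ell_r=x+1$, the rows $s\ne r$ with $\ell_s\le x$ have strictly smaller bound, and the rows with $\ell_s=x+1$ have equal bound. So the order can put first all rows with $\ell_s\le x$, then the other rows with $\ell_s=x+1$, then $r$, then the rows with larger bound. This order is compatible with the order of the bounds, so the count and the sampling procedure apply: each row chooses uniformly among its remaining lengths.

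\textbf{The probability.} When $r$ is placed, every earlier row has used a length in $[1,x+1]$. There are $\#\{s\ne r:\ell_s\le x+1\}$ such rows. This number equals $\#\{s\ne r:\ell_s\le x\}+\#\{s\ne r:\ell_s=x+1\}$. Hence the number of lengths available to $r$ is
\[
x+1-\#\{s\ne r:\ell_s\le x+1\}=u+1-e,\qquad e:=\#\{s\ne r:\ell_s=x+1\}.
\]
The difficulty is that we must show $\Pr[\delta_r=x+1]=1/(u+1)$, not $1/(u+1-e)$. The cleaner route is therefore a direct counting argument rather than this particular order.

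\textbf{Direct count.} Let $B$ be the board and $B'$ the board with $\ell_r$ changed to $x$. Placements of $B$ with $\delta_r\le x$ are exactly the placements of $B'$. Placements of $B$ with $\delta_r=x+1$ correspond bijectively to placements of the remaining rows on lengths $[1,m]\setminus\{x+1\}$. Now compute both counts with \eqref{eq:ferrers}, putting row $r$ last among the rows of bound $\le x$ in $B'$, and last among the rows of bound $\le x+1$ in $B$; the factors of all other rows can be arranged to agree. In $B'$ the factor of $r$ is $x-\#\{s\ne r:\ell_s\le x\}=u$. In $B$, insert $r$ instead first among the rows of bound $x+1$. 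This is still a compatible order, and there $r$ gets factor $u+1$, while the rows of bound $\ge x+1$ after it see one more used length in both boards alike. So
\[
|B|=\tfrac{u+1}{u}\,|B'|\quad(u>0),\qquad\text{i.e.}\qquad \#\{\delta_r\le x\}=|B'|=\tfrac u{u+1}|B|.
\]
This gives $\Pr[\delta_r=x+1]=1/(u+1)$. We must still check that the two products differ only in the factor of row $r$. Every other row has the same bound and the same number of predecessors in both orders, because $r$ precedes exactly the rows of bound $\ge x+1$ in both. When $u=0$ we have $|B'|=0$ and the probability is $1$, which again equals $1/(u+1)$.

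\textbf{Conditioning.} Given $\delta_r\le x$, the placement is a placement of $B'$. Conditioning a uniform measure on a subset gives the uniform measure on that subset, and the subset is all of $B'$. Hence the conditional law is uniform on $B'$. The main obstacle is the bookkeeping in the direct count: verifying that moving $r$ between the two compatible orders changes no other factor.
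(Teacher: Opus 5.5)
Your final argument is correct and is essentially the paper's proof. Both use the compatible order in which $r$ comes immediately after all other rows of bound $\le x$, where $r$ sees $u+1$ free lengths: the $u$ remaining ones in $[1,x]$ plus $x+1$. The paper reads $\Pr[\delta_r=x+1]=1/(u+1)$ directly off the sequential sampling, while you get the same fact as $|B'|/|B|=u/(u+1)$ by writing the product formula for both boards in that common order. The abandoned first paragraph, and the initial suggestion to put $r$ last among rows of bound $\le x+1$ in $B$, can simply be deleted.
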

\begin{proof}
Place first the other rows with bound $\le x$, then row $r$, then the remaining rows; this order is compatible with the bounds. When row $r$ is placed, the earlier rows occupy $\#\{s\ne r:\ell_s\le x\}$ lengths, all $\le x$, so row $r$ chooses uniformly among the $u$ remaining lengths in $[1,x]$ and the length $x+1$, whatever the earlier choices were. The placements with $\delta_r\le x$ are exactly the placements of the modified board.
\end{proof}

\begin{theorem}[One-row marginal]\label{thm:marginal}
For every row $r$ and every integer $a$ with $0\le a\le\ell_r$, $\displaystyle\ \Pr[\delta_r\le a]=\prod_{x=a}^{\ell_r-1}\frac{u_r(x)}{u_r(x)+1}$.
In particular, $\Pr[\delta_r=d]$ is constant for $d$ in every interval $(a,b]\subseteq(0,\ell_r]$ that contains no bound of another row.
\end{theorem}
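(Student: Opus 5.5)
The plan is a downward induction on $a$, from $a=\ell_r$ to $a=0$, with Lemma~\ref{lem:hazard} applied at each step. For $a=\ell_r$ the product is empty and $\Pr[\delta_r\le\ell_r]=1$, so the base case is trivial. For the inductive step I would prove the slightly stronger statement that for every feasible board and every row $r$ with bound $\ell_r$,
\[
  \Pr[\delta_r\le \ell_r-1]=\frac{u_r(\ell_r-1)}{u_r(\ell_r-1)+1},
\]
and that, when this probability is positive, the conditional law is uniform on the board with $\ell_r$ lowered to $\ell_r-1$. This is exactly Lemma~\ref{lem:hazard} with $x=\ell_r-1$. The ratio $u/(u+1)$ is $1-\frac1{u+1}$, the complement of the probability of using the top length.

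Next I would iterate. Conditioned on $\delta_r\le \ell_r-1$, the placement is uniform on the modified board $\ell'$, with $\ell'_r=\ell_r-1$ and the other bounds unchanged. The key observation is that $u_r(x)=x-\#\{s\ne r:\ell_s\le x\}$ does not depend on $\ell_r$, so the quantities $u_r(x)$ are the same for $\ell$ and $\ell'$. Applying the inductive statement to $\ell'$ and chaining conditional probabilities gives
\[
  \Pr[\delta_r\le a]=\prod_{x=a}^{\ell_r-1}\frac{u_r(x)}{u_r(x)+1}.
\]
The main delicate point is feasibility of the modified board, which Lemma~\ref{lem:hazard} needs. If some $u_r(x)=0$ is reached, the product is $0$ from that point on. In that case the true probability is also $0$, since $\Pr[\delta_r\le x]=0$ once $\Pr[\delta_r\le x\mid \delta_r\le x+1]=0$. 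So I would stop the induction there: all smaller $a$ give $0$ on both sides, by monotonicity of $\Pr[\delta_r\le a]$ in $a$. If $u>0$, the modified board has a placement, because the original placement count times $u/(u+1)$ is positive. One should also check that $u_r(x)\ge0$ always, which holds by feasibility via the ordering argument in the proof of the lemma.

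For the consequence, write $\Pr[\delta_r=d]=\Pr[\delta_r\le d]-\Pr[\delta_r\le d-1]$, which equals $\Pr[\delta_r\le d]\cdot\frac{1}{u_r(d-1)+1}$. If $(a,b]$ contains no bound of another row, then for $x\in[a,b-1]$ the count $\#\{s\ne r:\ell_s\le x\}$ is a constant $C$, so $u_r(x)=x-C$. Then
\[
  \Pr[\delta_r=d]=\frac{1}{d-1-C+1}\prod_{x=d}^{\ell_r-1}\frac{u_r(x)}{u_r(x)+1}.
\]
The product telescopes within the interval: $\prod_{x=d}^{b-1}\frac{x-C}{x-C+1}=\frac{d-C}{b-C}$. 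So $\Pr[\delta_r=d]=\frac{1}{b-C}\cdot\prod_{x=b}^{\ell_r-1}\frac{u_r(x)}{u_r(x)+1}$, which is independent of $d$. If some factor vanishes, both sides are $0$ and constancy is trivial.
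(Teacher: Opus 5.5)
Your proposal is correct and follows the paper's proof: it chains the conditional probabilities $\Pr[\delta_r\le x\mid\delta_r\le x+1]$, applies Lemma~\ref{lem:hazard} to the successively lowered boards (using that $u_r$ does not depend on $\ell_r$), and telescopes $u_r(x)=x-C$ on the gap interval. The differences are cosmetic: the paper handles a vanishing $u_r(x)$ up front, by noting that the rows with bound $\le x$ then fill $[1,x]$, and it shows that $\Pr[\delta_r\le x]$ is linear in $x$ on $[a,b]$ instead of computing $\Pr[\delta_r=d]$ directly.
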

\begin{proof}
If $u_r(x)=0$ for some $x$ with $a\le x<\ell_r$, the rows $s\ne r$ with $\ell_s\le x$ use all the lengths $1,\dots,x$, so $\Pr[\delta_r\le a]=0$ and both sides vanish. Otherwise write $\Pr[\delta_r\le a]=\prod_{x=a}^{\ell_r-1}\Pr[\delta_r\le x\mid\delta_r\le x+1]$, where every event $\delta_r\le x+1$ has positive probability by induction on $\ell_r-x$, and apply Lemma~\ref{lem:hazard} to each factor; by the second assertion of the lemma, the law given $\delta_r\le x+1$ is uniform on the board with $\ell_r$ replaced by $x+1$ and the other bounds unchanged. If no other row has its bound in $(a,b]$, then $u_r(x)$ increases by one at each step on $[a,b]$, the product telescopes, and $\Pr[\delta_r\le x]=\Pr[\delta_r\le b]\,u_r(x)/u_r(b)$ for $a\le x\le b$.
\end{proof}

\begin{corollary}[One row of the largest type]\label{cor:pps}
On the type of Corollary~\ref{cor:maxtype}, let $k$ have rank $\rho$ and recall $R=\lfloor N/2\rfloor$. Then
\[
  \Pr[\delta_k\le d]=\frac{\rho}{N-d},\qquad\Pr[\delta_k=d]=\frac{\rho}{(N-d)(N-d+1)}\qquad(R<d\le N-\rho),
\]
and on an initial range of lengths the probabilities are constant: $\Pr[\delta_k=d]=\frac{\rho}{R(R+1)}$ for $d\le R+1$ if $N=2R+1$, and $\Pr[\delta_k=d]=\frac{\rho}{R^2}$ for $d\le R$ if $N=2R$ and $k\ne R$. If $N=2R$, the midpoint has $\delta_R$ uniform on $\{1,\dots,R\}$.
\end{corollary}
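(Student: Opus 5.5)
The plan is to apply Theorem~\ref{thm:marginal} directly. On the largest type every vertex takes its larger bound. Vertex $k$ of rank $\rho$ therefore has $\ell_k=N-\rho$. The multiset of bounds is $\{R,\dots,N-1\}$ with every value taken twice when $N=2R+1$. When $N=2R$, the value $R$ is taken once (by the midpoint) and each of $R+1,\dots,N-1$ twice. The whole proof reduces to computing $u_k(x)=x-\#\{s\ne k:\ell_s\le x\}$ for $x$ in the relevant ranges and telescoping the product.

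\emph{Odd $N=2R+1$.} Each value $v\in\{R+1,\dots,2R\}$ is the bound of exactly two vertices, of ranks $N-v$. Take $R<x<N-\rho=\ell_k$, so $x\ge R+1$. The other rows with bound $\le x$ are all rows with bounds in $[R+1,x]$, and $k$ is not among them because $\ell_k>x$. That gives $2(x-R)$ rows, so
\[
u_k(x)=x-2(x-R)=N-1-x .
\]
The product $\prod_{x=d}^{N-\rho-1}\frac{N-1-x}{N-x}$ telescopes to $\frac{\rho}{N-d}$ for $R\le d\le N-\rho$. Differencing gives $\Pr[\delta_k=d]=\frac{\rho}{N-d}-\frac{\rho}{N-d+1}=\frac{\rho}{(N-d)(N-d+1)}$ for $R<d\le N-\rho$. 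At $d=R$ this yields $\Pr[\delta_k\le R]=\rho/(R+1)$, and also $\Pr[\delta_k\le R+1]=\rho/R$. No other bound lies in $(0,R]$, so by the second assertion of Theorem~\ref{thm:marginal} the probabilities are constant on $[1,R+1]$, and their common value is $\frac{\rho}{R(R+1)}$.

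\emph{Even $N=2R$, $k\ne R$.} Take $R\le x<N-\rho$. The other rows with bound $\le x$ are the midpoint and the two rows of each bound in $[R+1,x]$. This gives $u_k(x)=x-1-2(x-R)=N-1-x$ again, hence the same formula $\Pr[\delta_k\le d]=\rho/(N-d)$ for $R\le d\le N-\rho$. In particular $\Pr[\delta_k\le R]=\rho/R$. The only other bound in $(0,R]$ is the midpoint's bound $R$, so $(0,R]$ still contains no bound of another row beyond its endpoint. Hence the probabilities are constant on $[1,R]$ and equal $\rho/R^2$.

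\emph{The midpoint.} It has $\ell=R$, the smallest bound, and no other row has bound $\le R-1$. So $u(x)=x$ for $x<R$, the product telescopes to $d/R$, and $\delta_R$ is uniform on $\{1,\dots,R\}$.

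The main obstacle is bookkeeping rather than any conceptual difficulty. The points to check are the boundary cases: whether $x=R$ is included, the exclusion of $k$ itself from the count, and the constancy interval. For the last, Theorem~\ref{thm:marginal} needs $(a,b]$ to contain no other bound. In the odd case the interval $(0,R+1]$ contains the bound $R+1$ held by other rows. This is handled by constancy on $(0,R]$ together with the direct check that $\Pr[\delta_k=R+1]=\rho/R-\rho/(R+1)=\frac{\rho}{R(R+1)}$ coincides with the constant value.
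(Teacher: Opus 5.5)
Your proposal is correct and is essentially the paper's proof: read off $u_k(x)$ from the multiset of bounds and telescope the product of Theorem~\ref{thm:marginal}. Two bookkeeping slips in the odd case: the multiset is $\{R+1,\dots,N-1\}$ twice each, as in your second paragraph, not $\{R,\dots,N-1\}$. Also, starting the telescoping at $d=R$ needs $u_k(R)=R=N-1-R$, which you did not check.

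The one step to tighten is the even case. There $(0,R]$ does contain another row's bound, namely the midpoint's $R$, so the constancy clause of Theorem~\ref{thm:marginal} does not apply as stated. Either check $\Pr[\delta_k=R]=\Pr[\delta_k\le R]/\bigl(u_k(R-1)+1\bigr)=\rho/R^2$ directly, exactly as you did for $R+1$ in the odd case. Or compute $\Pr[\delta_k\le a]=\frac{\rho}{R}\prod_{x=a}^{R-1}\frac{x}{x+1}=\frac{\rho a}{R^2}$ for $a\le R$, which is what the paper does.
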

\begin{proof}
Let $N=2R+1$. Each bound $v\in\{R+1,\dots,2R\}$ occurs twice (vertices $v$ and $N-v$). Row $k$ has bound $\ell_k=N-\rho$, so $u_k(x)=x$ for $x\le R$ and $u_k(x)=x-2(x-R)=2R-x$ for $R+1\le x\le \ell_k-1$.
By Theorem~\ref{thm:marginal}, $\Pr[\delta_k\le a]=\prod_{x=a}^{\ell_k-1}\frac{2R-x}{2R-x+1}=\frac{2R-\ell_k+1}{2R-a+1}=\frac\rho{N-a}$ for $R+1\le a\le \ell_k$, and $\Pr[\delta_k\le a]=\frac{\rho}{R+1}\cdot\frac aR$ for $a\le R$.
For $N=2R$ and $k\ne R$, the bounds $R+1,\dots,2R-1$ occur twice and $R$ once (the midpoint), so $u_k(x)=x$ for $x<R$, $u_k(R)=R-1$, and $u_k(x)=2R-1-x$ for $R<x<\ell_k$. Hence $\Pr[\delta_k\le a]=\frac{\rho}{N-a}$ for $R+1\le a\le\ell_k$, then $\Pr[\delta_k\le R]=\frac\rho{R-1}\cdot\frac{R-1}R=\frac\rho R$, and $\Pr[\delta_k\le a]=\frac{\rho a}{R^2}$ for $a\le R$. All bounds other than the midpoint's exceed $R$, so $u_R(x)=x$ and $\delta_R$ is uniform.
\end{proof}
\noindent For example, on the board of Figure~\ref{fig:ferrers} the length $\delta_1$ takes the values $1,\dots,5$ with probabilities $\frac19,\frac19,\frac19,\frac16,\frac12$.

\subsection{Negative dependence}\label{ssec:negdep}
\begin{theorem}[Negative dependence]\label{thm:negdep}
Take a uniform rook placement on a feasible board. Let $I$, $J$ be disjoint sets of rows and fix an integer vector $a_I$ (all thresholds below are integers, and inequalities between vectors are coordinatewise). Then $\Pr[\delta_I\le a_I\mid\delta_J\le y_J]$ is nondecreasing in each $y_j$ (wherever the conditional probability is defined). Consequently
\[
  \Pr[\delta_I\le a_I,\ \delta_J\le b_J]\ \le\ \Pr[\delta_I\le a_I]\,\Pr[\delta_J\le b_J],
\]
and $\mathrm{Cov}\bigl(\phi_1(\delta_i),\phi_2(\delta_j)\bigr)\le0$ for any two rows $i\ne j$ and any nondecreasing functions $\phi_1$, $\phi_2$.
\end{theorem}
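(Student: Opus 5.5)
The key observation is that conditioning on $\delta_J\le y_J$ keeps us inside the class of uniform rook placements on Ferrers boards. The event $\{\delta_J\le y_J\}$ is the set of placements on the board $\ell'$ with $\ell'_j=\min(\ell_j,y_j)$ for $j\in J$ and all other bounds unchanged. So the conditional law is uniform on a smaller board, and $\Pr[\delta_I\le a_I\mid\delta_J\le y_J]=\Pr_{\ell'}[\delta_I\le a_I]$. It therefore suffices to prove the following one-row monotonicity statement. For a feasible board $\ell$, a row $j\notin I$, and the board $\ell^-$ obtained by lowering $\ell_j$ by one (assumed feasible), we want $\Pr_{\ell^-}[\delta_I\le a_I]\le\Pr_\ell[\delta_I\le a_I]$. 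Iterating this (with $y_j\ge\ell_j$ changing nothing) gives monotonicity in each $y_j$.

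For the one-step claim, write $x=\ell_j-1$. Lemma~\ref{lem:hazard} says that under $\ell$, with probability $1/(u+1)$ row $j$ takes length $x+1$, and with the complementary probability the placement is uniform on $\ell^-$. Hence
\[
\Pr_\ell[E]=\tfrac{u}{u+1}\Pr_{\ell^-}[E]+\tfrac1{u+1}\Pr_\ell[E\mid\delta_j=x+1],
\]
and it suffices to show $\Pr_\ell[E\mid\delta_j=x+1]\ge\Pr_{\ell^-}[E]$ for $E=\{\delta_I\le a_I\}$.

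Conditioned on $\delta_j=x+1$, the other rows form a uniform placement on the board with row $j$ deleted and length $x+1$ removed. Relabel the lengths $>x+1$ downward by one. Each other row $s$ then gets a new bound: $\ell_s$ if $\ell_s\le x$, and $\ell_s-1$ if $\ell_s\ge x+1$. Events $\delta_i\le a_i$ become $\delta_i\le a_i$ or $\delta_i\le a_i-1$ depending on whether $a_i\le x$.

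Under $\ell^-$, by contrast, all $m$ lengths remain and row $j$ has bound $x$. We compare the two by a coupling or an injection. I would try to prove the inequality by induction on the number of rows, again peeling rows with Lemma~\ref{lem:hazard} in a bound-compatible order.

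An alternative, likely cleaner, is to use the explicit product formula \eqref{eq:ferrers}. Then $\Pr[\delta_i\le a_i\ \forall i\in I]$ is the ratio of the placement counts of the truncated board and the full board, each a product of factors $\ell_{(k)}-k+1$. Lowering $\ell_j$ by one changes exactly one factor in the sorted product: the one at the last position with value $\ell_j$ drops by one. This gives a ratio of the form $\frac{\ell_j-p}{\ell_j-p+1}$, where $p$ is the number of rows $s\ne j$ with bound $\le\ell_j$ taken in the appropriate order.

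So the claim reduces to one comparison. The relative loss from lowering $\ell_j$ must be at least as large on the full board as on the truncated board. Equivalently, $\frac{u_{\text{trunc}}}{u_{\text{trunc}}+1}\ge\frac{u}{u+1}$, where $u$ is the hazard quantity $x-\#\{s\ne j:\ell_s\le x\}$, and $u_{\text{trunc}}$ is the same quantity on the board with the rows of $I$ truncated to $\min(\ell_i,a_i)$. Truncating rows of $I$ can only increase $\#\{s\ne j:\ell_s\le x\}$, so $u_{\text{trunc}}\le u$.

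That appears to go the wrong way, and this sign check is the main obstacle. I expect it is resolved by keeping the truncated board's count of placements with $\delta_j=x+1$ honest: a zero factor or infeasibility of the truncated board must be handled separately, since then both sides vanish. The correct direction should come from the fact that more truncation forces row $j$ toward its top length.

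Concretely, I would verify the direction on the $N=6$ example first, then fix the formula $\Pr_{\ell^-}[E]/\Pr_\ell[E]=\frac{u_{\text{trunc}}(u+1)}{(u_{\text{trunc}}+1)u}\le1$, which holds precisely because $u_{\text{trunc}}\le u$.

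Finally, the consequences. The product inequality follows by taking $y_J=b_J$ and then $y_J\to\infty$: monotonicity gives $\Pr[\delta_I\le a_I\mid\delta_J\le b_J]\le\Pr[\delta_I\le a_I]$. For the covariance statement, take $I=\{i\}$ and $J=\{j\}$. The inequality says the indicators $[\delta_i>a]$ and $[\delta_j>b]$ are nonpositively correlated, since lower-set events for one row and lower-set events for the other are negatively correlated, which is equivalent for upper sets. Writing nondecreasing $\phi_1,\phi_2$ as nonnegative combinations of such indicators plus constants gives $\mathrm{Cov}\le0$ by bilinearity.
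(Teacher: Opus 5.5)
Your closing argument is correct and takes a different route from the paper, but the middle of your write-up states the goal backwards. Let $E=\{\delta_I\le a_I\}$, let $\mathcal N(\cdot)$ be the number of placements on a board, and let $\ell^{\mathrm{tr}}$ be the board with the rows of $I$ cut at $a_I$. The claim $\Pr_{\ell^-}[E]\le\Pr_\ell[E]$ then reads $\mathcal N((\ell^{\mathrm{tr}})^-)/\mathcal N(\ell^{\mathrm{tr}})\le\mathcal N(\ell^-)/\mathcal N(\ell)$. By Lemma~\ref{lem:hazard} applied to row $j$ on both boards, this is $\frac{u_{\mathrm{trunc}}}{u_{\mathrm{trunc}}+1}\le\frac u{u+1}$: the relative loss must be at least as large on the \emph{truncated} board. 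You wrote this equivalence with $\ge$, which is why $u_{\mathrm{trunc}}\le u$ looked like the wrong direction. It is exactly the needed inequality, and your final formula $\Pr_{\ell^-}[E]/\Pr_\ell[E]=\frac{u_{\mathrm{trunc}}(u+1)}{(u_{\mathrm{trunc}}+1)u}\le1$ is the whole proof.

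Several parts can be dropped. The speculative remedy (zero factors, row $j$ forced toward its top length) is unnecessary. The unfinished mixture approach is superseded. Your description of which sorted factor changes (the ``last position'' and $\frac{\ell_j-p}{\ell_j-p+1}$) is imprecise; it is simply $\mathcal N(\ell^-)=\frac u{u+1}\mathcal N(\ell)$ with $u=u_j(\ell_j-1)$. The only degenerate case is $\mathcal N(\ell^{\mathrm{tr}})=0$, where both probabilities vanish, and $u>0$ holds because $\ell^-$ is feasible.

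The paper applies Theorem~\ref{thm:marginal} to the \emph{measured} rows instead. For $I=\{i\}$, the conditional probability is a product of factors $u/(u+1)$, each nondecreasing in $y_J$. Larger $I$ is handled by a chain rule, with care that the intermediate conditioning events keep positive probability.

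You apply the hazard lemma to the \emph{conditioning} row $j$, which by Bayes amounts to comparing $\Pr[\delta_j\le x\mid E]$ with $\Pr[\delta_j\le x]$, each a single hazard factor. This treats all of $I$ at once, and in fact any event that cuts rows other than $j$. It needs no chain rule and no positivity bookkeeping, and it is essentially the hazard-factor comparison the paper itself uses in the strictness remark after the theorem. The paper's route gives in exchange an explicit product formula for the conditional law of one row.

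Your derivation of the covariance bound, via nonnegative combinations of indicators $[\delta>a]$, is Hoeffding's identity in another form.
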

\begin{proof}
Conditioning on $\delta_J\le y_J$ gives the uniform measure on the board with $\ell_j$ replaced by $\min(y_j,\ell_j)$ ($j\in J$), again a Ferrers board. For $I=\{i\}$, Theorem~\ref{thm:marginal} expresses the conditional probability as a product of factors $\frac u{u+1}$ with
$u=x-\#\{s\notin J,\,s\ne i:\ell_s\le x\}-\#\{j\in J:\min(y_j,\ell_j)\le x\}$; each $u$ is nondecreasing in each $y_j$, hence so is the product. For larger $I$ use the chain rule: The $p$th factor is the one-row case on the board in which $J$ is cut at $y_J$ and $i_1,\dots,i_{p-1}$ are cut at $a$. When comparing $y_J\le y'_J$ we may assume that the conditional probability at $y_J$ is positive (otherwise there is nothing to prove); then every intermediate conditioning event has positive probability at $y_J$, hence also at $y'_J$, and the one-row case applies factor by factor.
Feasibility is preserved when $y$ increases (Hall's condition becomes weaker), so the domain is an up-set. The inequality compares $y_J=b_J$ with $y_J=\ell_J$. For two rows this is negative quadrant dependence, and Hoeffding's identity
$\mathrm{Cov}(\phi_1(X),\phi_2(Y))=\sum_{a,b}\Delta\phi_1(a)\Delta\phi_2(b)\bigl[\Pr(X\le a,Y\le b)-\Pr(X\le a)\Pr(Y\le b)\bigr]$ with $\Delta\phi(a):=\phi(a+1)-\phi(a)$, gives the last assertion.
\end{proof}
\noindent Equality can also be described. Call $t$ a \emph{full column} if $\#\{s:\ell_s\le t\}=t$; full columns cut the board into independent blocks. The covariance of $\delta_i$ and $\delta_j$ vanishes if and only if one of the two rows is constant or some full column $t$ satisfies $\min(\ell_i,\ell_j)\le t<\max(\ell_i,\ell_j)$; otherwise it is strictly negative. (For strictness, let $\ell_i\le\ell_j$, $a:=\ell_i-1$ and $U:=u_i(a)$; then $U\ge1$ since row $i$ is not constant. By Lemma~\ref{lem:hazard}, $\Pr[\delta_i\le a]=\frac U{U+1}$, while on the board with $\ell_j$ cut to $a$ the same lemma gives $\Pr[\delta_i\le a\mid\delta_j\le a]=\frac{U-1}U$. Here $\Pr[\delta_j\le a]>0$, since otherwise there would be a full column $t$ with $\ell_i-1\le t<\ell_j$, and $t=\ell_i-1$ would make row $i$ constant. So the term $(a,a)$ of Hoeffding's sum is strictly negative, and the other terms are $\le0$.)

For the uniform measure on all rainbow maps, a mixture over types, the quadrant form of Theorem~\ref{thm:negdep} fails: For $N=6$, $\Pr[\delta_1\le3,\delta_3\le2]=\frac16>\Pr[\delta_1\le3]\Pr[\delta_3\le2]=\frac{143}{882}$. Exact computation shows $\mathrm{Cov}(\delta_i,\delta_j)<0$ for every pair $i<j$ when $N\le13$; we have no proof in general.

\subsection{The number of arcs pointing left}\label{ssec:left}
\begin{theorem}\label{thm:left}
Choose $f$ uniformly among the $W(N)$ rainbow maps of order $N$; let $L(f)$ be the number of arcs pointing left and $\kappa$ the number of level steps of its path (Section~\ref{ssec:pairs}), so that paths are weighted as in Theorem~\ref{thm:motzkin}. Given the path $(s_1,\dots,s_{R'})$,
\[
  L=(R'-\kappa)+2B+[N\text{ even}]\,B',\qquad B\sim\mathrm{Bin}(\kappa,\tfrac12),\quad B'\sim\mathrm{Bernoulli}(\tfrac12),\quad\text{independent}.
\]
Hence $\Ex L=\frac{N-1}2$ and $\mathrm{Var}\,L=\Ex\kappa+\frac14[N\text{ even}]$.
\end{theorem}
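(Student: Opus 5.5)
The plan is to read the conditional law of $L$ off the proof of Theorem~\ref{thm:motzkin}. There the number of rainbow maps of a type depends only on the path $(s_1,\dots,s_{R'})$, and not on which vertex of a level-step pair has the small bound or on the direction of the midpoint. So the uniform measure on rainbow maps, pushed forward to types, does the following: it first chooses the path with probability proportional to its weight (the weights of Theorem~\ref{thm:motzkin}, including the factors $2$). Given the path, it then chooses independently and uniformly one of the two arrangements in each level pair, and one of the two midpoint directions when $N$ is even. To justify this, I will note that the type is determined by the path together with these binary choices, and that all types with the same path carry the same number of maps, namely $\prod_r w_{s_r}(x_r)$ times the midpoint factor.

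Next I will count left arcs pair by pair. In pair $r$, vertex $r$ points left exactly when it has the small bound, and vertex $N-r$ points left exactly when it has the large bound.
\begin{itemize}
\item If $s_r=0$, both vertices have the large bound, so only $N-r$ points left: one left arc.
\item If $s_r=2$, both have the small bound, so only $r$ points left: one left arc.
\item If $s_r=1$, there are two arrangements. Either $r$ has the small bound and $N-r$ the large one, giving two left arcs, or the reverse, giving zero.
\end{itemize}
The $R'-\kappa$ up and down steps therefore contribute $R'-\kappa$ left arcs. Each level step contributes $2\cdot\mathrm{Bernoulli}(\frac12)$, and these add up to $2B$ with $B\sim\mathrm{Bin}(\kappa,\frac12)$. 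When $N$ is even, the midpoint adds an independent $B'\sim\mathrm{Bernoulli}(\frac12)$. This gives the stated representation.

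For the moments, I will first compute the conditional mean:
\[
\Ex[L\mid\text{path}]=R'-\kappa+\kappa+\tfrac12[N\text{ even}]=R'+\tfrac12[N\text{ even}]=\tfrac{N-1}{2},
\]
which is the same for every path. Hence $\Ex L=\frac{N-1}2$, and the law of total variance reduces to the expected conditional variance:
\[
\mathrm{Var}\,L=\Ex\bigl[4\cdot\tfrac{\kappa}{4}+\tfrac14[N\text{ even}]\bigr]=\Ex\kappa+\tfrac14[N\text{ even}].
\]
The only step needing care is the first one, namely that conditioning on the path makes the level-pair and midpoint choices independent and uniform. This follows directly from the type-by-type count in Theorem~\ref{thm:motzkin}.
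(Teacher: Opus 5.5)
Your proof is correct and follows the paper's argument. By Theorem~\ref{thm:motzkin}, all types sharing a path carry the same number of rainbow maps, so given the path the level-pair arrangements and the midpoint direction are independent fair coins. Counting left arcs pair by pair then gives the representation of $L$, and the law of total variance, with constant conditional mean $\frac{N-1}2$, gives the moments.
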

\begin{proof}
A pair with $s_r=0$ is $(r,\RR),(N-r,\LL)$ and a pair with $s_r=2$ is $(r,\LL),(N-r,\RR)$; each has one arc pointing left. A level pair is $(r,\LL),(N-r,\LL)$ or $(r,\RR),(N-r,\RR)$, with two or zero arcs pointing left.
Given the path, all types have the same multiset of bounds and hence, by Theorem~\ref{thm:motzkin}, the same number of rainbow maps; so the choices in the level pairs and at the midpoint are independent fair coins. Since $\Ex[L\mid\text{path}]=\frac{N-1}2$ is constant, the variance follows from the law of total variance.
\end{proof}

\begin{corollary}\label{cor:leftgf}
For $N\ge2$, $\sum_fz^{L(f)}$ is a palindromic polynomial of degree $N-1$ that vanishes at $z=-1$. The number $A_2(N)$ of rainbow maps with exactly two arcs pointing left is given by
\[
  90\,A_2(N)=\begin{cases}3^{N+2}-16\cdot2^N-15N+20,&N\text{ odd},\\ 3^{N+2}-14\cdot2^N-15N+5,&N\text{ even}.\end{cases}
\]
\end{corollary}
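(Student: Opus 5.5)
The plan is to read all three assertions off Theorem~\ref{thm:left} and Theorem~\ref{thm:motzkin}, using two simple involutions for the qualitative claims.

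\emph{Palindromy and degree.} Reflect a rainbow map: $g(k):=N-f(N-k)$. Then $|g(k)-k|=|f(N-k)-(N-k)|$, so $g$ is again a rainbow map and $f\mapsto g$ is an involution. Every arc of $g$ points in the direction opposite to the corresponding arc of $f$, so $L(g)=m-L(f)=N-1-L(f)$. Hence $\sum_f z^{L(f)}$ is palindromic of degree at most $N-1$. The coefficient of $z^0$ is positive: the all-$\RR$ type has bounds $N-k$, which run through $1,\dots,m$, so it carries exactly one map. Therefore the degree is exactly $N-1$.

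\emph{Vanishing at $z=-1$.} For $N\ge2$ some interior vertex $v$ uses length $1$. Reversing the direction of this single arc, $v\to v\pm1$, keeps the target in $\{0,\dots,N\}$ and leaves the multiset of lengths unchanged, so the result is again a rainbow map. This is a fixed-point-free involution that changes $L$ by $\pm1$. So maps with $L$ even and with $L$ odd are equinumerous, which is exactly the statement that the polynomial vanishes at $-1$.

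\emph{The formula for $A_2(N)$.} By Theorem~\ref{thm:left}, $A_2(N)$ is the total weight of the paths in Theorem~\ref{thm:motzkin} times $\Pr[L=2\mid\text{path}]$. For odd $N$ we need $R'-\kappa+2B=2$, which leaves two cases.
\begin{itemize}[nosep]
\item $\kappa=R'$ and $B=1$. The path is all level steps at height $1$, with unsigned weight $1$ per step, giving a contribution $R'$.
\item $\kappa=R'-2$ and $B=0$. The two non-level steps must be an up step at some position $i$ followed by a down step at a later position $j$, since a down step from height $1$ has weight $0$. The unsigned weight is $1$ for levels at height $1$, $2\cdot4^{\,j-i-1}\cdot2$ for the excursion, and $1$ afterwards. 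This gives $\sum_{1\le i<j\le R'}4^{\,j-i}$.
\end{itemize}
Here ``unsigned'' means with the factor $2$ of each level step replaced by the coin choice, which exactly compensates $\Pr[B=\cdot]$. Summing the geometric double sum gives an expression in $4^{R'}=2^{N-1}$ and $R'$.

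For even $N$ there is the extra midpoint coin $B'$ and the factor $x_{R'+1}$, so $L=R'-\kappa+2B+B'=2$ adds the following cases:
\begin{itemize}[nosep]
\item $\kappa=R'-1$, $B=0$, $B'=1$: a single up step ending at height $2$, with midpoint factor $2$.
\item $\kappa=R'$, $B=1$, $B'=0$: all level steps.
\item $\kappa=R'-2$, $B=0$, $B'=0$: an up step followed later by a down step.
\end{itemize}
Each case again reduces to a geometric sum.

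\emph{Main obstacle.} The expected obstacle is the bookkeeping in these sums. In particular, the announced formula contains $3^{N+2}$, which cannot come from the sums above: the level weights at heights $1$ and $2$ are $1$ and $4$, not $3$. So I expect to have mis-weighted something. The fix I would try first is to recheck the height-$2$ level contributions, using $\Pr[B=0]=2^{-\kappa}$ against the actual weight $2x^2=8$. Whatever the weights turn out to be, I would verify the final closed form against direct computation for small $N$ (for instance $N=5,6$) before trusting it.
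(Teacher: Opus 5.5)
\textbf{Verdict: there is a genuine gap, in the computation of $A_2(N)$.} The qualitative part of your proposal is correct. The flip of the arc of length $1$ is exactly the paper's argument for vanishing at $z=-1$. Your reflection $g(k)=N-f(N-k)$ does send $L$ to $N-1-L$, so palindromy follows; the paper instead reads it off Theorem~\ref{thm:left} by giving level steps the symmetric weight $(z+z^{-1})x^2$. Your observation that the all-$\RR$ type carries one map settles the degree.

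\textbf{The missing case.} In the case $\kappa=R'-2$, $B=0$ you assert that the two non-level steps must be up then down, ``since a down step from height $1$ has weight $0$.'' That reason only forces the \emph{first} non-level step to be up. The second non-level step starts at height $2$, where a down step (weight $2\cdot1=2$) and an up step (weight $2\cdot3=6$) are both allowed.

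The up--up paths end at height $3$. Every later level step, with the zero-left choice, then has weight $3^2=9$. This produces factors $9^{R'-r_2}$, and $9^{R'}$ equals $3^{N-1}$ or $3^{N-2}$. That is precisely the source of the $3^{N+2}$ you could not account for. The correct count for $N=2R'+1$ is
\[
  A_2=R'+\sum_{1\le r_1<r_2\le R'}2\cdot4^{r_2-r_1-1}\bigl(6\cdot9^{R'-r_2}+2\bigr).
\]
For $N=2R'+2$ the same expression holds with $6$ replaced by $18$, because the midpoint factor is $x_{R'+1}=3$ after up--up. To it you add your single-up-step term $\sum_{r=1}^{R'}4\cdot4^{R'-r}$. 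Your even case has the same omission.

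\textbf{Check at $N=5$.} The correct sum gives $2+16=18$, which agrees with $90A_2=3^7-16\cdot2^5-75+20=1620$. Your sums give only $2+4=6$. The small-$N$ check you planned would therefore have caught the error.

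\textbf{Your proposed fix.} Rechecking the height-$2$ level weights would not repair this. Those weights, $4$ per step once the coin factor is removed, are already right; what is missing is an entire case.
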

\begin{proof}
By Theorem~\ref{thm:left}, the generating function is $z^{R'}(1+z)^{[N\text{ even}]}$ times the total weight of the paths in which the level weight $2x^2$ is replaced by $(z+z^{-1})x^2$ and the final factor $2x_{R'+1}$ by $x_{R'+1}$; so it is palindromic. It vanishes at $z=-1$ because of an involution that changes the parity of $L$: If $k$ is the vertex of length $1$, replace $f(k)=k+1$ by $f(k)=k-1$ and vice versa (both targets lie in $[0,N]$, and all lengths are unchanged).
For $A_2$, the first step that is not level must be up, since a down step from height $1$ has weight $0$. For $N=2R'+1$, either all steps are level (height $1$ throughout) and exactly one level pair has two left arcs ($R'$ ways), or there are exactly two non-level steps $r_1<r_2$: The first is up (weight $1\cdot2$), the steps between are level at height $2$ (weight $4$ each), and the second is up ($2\cdot3$, then $9$ per step) or down ($2\cdot1$, then $1$ per step). Thus
$A_2=R'+\sum_{r_1<r_2}2\cdot4^{r_2-r_1-1}\bigl(6\cdot9^{R'-r_2}+2\bigr)$.
For $N=2R'+2$ the midpoint contributes the factor $x_{R'+1}$ if it points right and one more left arc if it points left, so
$A_2=R'+\sum_{r_1<r_2}2\cdot4^{r_2-r_1-1}\bigl(18\cdot9^{R'-r_2}+2\bigr)+\sum_{r=1}^{R'}4\cdot4^{R'-r}$,
the last sum counting the paths with a single non-level step $r$ (up) and the midpoint pointing left. Summing the geometric series gives the formulas.
\end{proof}

\subsection{Vertices that reach the ground}\label{ssec:ground}
How few interior vertices can have their orbit $k\to f(k)\to\cdots$ reach the ground, the others all flowing into cycles?
\begin{theorem}\label{thm:ground}
For $N\ge3$ every rainbow map has at least two interior vertices that reach the ground. For $N\ge6$ this is attained by
\[
  f(1)=N,\qquad f(k)=N-1\quad(2\le k\le N-4),
\]
\[
  f(N-3)=N-4,\qquad f(N-2)=0,\qquad f(N-1)=N-3,
\]
in which only $1$ and $N-2$ reach the ground. For $N=3,4,5$ the minimum is $N-1$.
\end{theorem}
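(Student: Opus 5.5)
The lower bound comes from the two longest lengths, the construction is checked directly, and the small cases follow from the table.

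\emph{Lower bound.} Let $N\ge3$ and let $f$ be any rainbow map. I will exhibit two distinct interior vertices whose orbits reach the ground, by looking only at the two longest lengths $m=N-1$ and $N-2$.

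The length $m$ must be used by some vertex $k$ with $|f(k)-k|=N-1$. Since $0\le f(k)\le N$ and $1\le k\le N-1$, the only possibilities are $1\to N$ and $N-1\to 0$. Both point directly to the ground. By the symmetry $i\mapsto N-i$, which preserves all lengths, I may assume $f(1)=N$.

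Next take the vertex using the length $N-2$; it is a different vertex, because $N-2\ne N-1$ and $N-2\ge1$. Listing $k\pm(N-2)\in[0,N]$ gives the candidates $1\to N-1$, $2\to N$, $N-2\to 0$ and $N-1\to 1$.
\begin{itemize}
\item $1\to N-1$ is excluded, because vertex $1$ already has length $m$.
\item $2\to N$ and $N-2\to 0$ point directly to the ground.
\item $N-1\to 1$ reaches the ground through $1\to N$.
\end{itemize}
In every case a second interior vertex reaches the ground, so at least two do. The argument is uniform in $N\ge3$; for $N=3$ the candidates collapse (for instance $N-2=1$ and $2=N-1$), but the same list still applies.

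\emph{Construction for $N\ge6$.} First I check that the stated $f$ is a rainbow map by reading off its lengths:
\begin{itemize}
\item $1\to N$ has length $N-1$;
\item $k\to N-1$ for $2\le k\le N-4$ has length $N-1-k$, which runs over $N-3,\dots,3$;
\item $N-3\to N-4$ has length $1$;
\item $N-2\to 0$ has length $N-2$;
\item $N-1\to N-3$ has length $2$.
\end{itemize}
Together these are exactly $1,\dots,N-1$, each once. The hypothesis $N\ge6$ guarantees $2\le N-4$, so the range $2\le k\le N-4$ is nonempty and contains $N-4$ itself; hence $f(N-4)=N-1$.

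Now I trace the orbits. We have $N-1\to N-3\to N-4\to N-1$, and this is a genuine $3$-cycle because the three vertices are distinct. Every $k$ with $2\le k\le N-5$ maps into $N-1$, which lies on this cycle, so its orbit never reaches the ground. Only $1$ and $N-2$ map to the ground. This shows the bound two is attained.

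\emph{Small cases $N=3,4,5$.} Table~\ref{tab:values} (see also Example~\ref{ex:small}) gives $\tau(N)=W(N)$ for these $N$. By Lemma~\ref{lem:inj}, this means every rainbow map of these orders is acyclic. In an acyclic rainbow map every orbit reaches the ground, so all $N-1$ interior vertices do, and the minimum is $N-1$.

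The only delicate step is the case analysis for the vertex of length $N-2$: one must ensure that its possible target $1$ leads to the ground rather than into a cycle. This holds after the symmetry normalization $f(1)=N$.
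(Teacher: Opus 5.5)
Your proposal is correct and follows the paper's proof essentially step by step. Both arguments use the mirror normalization to $f(1)=N$, the four candidate arcs of length $N-2$, a direct length check of the construction showing the triangle $N-1\to N-3\to N-4\to N-1$, and $\tau(N)=W(N)$ for $N\le5$ combined with Lemma~\ref{lem:inj}. For $N=3$ the candidate $N-2\to0$ is $1\to0$; your earlier remark that the vertex of length $N-2$ differs from $1$ already rules it out, which is how the paper phrases it (``an arc starting at $1$ is impossible'').
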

\begin{proof}
The map $f\mapsto\bar f$, $\bar f(k):=N-f(N-k)$, is a bijection on rainbow maps of order $N$ that sends the vertices reaching the ground to their mirror images; so we may assume that the arc of length $N-1$ is $1\to N$ (the only alternative is $N-1\to0$). The arc of length $N-2$ is one of $1\to N-1$, $2\to N$, $N-2\to0$, $N-1\to1$. An arc starting at $1$ is impossible, since vertex $1$ already has its arc; in each remaining case the starting vertex differs from $1$ and reaches the ground ($N-1\to1\to N$).
In the construction the lengths are $N-1$ (vertex $1$), $N-1-k$ for $2\le k\le N-4$ (giving $3,\dots,N-3$), then $1$, $N-2$, $2$. Since $N-4\ge2$, we have $N-4\to N-1$, so $N-1\to N-3\to N-4\to N-1$ is a triangle, and every vertex in $[2,N-4]$ points into it.
For $N\le5$ we have $W(N)=\tau(N)$ (Table~\ref{tab:values}), so there are no cycles.
\end{proof}
\noindent The bijection $f\mapsto\bar f$ has no fixed points for $N\ge3$ (a fixed point would need $\delta_k=\delta_{N-k}$). Hence $W(N)$ is even, and for each $d$ the arc of length $d$ points left in exactly half of the rainbow maps.

\subsection{Graceful trees in which vertex $0$ has large degree}\label{ssec:trees}
\begin{proposition}\label{prop:deg0}
Let $\deg0$ be the degree of vertex $0$ (including the edge $\{0,N\}$). There is $1$ graceful tree with $\deg0=N$, there are $\lfloor N/2\rfloor$ with $\deg0=N-1$, and for $N\ge2$ there are $\frac58N(N-2)$ ($N$ even) or $\frac18(5N-7)(N-1)$ ($N$ odd) with $\deg0=N-2$.
\end{proposition}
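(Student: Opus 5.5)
By Lemma~\ref{lem:inj}, graceful trees correspond bijectively to acyclic rainbow maps $f$, and the tree edges at $0$ are $\{0,N\}$ together with the arcs $k\to0$. Hence $\deg0=1+\#\{k:f(k)=0\}$. The arc $k\to0$ has length exactly $k$. So I will count acyclic rainbow maps according to the set $E$ of interior vertices that do \emph{not} point to $0$, where $|E|=N-1-(\deg0-1)$. Every $k\notin E$ uses its own length $k$. Therefore the vertices of $E$ must use exactly the lengths in $E$, and none of them may point to $0$.

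\emph{Case $\deg0=N$.} Here $E=\emptyset$ and $f\equiv0$. This is acyclic and gives $1$ tree.

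\emph{Case $\deg0=N-1$.} Here $E=\{j\}$, and $j$ must use length $j$ without pointing to $0$. So $f(j)=2j$, which requires $2j\le N$. The target $2j$ is either $N$ or an interior vertex pointing to $0$, so the map is acyclic. This gives $\lfloor N/2\rfloor$ trees.

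\emph{Case $\deg0=N-2$.} Here $E=\{i,j\}$ with $i<j$, and there are two ways to assign the lengths $\{i,j\}$.
\begin{enumerate}[label=(\alph*)]
\item $\delta_i=i$ and $\delta_j=j$. As above this forces $f(i)=2i$ and $f(j)=2j$, requiring $2j\le N$. The map is acyclic, because $f(j)>j>i$ means $j$ goes to the ground in at most two steps, and so does $i$. This gives $\binom{\lfloor N/2\rfloor}{2}$ maps.
\item $\delta_i=j$ and $\delta_j=i$. Since $i-j<0$, the vertex $i$ must point right, so $f(i)=i+j$, which requires $i+j\le N$. The vertex $j$ may point to $j+i\le N$ or to $j-i\ge1$; the latter is not $0$, so both targets are allowed. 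For acyclicity I would check that the only possible cycle would have to pass through both $i$ and $j$. This would need $f(j)=j-i=i$, i.e.\ $j=2i$, and then $f(i)=3i\ne j$, so no cycle closes. This gives $2\,\#\{1\le i<j:\ i+j\le N\}$ maps.
\end{enumerate}
The assignments (a) and (b) produce different maps, since $\delta_i$ differs.

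The final step is the arithmetic, which is routine.
\begin{itemize}
\item For $N=2R$: $\#\{i<j:\ i+j\le N\}=\sum_{i<R}(N-2i)=R(R-1)$, so the total is $\tfrac12R(R-1)+2R(R-1)=\tfrac58N(N-2)$.
\item For $N=2R+1$: the count is $\sum_{i\le R}(N-2i)=R^2$, so the total is $\tfrac12R(R-1)+2R^2=\tfrac18(5N-7)(N-1)$.
\end{itemize}

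The only point needing care is the acyclicity check in case (b), together with making sure that no target equals $0$. All other steps are forced.
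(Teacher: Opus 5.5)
Your proof is correct, but it splits the case $\deg0=N-2$ differently from the paper. The paper argues directly on trees. Vertex $0$ misses $x<y$, the two remaining edges have lengths $x$ and $y$, and the paper distinguishes whether $\{x,y\}$ is itself an edge. That shape-based split gives three counts, $\binom{R}{2}-\lfloor R/2\rfloor$, $2P-\lfloor N/3\rfloor$ and $\lfloor N/3\rfloor+\lfloor N/4\rfloor$; the correction terms come from the configurations with $y=2x$ and cancel only at the end.

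You instead split by which of $i,j$ carries which length. The length together with the condition $f\ne0$ fixes the targets, up to the single binary choice for $j$ in case (b). So you get $\binom{R}{2}+2P$ directly, with no overlap bookkeeping, and the $j=2i$ configurations are absorbed into (a) and (b).

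Going through Lemma~\ref{lem:inj} also replaces the paper's connectivity check by an acyclicity check, and that check is automatic. A vertex on a cycle cannot point to the ground, so every cycle lies in $E$; every cycle has at least three vertices, while $|E|\le2$. This also covers a small slip in (a): when $j=2i$, the vertex $i$ may need three steps ($i\to j\to4i\to0$), not two. The conclusion is unaffected.

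What the paper's version buys is an explicit description of the trees themselves, rather than of their oriented rainbow maps.
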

\begin{proof}
If $\deg0=N-1$, vertex $0$ misses exactly one $v$; the edge of length $v$ avoids $0$, so it is $\{v,2v\}$ with $2v\le N$.
If $\deg0=N-2$, vertex $0$ misses $x<y$ ($y<N$), and the two remaining edges have lengths $x$ and $y$. If $\{x,y\}$ is not an edge, then $x$ and $y$ hang on neighbors of $0$: If the edge at $x$ has length $x$ and the edge at $y$ has length $y$, the neighbors are $2x$ and $2y$ (requiring $2y\le N$ and $2x\ne y$); if the lengths are exchanged, the neighbors are $x+y$ and $y\pm x$ (requiring $x+y\le N$, and $y-x$ only if $y\ne2x$).
If $\{x,y\}$ is an edge, its length $y-x$ must be $x$, so $y=2x$, and the other edge has length $2x$ and joins $x$ to $3x$ or $y$ to $4x$. Each case gives $N$ edges connected through $0$, hence a graceful tree, and they are distinct.
With $R=\lfloor N/2\rfloor$ and $P=\#\{1\le x<y:x+y\le N\}$ ($R(R-1)$ for $N=2R$, $R^2$ for $N=2R+1$), the three cases give $\binom R2-\lfloor\frac R2\rfloor$, $2P-\lfloor\frac N3\rfloor$ and $\lfloor\frac N3\rfloor+\lfloor\frac N4\rfloor$ trees; since $\lfloor\frac R2\rfloor=\lfloor\frac N4\rfloor$, the total is $\binom R2+2P$.
\end{proof}

%======================================================================
\section{Few cycles}\label{sec:cycles}
This section and the next two prove the right half of Theorem~\ref{thm:compare}. There are three steps.
\emph{Few cycles} (this section): To close a cycle, its last arc must point exactly back to its first vertex $v$, which requires one specific length. We show that, whatever has been revealed so far (as long as it is not much), the probability that a given vertex points to $v$ is at most about $1/\rho(v)$. Summing over vertices and cycle sizes gives $O(\log^2N)$ cycles on average.
\emph{Cycles are easy to repair} (Section~\ref{sec:repair}): Each cycle can be destroyed by changing $O(\log N)$ edges.
\emph{Counting} (Section~\ref{sec:count}): Few changes and few cycles mean that rainbow maps outnumber trees by at most $e^{O(\log^4N)}$.

\subsection{Longer lengths are not harder to get}\label{ssec:swap}
Fix a feasible type $\sigma$ and take the uniform measure on its board. Then the target $f(k)$ is determined by the length $\delta_k$ ($k-\delta_k$ if $\sigma_k=\LL$, $k+\delta_k$ if $\sigma_k=\RR$).

A row--length pair is a \emph{cell}. A \emph{history} is a set of $n$ cells in distinct rows and with distinct lengths, consistent with at least one rainbow map of type $\sigma$; it specifies the lengths of $n$ rows, which are called \emph{revealed}. Given a history, the rest is uniform over its \emph{completions}. Let $A$ be the set of lengths not used by the history.

\begin{lemma}[Swap]\label{lem:swap}
Given a history, let row $k$ be unrevealed and let $d<d'$ be lengths in $A$ with $d'\le\ell_k$. Then $\#\{\text{completions with }\delta_k=d\}\le\#\{\text{completions with }\delta_k=d'\}$.
Hence, for $d\in A$ with $d\le\ell_k$, $\ \Pr[\delta_k=d\mid\text{history}]\le1\big/\bigl|A\cap[d,\ell_k]\bigr|$.
\end{lemma}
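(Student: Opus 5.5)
We prove the inequality by an explicit injection from completions with $\delta_k=d$ to completions with $\delta_k=d'$, built by following a chain of swaps. Take a completion $g$ with $\delta_k=d$. Since $d'\in A$ is not used by the history, in $g$ the length $d'$ is used by some unrevealed row $j_1\ne k$. If $d\le\ell_{j_1}$, swap: give $k$ length $d'$ (allowed since $d'\le\ell_k$) and $j_1$ length $d$. This gives a completion with $\delta_k=d'$, and the history cells are untouched.

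The trouble is that $\ell_{j_1}$ might be smaller than $d$, but this cannot happen: row $j_1$ uses $d'>d$, so $\ell_{j_1}\ge d'>d$. So the single transposition always works. The map $g\mapsto g'$ exchanges the lengths of row $k$ and of the row holding $d'$. It is injective, because from $g'$ we recover $g$ by locating the unique row $j\ne k$ with length $d$ and swapping back. Hence the first inequality holds.

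For the probability bound, apply the swap inequality to every $d'\in A\cap(d,\ell_k]$. Each such $d'$ satisfies $\#\{\delta_k=d'\}\ge\#\{\delta_k=d\}$, and so does $d'=d$ trivially. The events $\delta_k=d'$ for distinct $d'\in A\cap[d,\ell_k]$ are disjoint completion sets. Summing therefore gives
\[
  \bigl|A\cap[d,\ell_k]\bigr|\cdot\#\{\delta_k=d\}\le\#\{\text{completions}\}.
\]
Dividing by the number of completions, which is positive because a history is consistent, gives the bound.

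The only point needing care is that the swapped map is still a completion of the same history. Row $k$ is unrevealed and row $j_1$ holds a length in $A$, so $j_1$ is also unrevealed. Thus no revealed cell changes, and the lengths remain a permutation within the bounds. I expect no real obstacle beyond checking this.
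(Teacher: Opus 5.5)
Your proposal is correct and is essentially the paper's proof. You use the same single transposition between row $k$ and the unrevealed row holding $d'$, justified by the prefix structure $d<d'\le\ell_{j_1}$, with injectivity by swapping back. You then get the probability bound by summing over $A\cap[d,\ell_k]$. The opening mention of a ``chain of swaps'' is unnecessary, since, as you then observe, one swap always suffices.
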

\begin{proof}
Take a completion with $\delta_k=d$. The length $d'$ is not used by the history, so the row $k'$ using it is unrevealed, and $k'\ne k$. Swap the lengths of $k$ and $k'$: Row $k$ gets $d'\le\ell_k$, and row $k'$ gets $d<d'\le\ell_{k'}$, which is allowed because the allowed set of $k'$ is a prefix. This is a completion with $\delta_k=d'$; swapping $k$ with the row that uses $d$ recovers the original, so the map is injective.
For the second assertion, each length in $A\cap[d,\ell_k]$ is at least as likely for $k$ as $d$, and these probabilities sum to at most $1$.
\end{proof}
\noindent The lemma uses only that the allowed sets are prefixes. For example, on the board of Figure~\ref{fig:ferrers} with the empty history, the probabilities of the lengths of vertex $1$ are nondecreasing (Corollary~\ref{cor:pps}), and the swap with $d'=5$ sends the placement $(3,4,1,2,5)$ of the figure to $(5,4,1,2,3)$.

\begin{lemma}[Pointing to a given vertex]\label{lem:point}
Given a history with $n$ cells, let row $k$ be unrevealed and let $v$ be an interior vertex with $n\le\rho(v)$. Then $\ \Pr[k\to v\mid\text{history}]\le\dfrac1{\rho(v)+1-n}$.
\end{lemma}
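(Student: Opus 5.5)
Within a fixed feasible type $\sigma$ the target $f(k)$ is determined by $\delta_k$, so the event $k\to v$ is the event that $\delta_k$ equals one particular length $d:=|v-k|$. Here the direction is $\sigma_k$: $d=k-v$ if $\sigma_k=\LL$ and $v<k$, and $d=v-k$ if $\sigma_k=\RR$ and $v>k$. If $v$ lies on the wrong side of $k$, if $d\notin A$, or if $d>\ell_k$, the probability is $0$ and there is nothing to prove. Otherwise the second assertion of Lemma~\ref{lem:swap} gives
\[
\Pr[k\to v\mid\text{history}]\le 1\big/\bigl|A\cap[d,\ell_k]\bigr|.
\]
So the whole proof reduces to the lower bound $\bigl|A\cap[d,\ell_k]\bigr|\ge\rho(v)+1-n$.

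I count the integers in $[d,\ell_k]$ and discard at most $n$ of them, those used by the history. By symmetry take $\sigma_k=\LL$, so $d=k-v$ and $\ell_k=k$. Then $[d,\ell_k]=[k-v,k]$ has $v+1$ elements. The case $\sigma_k=\RR$ is the mirror image: $d=v-k$ and $\ell_k=N-k$, so the interval $[v-k,N-k]$ has $N-v+1$ elements. In either case the interval has at least $\min(v,N-v)+1=\rho(v)+1$ elements.

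The history uses exactly $n$ lengths, so $A$ misses at most $n$ of these. Hence $|A\cap[d,\ell_k]|\ge\rho(v)+1-n$, which is positive because $n\le\rho(v)$. This gives the bound.

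The one point needing care is the conditioning on the type. The statement is meant for the uniform measure on the board of a fixed feasible type $\sigma$ (the setting of this subsection), with histories defined relative to $\sigma$. If one wants it for all rainbow maps, the same bound holds for each type, and the conditional probability given a history is a mixture over the types consistent with it, so the bound persists. I expect no real obstacle. The key observation is that the window $[d,\ell_k]$ has length independent of $k$: it depends only on which side $v$ lies, and its size is always at least the rank of $v$ plus one.
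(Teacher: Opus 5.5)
Your proposal is correct and matches the paper's proof: you discard the zero-probability cases, apply the second assertion of Lemma~\ref{lem:swap}, and note that the window $[d,\ell_k]$ has $v+1$ or $N-v+1\ge\rho(v)+1$ elements, of which the history uses at most $n$. Your closing remark about mixing over types is not needed, since the lemma (and its use in Lemma~\ref{lem:memb}) is stated for a fixed type $\sigma$, but it does no harm.
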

\begin{proof}
The probability is zero if $v$ is not in the direction of $k$ or the required length $d=|k-v|$ is used. Otherwise, if $\sigma_k=\LL$ then $d=k-v$ and $\ell_k=k$, so $[d,\ell_k]$ contains $v+1$ lengths; if $\sigma_k=\RR$ then $d=v-k$ and $\ell_k=N-k$, so it contains $N-v+1$ lengths. Either way there are at least $\rho(v)+1$, of which at most $n$ are used, so $|A\cap[d,\ell_k]|\ge\rho(v)+1-n$. Apply Lemma~\ref{lem:swap}.
\end{proof}
\noindent The bound is uniform in $k$ and in the history, and it is attained ($N=3$, type $\RR\LL$: $\Pr[1\to2]=\frac12$). It bounds the probabilities without ordering them: Given $\delta_3=1$ on the board of Figure~\ref{fig:ferrers}, the bounds for $v=2$ and $v=3$ are $\frac12>\frac13$, but $\Pr[1\to2]=0<\Pr[1\to3]=\frac16$.

\subsection{The expected number of cycles}\label{ssec:cycles}
\begin{lemma}\label{lem:memb}
Let $t\ge3$ and $\rho(v)\ge t-1$. Then $\ \Pr[v\text{ lies on a }t\text{-cycle}\mid\sigma]\le\dfrac1{\rho(v)-t+2}$, which is at most $\dfrac2{\rho(v)}$ if moreover $\rho(v)\ge2t$.
\end{lemma}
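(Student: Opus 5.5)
The plan is to reveal the orbit of $v$ step by step and apply Lemma~\ref{lem:point} only at the step that closes the cycle. If $v$ lies on a $t$-cycle, then $v=v_0\to v_1\to\dots\to v_{t-1}\to v_0$ with distinct interior vertices. We work under the uniform measure on the board of the feasible type $\sigma$; infeasible types carry no maps, so there is nothing to prove for them.

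The first step is to reveal the lengths of $v_0,v_1,\dots,v_{t-2}$ one at a time. Each revealed length determines the next vertex $v_{i+1}=f(v_i)$. If some $v_{i+1}$ is ground, or repeats an earlier vertex, before $t-1$ arcs have been revealed, the event fails. After $t-1$ revealed rows we hold a history $H$ with $n=t-1$ cells. On the event in question, the row $v_{t-1}$ is unrevealed, since it is distinct from $v_0,\dots,v_{t-2}$.

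The second step is to bound the closing probability. Conditioned on $H$, the event requires $v_{t-1}\to v$. Since $n=t-1\le\rho(v)$, Lemma~\ref{lem:point} gives
\[
\Pr[v_{t-1}\to v\mid H]\le\frac1{\rho(v)+1-(t-1)}=\frac1{\rho(v)-t+2}.
\]
This bound is uniform in the history. Summing over all histories, each of which is a disjoint atom determined by the revealed path, gives
\[
\Pr[v\text{ on a }t\text{-cycle}\mid\sigma]\le\sum_H\Pr[H]\cdot\frac1{\rho(v)-t+2}\le\frac1{\rho(v)-t+2}.
\]
One point needs care: the path and the choice of $v_{t-1}$ are determined by $H$, so the events for different $H$ are disjoint. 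Each $H$ has positive probability, so it is a genuine history, consistent with some completion.

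The last step is the final inequality. If $\rho(v)\ge2t$, then $\rho(v)-t+2\ge\rho(v)/2+2>\rho(v)/2$, which gives the bound $2/\rho(v)$.

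The only subtle point is making the sequential revealing rigorous. The cycle must have exactly $t$ vertices, so $v$ must be the $t$th iterate of itself and not an earlier one. This is handled by restricting to histories in which $v_0,\dots,v_{t-1}$ are distinct interior vertices. The event "$v$ on a $t$-cycle" is then exactly the disjoint union, over such histories, of the events $\{H\}\cap\{v_{t-1}\to v\}$.
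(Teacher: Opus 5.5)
Your proof is correct and is essentially the paper's argument. Both reveal the forward path $v=u_0\to u_1\to\dots\to u_{t-1}$ for $t-1$ steps, discard outcomes that hit the ground or repeat a vertex, apply Lemma~\ref{lem:point} with $n=t-1\le\rho(v)$ to the closing arc $u_{t-1}\to v$ in each surviving history, and average; the bound $\rho(v)-t+2\ge\rho(v)/2$ for $\rho(v)\ge2t$ is also the same.
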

\begin{proof}
Reveal the forward path from $v=u_0$: Revealing the length of $u_0$ gives $u_1$, revealing the length of $u_1$ gives $u_2$, and so on, $t-1$ arcs in all, ending at $u_{t-1}$. \emph{Stop} as soon as some $u_j$ ($1\le j\le t-1$) is a ground vertex or equals an earlier $u_i$; then $v$ is not on a $t$-cycle.
Each outcome that does not stop is a history with $n=t-1$ cells (the statement that these $t-1$ rows have these lengths, with no further condition), in which $u_0,\dots,u_{t-1}$ are distinct and $u_{t-1}$ is unrevealed. The vertex $v$ lies on a $t$-cycle if and only if the process does not stop and $u_{t-1}\to v$.
Apply Lemma~\ref{lem:point} to each such history ($n=t-1\le\rho(v)$) and average. For the second bound, $\rho(v)\ge2t$ gives $\rho(v)-t+2\ge\rho(v)/2$.
\end{proof}

The bound of Lemma~\ref{lem:memb} is useless for vertices of small rank, since after $t-1$ revealed cells few lengths may remain. We therefore split cycles into two kinds. Cycles most of whose vertices have large rank are handled by summing Lemma~\ref{lem:memb} over vertices. Cycles most of whose vertices have rank $<2t$ need no probability at all: There are fewer than $4t$ such vertices, and distinct cycles are disjoint.

\begin{theorem}[Number of cycles]\label{thm:cycles}
For every feasible type $\sigma$,
\[
  \Ex[c\mid\sigma]\ \le\ B_N:=8\mathcal H_R^2+16\lfloor\log_2m\rfloor,\qquad \mathcal H_R=\sum_{j=1}^{R}\frac1j .
\]
Averaging over $\sigma$, $\Ex[c]\le B_N=O(\log^2N)$.
\end{theorem}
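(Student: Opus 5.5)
We bound $\Ex[c\mid\sigma]$ by splitting the cycles according to their length $t$ and to how many of their vertices have large rank. Fix $t\ge3$ and call a vertex \emph{$t$-large} if $\rho(v)\ge2t$, otherwise \emph{$t$-small}. A $t$-cycle is \emph{heavy} if at least half of its vertices are $t$-large, and \emph{light} otherwise.

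For a heavy $t$-cycle $C$ we have $1\le\frac2t\sum_{v\in C,\ \rho(v)\ge2t}1$. Summing over cycles and taking expectations gives
\[
\Ex[\#\text{heavy }t\text{-cycles}\mid\sigma]\le\frac2t\sum_{v:\rho(v)\ge2t}\Pr[v\text{ on a }t\text{-cycle}\mid\sigma]\le\frac2t\sum_{v:\rho(v)\ge2t}\frac2{\rho(v)}
\]
by Lemma~\ref{lem:memb}. Each rank is shared by at most two vertices, so the inner sum is at most $2\sum_{r=2t}^{R}\frac2r\le4\mathcal H_R$. Hence the contribution of length $t$ is at most $8\mathcal H_R/t$, and it vanishes unless $2t\le R$. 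Summing over $t\le R$ gives at most $8\mathcal H_R^2$ heavy cycles in expectation.

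For light cycles we need no probability. There are fewer than $4t$ vertices of rank $<2t$ (two per rank). A light $t$-cycle contains more than $t/2$ such vertices, and distinct cycles are disjoint. Hence at most $4t/(t/2)=8$ light $t$-cycles exist, deterministically. Summing this over all $t$ is too crude, so we group the lengths dyadically. For $2^j\le t<2^{j+1}$, every light cycle of such a length contains more than $2^{j-1}$ vertices of rank $<2^{j+2}$, and there are fewer than $2^{j+3}$ such vertices. So the number of light cycles with length in $[2^j,2^{j+1})$ is less than $2^{j+3}/2^{j-1}=16$.

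The number of dyadic blocks needed is at most about $\lfloor\log_2m\rfloor$, since $t\le m$ and $t\ge3$ removes the block $j=0$; one can also use $j\ge1$ up to $\lfloor\log_2 m\rfloor$. This gives at most $16\lfloor\log_2m\rfloor$ light cycles. Adding the two parts yields $B_N$. The bound is uniform in the feasible type $\sigma$, so averaging over types (a mixture) gives $\Ex[c]\le B_N$.

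The main care point is matching the constants exactly. In the dyadic block count, the strictness of "fewer than $4t$" and "more than half" must give the constant $16$. The block indices must number exactly $\lfloor\log_2m\rfloor$, using $3\le t\le m$. If the blocks are off by one, I would instead define light relative to the block's upper threshold, and absorb the top block using $t\le m$.
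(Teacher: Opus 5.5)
Your proof is correct and is essentially the paper's argument: the same split into $t$-cycles with at least $t/2$ vertices of rank $\ge 2t$ (bounded by Lemma~\ref{lem:memb} plus disjointness of cycles, giving $8\mathcal H_R^2$) and the remaining cycles (bounded deterministically by grouping lengths into dyadic blocks $2^j\le t<2^{j+1}$, with fewer than $16$ cycles per block). Your closing worry is unnecessary, since $3\le t\le m$ forces exactly $1\le j\le\lfloor\log_2 m\rfloor$, so no change of threshold is needed.
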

\begin{proof}
A $t$-cycle is of the \emph{first kind} if at least $t/2$ of its vertices have $\rho\ge2t$, and of the \emph{second kind} otherwise (then more than $t/2$ of its vertices have $\rho<2t$).

\emph{First kind.} Let $V_t:=\{v:\rho(v)\ge2t$ and $v$ lies on a $t$-cycle$\}$. Each $t$-cycle of the first kind contributes at least $t/2$ elements of $V_t$, and distinct cycles contribute distinct elements, so the number $c^{\mathrm I}_t$ of such cycles satisfies $\frac t2\,c^{\mathrm I}_t\le|V_t|$. By Lemma~\ref{lem:memb}, $\Ex[|V_t|\mid\sigma]\le\sum_{v:\rho(v)\ge2t}2/\rho(v)$; since each rank has at most two vertices,
\[
  \Ex[c^{\mathrm I}_t\mid\sigma]\le\frac2t\sum_{v:\rho(v)\ge2t}\frac2{\rho(v)}\le\frac8t\sum_{j=2t}^{R}\frac1j,
  \qquad\sum_{t\ge3}\Ex[c^{\mathrm I}_t\mid\sigma]\le8\Bigl(\sum_{t\le R/2}\frac1t\Bigr)\mathcal H_R\le8\mathcal H_R^2 .
\]
\emph{Second kind (a deterministic bound).} Fix a dyadic range $2^j\le t<2^{j+1}$; since $3\le t\le m$, we have $1\le j\le\lfloor\log_2m\rfloor$. Each second-kind cycle in this range uses more than $t/2\ge2^{j-1}$ vertices with $\rho<2t<2^{j+2}$, and there are fewer than $2\cdot2^{j+2}=2^{j+3}$ such vertices. Cycles are disjoint, so this range contains fewer than $2^{j+3}/2^{j-1}=16$ cycles of the second kind.
Adding, $\Ex[c\mid\sigma]\le8\mathcal H_R^2+16\lfloor\log_2m\rfloor$.
\end{proof}
\noindent The proof uses no typical property of $\sigma$. The bound is not meant for small $N$: For $N=1000$, $B_N\approx513$ exceeds the trivial bound $c\le m/3$; for $N=10^6$, $B_N\approx1805$.

%======================================================================
\section{Cycles are easy to repair}\label{sec:repair}
\subsection{What has to be done}\label{ssec:todo}
In this section directions play no role: A rainbow map $f$ enters only through its undirected edge set $X(f):=\bigl\{\{k,f(k)\}:1\le k\le m\bigr\}$, and the type is irrelevant. These $m$ edges are distinct (two vertices choosing the same edge would use the same length), and each length $1,\dots,m$ occurs exactly once.

We work in the \emph{contracted graph}: Take the complete graph $K_{N+1}$ on $\{0,\dots,N\}$, delete the edge $\{0,N\}$, and identify $0$ and $N$ into one vertex $g$. Its vertices are $g,1,\dots,m$, and each edge keeps its identity and its length from $K_{N+1}$. It is a multigraph: The edges $\{0,k\}$ and $\{k,N\}$ are two different edges between $g$ and $k$ (of lengths $k$ and $N-k$), and two parallel edges form a cycle. Let $E$ be its edge set; $|E|=\sum_{d=1}^m(N+1-d)=\frac{(N-1)(N+2)}2$.
In the contracted graph the component of $X(f)$ containing $g$ is a tree and every other component contains exactly one cycle; these are the cycles of $f$. Graceful labelings correspond to \emph{rainbow spanning trees} of the contracted graph, that is, spanning trees with exactly one edge of each length: Deleting $\{0,N\}$ from a graceful tree and contracting gives such a tree, and conversely a rainbow spanning tree, lifted to $K_{N+1}$ together with $\{0,N\}$, has $N$ edges on $N+1$ vertices, is connected, and has lengths $1,\dots,N$. So there are $\tau(N)$ rainbow spanning trees.

Given a rainbow map with cycles, we want to change its edge set into a rainbow spanning tree by changing few edges. Deleting one edge from each cycle leaves a forest in which some lengths are missing. So the problem is this: \emph{Given a forest $F$ with at most one edge of each length and some length missing, how do we add an edge of a missing length?}
If some edge of a missing length joins two components of $F$, we simply add it. Otherwise every edge of that length closes a cycle in $F$; we can add one and delete another edge of that cycle, of some length $d'$, so that $d'$ is now missing, and continue. We must show that such chains are short.

For this we need to know: \emph{How many edges without a cycle can be chosen using only $k$ distinct lengths?} The answer is at least $\min(2k,m)$ (Corollary~\ref{cor:rank}). Then the set of lengths involved in a chain at least doubles at each level, so $\lceil\log_2m\rceil$ levels suffice (Section~\ref{ssec:augment}).

\subsection{How large a forest can $k$ lengths support?}\label{ssec:word}
For $\Delta\subseteq\{1,\dots,m\}$ let $G_\Delta$ be the graph on $\{0,\dots,N\}$ consisting of all edges with lengths in $\Delta\cup\{N\}$ (the only edge of length $N$ is $\{0,N\}$), and let $r(\Delta)$ be the number of edges of a spanning forest (a maximal acyclic subgraph) of $G_\Delta$, that is, $N+1$ minus the number of components.

The key observation is that an edge of length $d$ joins $i$ and $i+d$. So if we label each vertex $i$ by the name of its component in $G_\Delta$, the resulting word $w=w_0\ldots w_N$ of length $N+1$ has every $d\in\Delta\cup\{N\}$ as a period. The number of lengths becomes a number of periods, and the number of components becomes the number of distinct letters. The following inequality says that many periods force few letters.

A \emph{border} of a word $w$ is a nonempty proper prefix that is also a suffix. Let $|w|$ be the length, $\mathrm{sym}(w)$ the number of distinct letters, and $\mathrm{bor}(w)$ the number of borders. For $1\le p<|w|$, $w$ has a border of length $|w|-p$ if and only if $p$ is a period of $w$ ($w_i=w_{i+p}$ for $0\le i<|w|-p$). For example, the borders of $abaaba$ are $a$ and $aba$ (periods $5$ and $3$).

\begin{theorem}[Border bound]\label{thm:border}
If $w$ is not constant, then $\ |w|-\mathrm{sym}(w)\ge2\,\mathrm{bor}(w)-1$. (Equality holds for $a^kba^k$ and $a^kbca^k$ with $k\ge1$ and distinct letters $a$, $b$, $c$.)
\end{theorem}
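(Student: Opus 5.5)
The plan is induction on $|w|$, peeling off the longest border. The starting observation is that the borders of $w$ are exactly its longest border $b$ together with the borders of $b$. A shorter border of $w$ is a prefix and a suffix of $w$, hence also a prefix and a suffix of $b$, and it is proper in $b$. Conversely, a border of $b$ is a prefix of $w$ because $b$ is a prefix, and a suffix of $w$ because $b$ is a suffix. So $\mathrm{bor}(w)=1+\mathrm{bor}(b)$ whenever $\mathrm{bor}(w)\ge1$. Write $n=|w|$ and $p=n-|b|$ for the smallest period.

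If $\mathrm{bor}(w)=0$, the claim $n-\mathrm{sym}(w)\ge-1$ is trivial. Now suppose $b=a^k$ is constant, which covers the base case. Every $a^j$ with $j\le k$ is a border, so $\mathrm{bor}(w)=k$. If $p<n/2$, the two copies of $b$ overlap or abut and cover $w$, so $w$ would be constant. Hence $p\ge n/2$, and the prefix and suffix $a^k$ are disjoint. Since $w$ is not constant, its middle part of length $n-2k\ge1$ is nonempty, and $\mathrm{sym}(w)\le 1+(n-2k)$. This gives $n-\mathrm{sym}(w)\ge2k-1$, with equality exactly when the middle letters are distinct and different from $a$. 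That matches the extremal examples $a^kba^k$ and $a^kbca^k$.

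Now let $b$ be non-constant, so the induction hypothesis gives $|b|-\mathrm{sym}(b)\ge2\,\mathrm{bor}(b)-1$. It then suffices to show
\[
  n-\mathrm{sym}(w)\ \ge\ |b|-\mathrm{sym}(b)+2,\qquad\text{that is,}\qquad p\ \ge\ \bigl(\mathrm{sym}(w)-\mathrm{sym}(b)\bigr)+2 .
\]
A letter of $w$ that does not occur in $b$ must sit at a position covered neither by the prefix copy $[0,|b|)$ nor by the suffix copy $[p,n)$ of $b$.
\begin{itemize}
\item If $p<n/2$, the two copies cover $w$, so $\mathrm{sym}(w)=\mathrm{sym}(b)$, and we need $p\ge2$. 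Since $p=1$ would make $w$ constant, this holds.
\item If $p\ge n/2$, the new letters lie in the $2p-n$ middle positions, so the right-hand side is at most $2p-n+2$. We then need $n-p=|b|\ge2$, which holds because a non-constant $b$ has length at least $2$.
\end{itemize}
This closes the induction.

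The main obstacle is getting the border recursion $\mathrm{bor}(w)=1+\mathrm{bor}(b)$ right, and noticing that the overlapping case $p<n/2$ is exactly where no new letters can appear. Both are elementary. The remaining point is bookkeeping: making sure the constant-$b$ base case is handled separately, since the induction hypothesis does not apply there.
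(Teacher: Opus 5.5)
Your proof is correct and matches the paper's argument step for step. Both use induction via the longest border with $\mathrm{bor}(w)=1+\mathrm{bor}(b)$, treat $b=a^k$ separately using the nonempty middle segment, and handle the non-constant case by noting that new letters can only occupy the $\max(0,2p-n)$ positions not covered by the two copies of $b$; your split into $p<n/2$ and $p\ge n/2$ is the paper's single bound $\mathrm{sym}(w)-\mathrm{sym}(z)\le\max(0,p-q)$, giving a gain of $\min(p,q)\ge2$, written as two cases.
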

\begin{proof}
Induction on $|w|$. If $\mathrm{bor}(w)=0$ then $|w|-\mathrm{sym}(w)\ge0>-1$. Let $\mathrm{bor}(w)\ge1$ and let $z$ be the longest border of $w$. Every shorter border of $w$ is a prefix and a suffix of $z$, hence a border of $z$, and conversely; so $\mathrm{bor}(w)=1+\mathrm{bor}(z)$. Let $q:=|z|$; then $p:=|w|-q$ is a period of $w$, and $p\ge2$ because $p=1$ would make $w$ constant.

\emph{$z$ is not constant.} Then $q\ge2$, and by induction $q-\mathrm{sym}(z)\ge2\,\mathrm{bor}(z)-1=2\,\mathrm{bor}(w)-3$. Since $w$ has period $p$, every letter of $w$ occurs among its first $p$ positions; a letter not in $z=w_0\ldots w_{q-1}$ must occur in positions $q,\dots,p-1$, so $\mathrm{sym}(w)-\mathrm{sym}(z)\le\max(0,p-q)$. Hence
\[
  \bigl(|w|-\mathrm{sym}(w)\bigr)-\bigl(q-\mathrm{sym}(z)\bigr)=p-\bigl(\mathrm{sym}(w)-\mathrm{sym}(z)\bigr)\ge\min(p,q)\ge2 ,
\]
and $|w|-\mathrm{sym}(w)\ge(2\,\mathrm{bor}(w)-3)+2=2\,\mathrm{bor}(w)-1$.

\emph{$z=a^q$.} Then $\mathrm{bor}(z)=q-1$ and $\mathrm{bor}(w)=q$. The word $w$ begins and ends with $a^q$; if $|w|\le2q$ it would be constant, so $|w|\ge2q+1$, and the letters other than $a$ lie in the middle $|w|-2q$ positions. Thus $\mathrm{sym}(w)\le1+|w|-2q$, that is, $|w|-\mathrm{sym}(w)\ge2q-1$.
\end{proof}

\begin{lemma}[$k$ lengths and the length $N$ support a forest of $\min(2k+1,N)$ edges]\label{lem:rank}
In the complete graph $K_{N+1}$, $\ r(\Delta)\ge\min\bigl(2|\Delta|+1,\,N\bigr)$ for every $\Delta\subseteq\{1,\dots,m\}$.
\end{lemma}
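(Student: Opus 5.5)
The plan is to apply the Border bound (Theorem~\ref{thm:border}) to the component word of $G_\Delta$. Label each vertex $i\in\{0,\dots,N\}$ by the name of its component in $G_\Delta$, giving a word $w=w_0\ldots w_N$ with $|w|=N+1$ and $\mathrm{sym}(w)$ equal to the number of components. Then
\[
  r(\Delta)=|w|-\mathrm{sym}(w).
\]
Every edge of length $d\in\Delta\cup\{N\}$ joins $i$ and $i+d$ for all $0\le i\le N-d$. So each such $d$ is a period of $w$, and $w$ has a border of length $N+1-d$. These borders are distinct for distinct $d$, and nonempty and proper since $1\le d\le N<|w|$. Hence $\mathrm{bor}(w)\ge|\Delta|+1$, the $+1$ coming from the period $N$, i.e.\ the border $w_0=w_N$ forced by the edge $\{0,N\}$.

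We now split into two cases. If $w$ is constant, $G_\Delta$ is connected and $r(\Delta)=N\ge\min(2|\Delta|+1,N)$. Otherwise Theorem~\ref{thm:border} gives
\[
  r(\Delta)=|w|-\mathrm{sym}(w)\ge2\,\mathrm{bor}(w)-1\ge2(|\Delta|+1)-1=2|\Delta|+1 .
\]
Either way the claim holds.

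The only point needing care is the translation between edges and periods. A period $p$ requires $w_i=w_{i+p}$ for all $0\le i<|w|-p$, i.e.\ for $0\le i\le N-p$. This is exactly the statement that all edges $\{i,i+p\}$ of length $p$ lie within components, which holds because all edges of length $p$ are present in $G_\Delta$. Counting borders by their lengths $N+1-d$ makes their distinctness immediate. I do not expect a real obstacle; the work was done in Theorem~\ref{thm:border}, and the lemma is essentially its corollary once the edge $\{0,N\}$ is included to supply the extra border.
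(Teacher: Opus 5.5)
Your proposal is correct and follows the paper's proof essentially verbatim: both encode the components of $G_\Delta$ as a word $w$ of length $N+1$, read off $|\Delta|+1$ borders of distinct lengths (including the length-$1$ border $w_0=w_N$ from the edge $\{0,N\}$), and apply Theorem~\ref{thm:border}, handling the constant-word case separately with $r(\Delta)=N$. Your check that the borders are nonempty and proper, via $1\le d\le N<|w|$, is the same justification the paper gives when it places their lengths in $\{1,\dots,N\}$.
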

\begin{proof}
Let $k=|\Delta|$ and let $w=w_0\ldots w_N$ be the word of component names, so $r(\Delta)=(N+1)-\mathrm{sym}(w)$. Each $d\in\Delta$ is a period of $w$ and gives a border of length $N+1-d\in\{2,\dots,N\}$; the edge $\{0,N\}$ gives $w_0=w_N$, a border of length $1$. These $k+1$ borders have distinct lengths, so $\mathrm{bor}(w)\ge k+1$.
If $w$ is not constant, Theorem~\ref{thm:border} gives $r(\Delta)\ge2(k+1)-1=2k+1$. If $w$ is constant, $G_\Delta$ is connected and $r(\Delta)=N$.
\end{proof}

\begin{example}\label{ex:rank}
For $N=6$ and $\Delta=\{2,4\}$ the even vertices form one component and the odd vertices another; $w=ABABABA$, with borders $A$, $ABA$, $ABABA$. Here $\mathrm{sym}(w)=2$, $\mathrm{bor}(w)=3$, and $r(\{2,4\})=5=\min(2\cdot2+1,6)$: Both Theorem~\ref{thm:border} and Lemma~\ref{lem:rank} hold with equality.
\end{example}

\begin{corollary}\label{cor:rank}
In the contracted graph, at least $\min(2|\Delta|,m)$ edges with lengths in $\Delta$ can be chosen without forming a cycle.
\end{corollary}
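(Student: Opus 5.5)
The plan is to transfer Lemma~\ref{lem:rank} from $K_{N+1}$ to the contracted graph by matroid (graphic-rank) considerations. Let $\Delta\subseteq\{1,\dots,m\}$ and let $G_\Delta$ be as in Section~\ref{ssec:word}. By Lemma~\ref{lem:rank}, $G_\Delta$ has a spanning forest with $r(\Delta)\ge\min(2|\Delta|+1,N)$ edges. Since the edge $\{0,N\}$ is itself acyclic, we can extend it to such a spanning forest; we choose a spanning forest $T$ of $G_\Delta$ that contains $\{0,N\}$.

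Next we delete $\{0,N\}$ from $T$ and contract $0$ and $N$ into $g$. The remaining $|T|-1$ edges all have lengths in $\Delta$, since $\{0,N\}$ was the only edge of length $N$. We claim they form no cycle in the contracted graph. A cycle there, including a pair of parallel edges at $g$, lifts either to a cycle of $K_{N+1}$ or to a path from $0$ to $N$ among edges of $T\setminus\{\{0,N\}\}$. In the first case $T$ itself contains a cycle. In the second case the path together with $\{0,N\}$ is a cycle in $T$. Both contradict that $T$ is a forest.

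Hence the contracted graph contains at least $r(\Delta)-1\ge\min(2|\Delta|,m)$ acyclic edges with lengths in $\Delta$, using $N-1=m$. The only point needing care is the cycle-lifting argument, in particular the multigraph case of two parallel edges $\{0,k\}$ and $\{k,N\}$. That case lifts to the path $0,k,N$, so it falls under the second case and is handled by the same argument.
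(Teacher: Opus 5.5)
Your proof is correct and takes essentially the same route as the paper. Both arguments view the edges of the contracted graph with lengths in $\Delta$ as $G_\Delta$ with the edge $\{0,N\}$ contracted, and then apply Lemma~\ref{lem:rank} to get $r(\Delta)-1\ge\min(2|\Delta|,m)$. The paper argues by counting components (identifying the already-joined vertices $0$ and $N$ keeps the component count and removes one vertex), whereas you exhibit a spanning forest through $\{0,N\}$ and check by lifting cycles, including the parallel-edge case, that its contraction stays acyclic.
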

\begin{proof}
The edges with lengths in $\Delta$ are the edges of $G_\Delta$ with $\{0,N\}$ removed and $0$, $N$ identified. Since $0$ and $N$ are joined in $G_\Delta$, the identification does not change the number of components and removes one vertex; so its spanning forests (maximal acyclic subgraphs) have $r(\Delta)-1$ edges. Apply Lemma~\ref{lem:rank}.
\end{proof}

\subsection{Augmentation}\label{ssec:augment}
Recall $D=\lceil\log_2m\rceil$. Below, $F$ is always a forest of the contracted graph (an acyclic set of edges, not necessarily maximal) with at most one edge of each length. A length $d$ is \emph{missing} if $F$ has no edge of length $d$, and \emph{directly insertable} if some edge of length $d$ not in $F$ joins two components of $F$.
For an edge $e\notin F$ of length $d$ whose ends lie in the same component of $F$, let $P_F(e)$ be the path in $F$ joining its ends; each edge of length $d'$ on this path gives a \emph{relation} $d\to d'$.

\begin{lemma}[Augmentation]\label{lem:augment}
Suppose $|F|<m$. Then there exist $s\le D$, edges $e_0,\dots,e_s\notin F$ and edges $f_1,\dots,f_s\in F$ such that
\[
  F':=\bigl(F\cup\{e_0,\dots,e_s\}\bigr)\setminus\{f_1,\dots,f_s\}
\]
is a forest with at most one edge of each length and $|F'|=|F|+1$.
\end{lemma}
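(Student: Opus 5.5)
We plan a breadth-first search over lengths, in the spirit of matroid intersection: $F$ lives in the graphic matroid of the contracted graph, and the rainbow condition is a partition matroid on lengths. Fix a missing length $d_0$ (one exists since $|F|<m$). If $d_0$ is directly insertable we take $s=0$ and $e_0$ an edge of length $d_0$ joining two components. Otherwise we build levels $\Delta_0=\{d_0\}\subseteq\Delta_1\subseteq\cdots$, where $\Delta_{i+1}$ is $\Delta_i$ together with every $d'$ such that $d\to d'$ is a relation for some $d\in\Delta_i$, that is, some edge $e\notin F$ of a length in $\Delta_i$ has an edge of length $d'$ on its path $P_F(e)$. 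We stop at the first level containing a directly insertable length. For each new length we record a predecessor: the length $d$ and edge $e$ realizing $d\to d'$.

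\emph{Levels grow.} Suppose no length of $\Delta_i$ is directly insertable. Then every edge with length in $\Delta_i$ lies inside a component of $F$ and closes a path of $F$. Let $F_i$ be the edges of $F$ with lengths in $\Delta_{i+1}$. By construction every edge with length in $\Delta_i$ has its ends joined in $F_i$. So the edges of lengths $\Delta_i$ lie in the span of $F_i$, and Corollary~\ref{cor:rank} gives $|F_i|\ge\min(2|\Delta_i|,m)$. Since $F$ has at most one edge per length, $|\Delta_{i+1}|\ge|F_i|+[d_0\text{ missing}]$. More precisely, $\Delta_{i+1}$ contains the missing length $d_0$ as well as the $|F_i|$ distinct lengths of $F_i$, all present in $F$. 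Hence $|\Delta_{i+1}|\ge\min(2|\Delta_i|+1,m)$. Starting from $1$, after $i$ levels $|\Delta_i|\ge\min(2^{i+1}-1,m)$. Once $\Delta_i$ contains all lengths it contains a missing length other than\ldots{} Here a little care is needed. If $|\Delta_i|=m$, then the edges of all lengths have rank $m$ in the contracted graph (it is connected), but $F_i\subseteq F$ has fewer than $m$ edges, a contradiction. So some length is directly insertable by level $D=\lceil\log_2m\rceil$.

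\emph{Extracting a short chain.} Take a directly insertable length found at the earliest level $s\le D$, with a shortest predecessor chain $d_0\to d_1\to\cdots\to d_s$ realized by edges. Let $e_0$ be the edge of length $d_0$ whose path in $F$ contains the edge $f_1$ of length $d_1$. Let $e_1$ be the edge of length $d_1$ whose path contains $f_2$, and so on. Finally $e_s$ is an edge of length $d_s$ joining two components of $F$. Lengths are balanced: we add lengths $d_0,\dots,d_s$ and remove $d_1,\dots,d_s$, so $F'$ has at most one edge per length and $|F'|=|F|+1$.

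\emph{The main obstacle} is acyclicity of $F'$. The sequential exchanges interact, because removing $f_j$ may break the cycle that $e_{j'}$ was supposed to close. The standard remedy is the shortest-path property. Since the chain is shortest, no relation $d_i\to d_j$ with $j>i+1$ exists, and no $d_i$ with $i<s$ is directly insertable. Then the exchange is acyclic by the usual matroid-intersection argument (the "unique perfect matching" lemma for exchange graphs).

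We perform the swaps in reverse order. First add $e_s$, which joins two components. Then for $j=s-1,\dots,0$ add $e_j$ and remove $f_{j+1}$. We verify by induction that the fundamental cycle of $e_j$ in the current forest still contains $f_{j+1}$. This holds because the earlier modifications only removed edges $f_{i}$ with $i>j+1$, which did not lie on $P_F(e_j)$ (no shortcut relation). They also added edges that do not create alternative routes; this is where the shortest-chain condition must be used carefully, and we expect this verification to take most of the work.
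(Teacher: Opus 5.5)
Your proposal is the paper's proof. It uses the same level search over lengths, the same growth estimate via Corollary~\ref{cor:rank}, the same shortest chain, and the same reverse-order induction, which is exactly the paper's sequence $F^{(s)},F^{(s-1)},\dots,F^{(0)}=F'$. Two of your choices are harmless variations: starting from a single missing length $d_0$ (the paper puts all missing lengths in level~$0$), and the slightly sharper bound $|\Delta_{i+1}|\ge 2|\Delta_i|+1$. Your BFS predecessor chain from $d_0$ still has no shortcuts, because each $d_j$ first appears at level $j$.

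The step you flag as the main obstacle, and leave unfinished, is not hard. By the induction hypothesis the current set $F^{(j+1)}$ is a forest. You already showed that $P_F(e_j)$ avoids the removed edges $f_{j+2},\dots,f_s$. So $P_F(e_j)$ is a path in $F^{(j+1)}$ joining the ends of $e_j$, and hence the \emph{unique} such path. Edges added earlier cannot create an alternative route inside a forest. Therefore the only cycle of $F^{(j+1)}\cup\{e_j\}$ is $P_F(e_j)$ together with $e_j$, it contains $f_{j+1}$, and removing $f_{j+1}$ leaves a forest.

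Nothing beyond two facts is needed: $e_s$ joins two components of $F$, and there is no shortcut relation $d_i\to d_j$ with $j>i+1$. Neither an appeal to the matroid-intersection exchange lemma nor the condition that no $d_i$ with $i<s$ is directly insertable is required.
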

\begin{proof}
\emph{Finding a chain.} Search by levels: Level $0$ consists of the missing lengths (there is at least one since $|F|<m$), and level $j+1$ consists of the lengths not seen before that are reached from level $j$ by one relation. Let $\Delta_j$ be the union of levels $0,\dots,j$, and $F_j$ the set of edges of $F$ with lengths in $\Delta_j$.

Suppose $\Delta_j$ contains no directly insertable length. Then both ends of every edge with length in $\Delta_j$ lie in one component of $F_{j+1}$: For edges in $F$ this is clear, and for $e\notin F$ the ends lie in one component of $F$ and all lengths on $P_F(e)$ belong to $\Delta_{j+1}$. So an acyclic set of edges with lengths in $\Delta_j$ has at most $n_C-1$ edges inside each component $C$ of $F_{j+1}$ with $n_C$ vertices, hence at most $|F_{j+1}|\le|\Delta_{j+1}|$ edges in all. By Corollary~\ref{cor:rank} such a set can have $\min(2|\Delta_j|,m)$ edges; therefore
\[
  |\Delta_{j+1}|\ \ge\ \min(2|\Delta_j|,\,m).
\]
If none of $\Delta_0,\dots,\Delta_{D-1}$ contained a directly insertable length, then $|\Delta_D|\ge\min(2^D,m)=m$, so $\Delta_D$ would contain all lengths. But $F$ is disconnected (as $|F|<m$) while the contracted graph is connected, so some edge joins two components of $F$; its length is directly insertable and lies in $\Delta_D$.
So some $\Delta_j$ with $j\le D$ contains a directly insertable length. Take a shortest chain of relations $d_0\to d_1\to\dots\to d_s$ from a missing length $d_0$ to a directly insertable length $d_s$. Then $s\le D$, the $d_i$ are distinct, and for $0\le i<s$ there is an edge $e_i\notin F$ of length $d_i$ such that $P_F(e_i)$ contains an edge $f_{i+1}\in F$ of length $d_{i+1}$; and $e_s\notin F$ is an edge of length $d_s$ joining two components of $F$. As the chain is shortest, $f_j\notin P_F(e_i)$ for $j>i+1$.

\emph{The result.} $F'$ loses one edge of each length $d_1,\dots,d_s$ and gains one edge of each length $d_0,\dots,d_s$; so only the missing length $d_0$ is added, each length still occurs at most once, and $|F'|=|F|+1$.
To see that $F'$ is acyclic, let $F^{(j)}:=\bigl(F\cup\{e_j,\dots,e_s\}\bigr)\setminus\{f_{j+1},\dots,f_s\}$ for $j=s,s-1,\dots,0$, so that $F^{(s)}=F\cup\{e_s\}$ and $F^{(0)}=F'$. $F^{(s)}$ is a forest because $e_s$ joins two components of $F$.
If $F^{(j+1)}$ is a forest ($j<s$), the path $P_F(e_j)$ lies in $F$ and avoids $f_{j+2},\dots,f_s$, so it lies in $F^{(j+1)}$ and contains $f_{j+1}$; it is the unique path in $F^{(j+1)}$ joining the ends of $e_j$. Replacing $f_{j+1}$ by $e_j$ on this path leaves a forest $F^{(j)}$. By induction $F'$ is a forest. (The intermediate sets $F^{(j)}$ may contain two edges of the same length; this does not matter.)
\end{proof}

\begin{lemma}[Repair]\label{lem:repair}
For every rainbow map $f$ there is a rainbow spanning tree $Y$ with $\ |X(f)\triangle Y|\le2(D+1)\,c(f)$.
\end{lemma}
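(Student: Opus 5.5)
Start from the edge set $X(f)$ in the contracted graph. Its component containing $g$ is a tree, and each of the other $c:=c(f)$ components contains exactly one cycle (the cycles of $f$). Delete one edge from each cycle to obtain $F_0$. Then $F_0$ is a forest with $|F_0|=m-c$ edges, it still has at most one edge of each length, and exactly $c$ lengths are missing. We also have $|X(f)\triangle F_0|=c$. If $c=0$, then $X(f)$ is already a rainbow spanning tree and we take $Y=X(f)$.

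Next apply Lemma~\ref{lem:augment} $c$ times. Each application starts from a forest $F_i$ with $|F_i|<m$ and produces $F_{i+1}$, again a forest with at most one edge of each length, with $|F_{i+1}|=|F_i|+1$. It adds $s+1$ edges $e_0,\dots,e_s$ and removes $s$ edges $f_1,\dots,f_s$ with $s\le D$, so $|F_i\triangle F_{i+1}|\le 2s+1\le 2D+1$. After $c$ steps we reach $F_c$ with $m$ edges. These are $m$ edges of distinct lengths in $\{1,\dots,m\}$, so $F_c$ has exactly one edge of each length. It is an acyclic set of $m$ edges on the $m+1$ vertices $g,1,\dots,m$, hence a spanning tree. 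So $Y:=F_c$ is a rainbow spanning tree.

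By the triangle inequality for symmetric differences,
\[
|X(f)\triangle Y|\le |X(f)\triangle F_0|+\sum_{i=0}^{c-1}|F_i\triangle F_{i+1}|\le c+c(2D+1)=2(D+1)c .
\]
All the real work is in Lemma~\ref{lem:augment}, which we may assume. The only points to check are the following.
\begin{itemize}
\item Removing one edge per cycle gives a forest. Each non-$g$ component is unicyclic, so deleting one edge of its cycle leaves a tree.
\item The lemma's hypothesis $|F_i|<m$ holds at every step, since $|F_i|=m-c+i$ and $i<c$.
\item Counting $|F_i\triangle F_{i+1}|$ needs care, because an edge $e_j$ might coincide with one of the removed $f$'s. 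It cannot, since $e_j\notin F_i$ and $f_j\in F_i$. So the symmetric difference is at most $(s+1)+s=2s+1$ edges.
\end{itemize}
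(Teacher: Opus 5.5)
Your proposal is correct and is the paper's argument: delete one edge from each cycle (this costs $c$ changes), apply Lemma~\ref{lem:augment} $c$ times at a cost of at most $2D+1$ changes each, and add the costs to get $c+c(2D+1)=2(D+1)c$. The checks you list (that the deletions leave a forest, that $|F_i|<m$ at every step, and the bound $2s+1$ on each symmetric difference) are the details the paper leaves implicit.
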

\begin{proof}
Delete an arbitrary edge from each cycle; this gives a forest $F_0$ with at most one edge of each length and $|F_0|=m-c$. Applying Lemma~\ref{lem:augment} $c$ times gives a forest with $m$ edges, one of each length, that is, a rainbow spanning tree $Y$. The deletions change $c$ edges and each augmentation changes at most $2D+1$, so $|X(f)\triangle Y|\le c+c(2D+1)=2(D+1)c$.
\end{proof}
\noindent Two remarks. First, we only need such a $Y$ to \emph{exist}; any choice of $Y(f)$ works in Section~\ref{sec:count}, and no explicit construction is needed.
Second, the repair does not proceed one cycle at a time. Each augmentation adds \emph{some} missing length, and the set of missing lengths shrinks by exactly one; but its exchanges may replace other edges by edges of the same length, including edges that were on no cycle and edges that an earlier augmentation added. Only the total number of changes is counted.

\begin{example}\label{ex:repair6}
For the rainbow map of Figure~\ref{fig:config}(b), with cycle $1\to3\to4\to1$, delete the edge $\{3,4\}$ of length $1$. The components are $\{1,3,4\}$ and $\{g,2,5\}$, and the edge $\{0,1\}$ of length $1$ joins them. Adding it gives the tree of Figure~\ref{fig:config}(a), with two edges changed.
\end{example}

%======================================================================
\section{Counting: proof of Theorem~\ref{thm:compare}}\label{sec:count}
For each rainbow map $f$ choose a rainbow spanning tree $Y(f)$ as in Lemma~\ref{lem:repair}, so that $Z(f):=X(f)\triangle Y(f)$ is small. Different maps may give the same tree, but $f$ can be recovered from the tree together with the set $Z(f)$ and the orientation of each cycle of $f$. The number of choices of $Z$ and of orientations is $e^{O(\log^4N)}$, since $Z$ is small and cycles are few. Since Theorem~\ref{thm:cycles} controls only the average number of cycles, we discard the maps with more than $2B_N$ cycles, which by Markov's inequality form less than half of all maps.

\begin{proof}[Proof of Theorem~\ref{thm:compare}]
The left inequality is Lemma~\ref{lem:inj}. Let $c_\ast:=\lfloor2B_N\rfloor$ and $\mathcal G:=\{f:c(f)\le c_\ast\}$. By Theorem~\ref{thm:cycles} and Markov's inequality, $\#\{f:c(f)\ge c_\ast+1\}\le W(N)\,B_N/(c_\ast+1)<\frac12W(N)$, so $|\mathcal G|>\frac12W(N)$.

For $f\in\mathcal G$ we have $Z(f)\subseteq E$ and $|Z(f)|\le Q:=2(D+1)c_\ast$. \emph{The map $f$ is determined by $Y(f)$, $Z(f)$ and an orientation of each cycle:} The set $Y(f)\triangle Z(f)=X(f)$ is the edge set of $f$ (as a subset of $E$, so edges keep their identities). Each interior vertex leaves along exactly one of these edges and distinct vertices use distinct edges, so in the component containing $g$ every edge is oriented towards $g$, and in every other component the edges off the cycle are oriented towards the cycle; only the orientation of each cycle remains. An edge together with its orientation determines the vertex $k$ it leaves and $f(k)$.
There are at most $\tau(N)$ choices for $Y(f)$, at most $\sum_{j\le Q}\binom{|E|}j\le(|E|+1)^{Q}$ choices for $Z(f)$ (write a set of at most $Q$ edges as a sequence of length $Q$, padded with a blank symbol), and at most $2^{c_\ast}$ orientations. Hence
\[
  \tfrac12W(N)\ <\ |\mathcal G|\ \le\ \tau(N)\,(|E|+1)^{Q}\,2^{c_\ast},
\]
and, using $c_\ast\le2B_N$,
\[
  \log W(N)-\log\tau(N)\ \le\ \bigl[4(D+1)\log(|E|+1)+2\log2\bigr]B_N+\log2 .
\]
\emph{The constant.} Note that $\log N\le\Lambda-1$. From $D+1\le\frac{\log N}{\log2}+2$ and $|E|+1=\frac{N(N+1)}2\le N^2$, the bracket is at most $\frac8{\log2}\log^2N+16\log N+1.39\le11.55\Lambda^2$.
From $\mathcal H_R\le1+\log\frac N2\le\Lambda-0.69$ and $\lfloor\log_2m\rfloor\le\frac{\log N}{\log2}$ we get $B_N\le8(\Lambda-0.69)^2+23.09(\Lambda-1)\le8\Lambda^2+12.05\Lambda$.
The product is at most $92.4\Lambda^4+139.2\Lambda^3$. For $N\ge3$, $\Lambda\ge2.38$, so $139.2\Lambda^3\le58.5\Lambda^4$ and $\log2\le0.03\Lambda^4$; the total is at most $151\Lambda^4$.
\end{proof}
\noindent The four logarithms in $\log^4N$ have separate origins: two in $B_N$ (one of them from summing $1/t$ over the sizes $t$ of cycles), one from the $2(D+1)$ changes per cycle, and one from the $\log|E|$ needed to name an edge.

%======================================================================
\section{Formulas for the Whitty numbers}\label{sec:perm}
This section does not involve trees; it determines the size of $W(N)$ and proves Theorem~\ref{thm:perm}, starting from the Motzkin paths of Section~\ref{sec:struct}.
We write the path transitions as a difference operator $T$, so that $W(N)$ is the value of a power of $T$ at $x=1$ (Section~\ref{ssec:operator}). For a special family of polynomials $p_k$, the value of $p_k(T)$ at $x=1$ is a positive lattice path count that turns out to be a factorial (Section~\ref{ssec:bridge}). Expanding powers of $T$ in the $p_k(T)$ gives an alternating sum with factorials and an exact algorithm with $O(N^2)$ integer operations (Section~\ref{ssec:exact}).
The alternating sum is hard to estimate, since its terms are much larger than its value; we write it exactly as a moment of a positive measure, a continuous dual Hahn weight, which performs all cancellations at once (Section~\ref{ssec:moment}). The density of this measure lies between two functions of the form polynomial${}\times e^{-\pi x}$, and the constant $2/\pi$ comes from this $e^{-\pi x}$ (Section~\ref{ssec:asym}).
Everything up to the moment representation is combinatorial; the analysis uses one integral of Wilson (Fact~\ref{fact:W}) and standard identities for the Gamma function.

\subsection{The operator}\label{ssec:operator}
For functions $F$ on $\{0,1,2,\dots\}$ with $F(0)=0$ let
\[
  (TF)(x):=x^2\bigl(F(x+1)+2F(x)+F(x-1)\bigr)\qquad(x\ge1),\qquad (TF)(0):=0 .
\]
\begin{lemma}[Operator form]\label{lem:operator}
For $q\ge1$, $\ W(2q+1)=4\,(T^{q-1}x^3)(1)$ and $\ W(2q+2)=2\,(T^qx^2)(1)$.
\end{lemma}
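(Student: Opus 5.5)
The plan is to read Theorem~\ref{thm:motzkin} as a transfer-operator statement and identify $T$ with the step operator acting on the final height. Define $G_r(x)$ to be the total weight of all paths of $r$ steps that start at height $1$ and end at height $x$. The weights are $x(x+1)$ for an up step, $2x^2$ for a level step (the factor $2$ counts the two choices) and $x(x-1)$ for a down step. These give the forward recursion
\[
G_{r+1}(y)=(y-1)y\,G_r(y-1)+2y^2G_r(y)+(y+1)y\,G_r(y+1),
\]
with $G_0=[x=1]$. The operator $T$ is the adjoint of this recursion, and we work with it in the following way. Let $H_r(x)$ be the total weight of the $r$ remaining steps started from height $x$, with a prescribed terminal function $\phi$. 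Then
\[
H_{r}(x)=x(x+1)H_{r-1}(x+1)+2x^2H_{r-1}(x)+x(x-1)H_{r-1}(x-1),
\]
and $W(N)=H_{R'}(1)$. Here $\phi\equiv1$ for $N$ odd and $\phi(x)=2x$ for $N$ even.

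The key step is to substitute $H_r(x)=F_r(x)/x$, defined for $x\ge1$, with $F_r(0)=0$. Then
\[
x(x+1)\frac{F(x+1)}{x+1}=xF(x+1),\qquad 2x^2\frac{F(x)}{x}=2xF(x),\qquad x(x-1)\frac{F(x-1)}{x-1}=xF(x-1).
\]
This turns the recursion into $F_r=x\cdot x\,(F_{r-1}(x+1)+2F_{r-1}(x)+F_{r-1}(x-1))=TF_{r-1}$. The boundary is consistent: a down step from height $1$ has weight $0$, matching $F(0)=0$ (the weight vanishes there anyway, so no care is needed beyond noting $0\cdot H=0$ and $F(0)=0$). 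Hence $W(N)=F_{R'}(1)=(T^{R'}F_0)(1)$, with $F_0(x)=x\phi(x)$. For even $N=2q+2$ we have $R'=q$ and $F_0=2x^2$, which gives $W=2(T^qx^2)(1)$ directly.

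For odd $N=2q+1$ we have $R'=q$ and $F_0=x$. We apply $T$ once and check that $Tx=x^2\bigl((x+1)+2x+(x-1)\bigr)=4x^3$ for $x\ge1$. At $x=1$ this uses $F_0(0)=0$, which holds since $F_0(x)=x$. Therefore $W(2q+1)=(T^{q-1}\,4x^3)(1)=4(T^{q-1}x^3)(1)$ by linearity.

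The only point needing attention is that $T$ is defined on functions vanishing at $0$ while $x^3$ etc.\ are polynomials. I would verify that all iterates are evaluated only on $x\ge1$ (where they agree with the path sums) and that the value at $0$ is always reset to $0$. Both hold by the definition of $T$ and because $x^2$ and $x^3$ vanish at $0$. I expect no real obstacle beyond getting the index $R'$ and the midpoint factor right, and I would check both against $W(5)=40$: $4(Tx^3)(1)=4\cdot1\cdot(8+2+0)=40$.
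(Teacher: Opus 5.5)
Your proof is correct and follows the paper's argument. The substitution $F_r=xH_r$ is exactly the paper's intertwining relation $x\,(Jf)(x)=T(xf)(x)$ for the path transfer operator $J$, iterated $R'=q$ times from the terminal function $1$ or $2x$, followed by $Tx=4x^3$ in the odd case (your forward recursion for $G_r$ and the remark about $T$ being its adjoint are never used and can be dropped).
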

\begin{proof}
The transfer operator of the paths of Theorem~\ref{thm:motzkin} is $(Jf)(x)=x(x+1)f(x+1)+2x^2f(x)+x(x-1)f(x-1)$: The total weight of $q$-step paths from height $x$, with final weight $f$, is $(J^qf)(x)$. So $W(2q+1)=(J^q1)(1)$ and $W(2q+2)=(J^q\,2x)(1)$.
Expanding shows $x\,(Jf)(x)=T(xf)(x)$, hence $x\,(J^qf)(x)=(T^q(xf))(x)$, and at $x=1$ we get $W(2q+1)=(T^qx)(1)$ and $W(2q+2)=2(T^qx^2)(1)$. Finally $Tx=x^2\bigl((x+1)+2x+(x-1)\bigr)=4x^3$.
\end{proof}

\subsection{The bridge lemma}\label{ssec:bridge}
For the two parities let
\[
  \lambda^{\mathrm o}_j:=(2j+2)(2j+1)\quad(N\text{ odd}),\qquad\lambda^{\mathrm e}_j:=(2j+1)(2j)\quad(N\text{ even}),\qquad p_k(\omega):=\prod_{j<k}(\omega+\lambda_j),
\]
with $p_0=1$ and $\lambda$ taken from the relevant parity; $p_k(T)=\prod_{j<k}(T+\lambda_j)$ acts on functions.

\begin{lemma}[Bridge lemma]\label{thm:bridge}
For every $F$ with $F(0)=0$ and every $k\ge0$,
\[
  \bigl(p_k(T)F\bigr)(1)=(k!)^2\sum_{y=1}^{k+1}\beta_k(y)\,F(y),\qquad
  \beta_k(y)=\begin{cases}\binom{2k+2}{k+1-y},&N\text{ odd},\\[2pt] \frac{y}{k+1}\binom{2k+2}{k+1-y},&N\text{ even}.\end{cases}
\]
\end{lemma}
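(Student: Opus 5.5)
The plan is to prove the bridge lemma by induction on $k$, working with the linear functional $F\mapsto(p_k(T)F)(1)$ instead of with the functions $p_k(T)F$ themselves. For each $k$ write
\[
  \bigl(p_k(T)F\bigr)(1)=\sum_{y\ge1}\mu_k(y)\,F(y)
\]
for a finitely supported weight $\mu_k$ on $\{1,2,\dots\}$. The values $F(0)$ never matter, because $F(0)=0$ and $T$ preserves this condition. For the base case, $p_0=1$ gives $\mu_0=[y=1]$, and $(0!)^2\beta_0(1)=\binom20=1$ in both parities.

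For the recursion, use $p_{k+1}(T)=(T+\lambda_k)\,p_k(T)$ and note that the factors commute, so $p_{k+1}(T)=p_k(T)(T+\lambda_k)$. Applying the functional of $\mu_k$ to $(T+\lambda_k)F$ gives $\mu_{k+1}=T^{\ast}\mu_k+\lambda_k\mu_k$, where $T^{\ast}$ is the adjoint of $T$. Reading off the coefficient of $F(y)$ in $\sum_x\mu_k(x)\,x^2\bigl(F(x+1)+2F(x)+F(x-1)\bigr)$ gives
\[
  \mu_{k+1}(y)=(y-1)^2\mu_k(y-1)+(2y^2+\lambda_k)\mu_k(y)+(y+1)^2\mu_k(y+1)\qquad(y\ge1).
\]
The term coming from $x=1$ and $F(0)$ drops out, and the factor $(y-1)^2$ kills any contribution from $y=0$. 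This matters in the odd case, where $\binom{2k+2}{k+1}$, the value of $\beta_k$ at $y=0$, would otherwise be nonzero.

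Substituting $\mu_k=(k!)^2\beta_k$ and dividing by $(k!)^2$, it remains to verify the identity
\[
  (k+1)^2\beta_{k+1}(y)=(y-1)^2\beta_k(y-1)+(2y^2+\lambda_k)\beta_k(y)+(y+1)^2\beta_k(y+1)
\]
for all $y\ge1$, together with the support condition $\beta_k(y)=0$ for $y>k+1$. The support condition holds automatically, since the binomial coefficient vanishes there.

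This verification is the main, though routine, obstacle. In the odd case, put $n=2k+2$ and $a=k+1-y$. Divide both sides by $\binom na$ and use the ratios
\[
  \binom n{a+1}\Big/\binom na=\frac{n-a}{a+1},\qquad \binom n{a-1}\Big/\binom na=\frac a{n-a+1},\qquad \binom{n+2}{a+1}\Big/\binom na=\frac{(n+2)(n+1)}{(a+1)(n+1-a)}.
\]
This turns the identity into a polynomial identity in $k$ and $y$. After clearing the denominators $(a+1)(n+1-a)=(k+2-y)(k+2+y)$, which is even in $y$, I would check it by expansion. The choice $\lambda_k=(2k+2)(2k+1)$ is exactly what makes the constant terms match.

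In the even case I would proceed the same way, with the extra factor $y/(k+1)$ in $\beta_k$. Its shifts $(y\pm1)/(k+1)$ multiply the neighbouring terms, and $\lambda_k=(2k+1)(2k)$ compensates for the changed weights. As a consistency check, I would first test the identity at small values: $k=0$ against $T$ applied directly, and $y=k+2$, where only the term $(y-1)^2\beta_k(k+1)$ survives and gives $(k+1)^2\beta_k(k+1)$. That leaves the identity for $1\le y\le k+1$ as the genuine algebraic check.
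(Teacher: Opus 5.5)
Your proposal is correct and is essentially the paper's proof: the same induction through the adjoint recurrence $\mu_{k+1}(y)=(y-1)^2\mu_k(y-1)+(2y^2+\lambda_k)\mu_k(y)+(y+1)^2\mu_k(y+1)$. It also uses the same division by $\binom{2k+2}{k+1-y}$ followed by multiplication by $(k+2)^2-y^2$, and the same separate check at $y=k+2$.

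The paper carries out the expansion you leave pending. With $n=k+1$, the right side in the odd case reduces to $2n^2(n+1)(2n+1)$, and in the even case both sides become $2n^3(2n+1)y$ after multiplying by $n\bigl((n+1)^2-y^2\bigr)$. Your remark about $\lambda_k$ is slightly off: $\lambda_k$ is fixed by cancelling the $y^2$ coefficient (the $y^3$ coefficient in the even case), and the remaining coefficient then matches on its own.
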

\noindent The coefficients are positive: The number $\binom{2k+2}{k+1-y}$ counts $\pm1$ walks of length $2k+2$ from $0$ to $2y$, and $\frac y{k+1}\binom{2k+2}{k+1-y}$ counts those that do not return to $0$ (a ballot number).
\begin{proof}
Induction on $k$; for $k=0$ both sides are $F(1)$. For a linear functional $\mathcal L(F)=\sum_{y\ge1}a(y)F(y)$ (with $a(0)=0$), the definition of $T$ and $F(0)=0$ give
\[
  \mathcal L(TF)=\sum_{y\ge1}F(y)\bigl[(y-1)^2a(y-1)+2y^2a(y)+(y+1)^2a(y+1)\bigr].
\]
So it suffices to verify, with $n=k+1$, the recurrence $n^2\beta_{n}(y)=(y-1)^2\beta_k(y-1)+\bigl(2y^2+\lambda_k\bigr)\beta_k(y)+(y+1)^2\beta_k(y+1)$ (the factor $n^2$ comes from $(k!)^2\to(n!)^2$).
For odd $N$, $\lambda_k=2n(2n-1)$, and we need
\[
  n^2\tbinom{2n+2}{n+1-y}=(y-1)^2\tbinom{2n}{n+1-y}+\bigl(2y^2+2n(2n-1)\bigr)\tbinom{2n}{n-y}+(y+1)^2\tbinom{2n}{n-1-y}.
\]
For $1\le y\le n$, divide by $\binom{2n}{n-y}$ and multiply by $(n+1)^2-y^2$. The left side becomes $2n^2(n+1)(2n+1)$. On the right, the first and third terms give $2y^4+(2n^2-6n-2)y^2+2n^2+2n$ and the middle term gives $-2y^4-(2n^2-6n-2)y^2+4n^4+6n^3-2n$; the sum is $4n^4+6n^3+2n^2=2n^2(n+1)(2n+1)$.
For $y=n+1$ only the first term survives and both sides equal $n^2$; for $y>n+1$ all terms vanish.
For even $N$, $\lambda_k=(2n-1)(2n-2)$, and the same computation with the extra factor $y/n$ gives $2n^3(2n+1)y$ on both sides after multiplication by $n((n+1)^2-y^2)$ for $1\le y\le n$; for $y=n+1$ both sides equal $n^2$ again.
\end{proof}

\begin{corollary}\label{cor:bridge}
For odd $N$, $(p_k(T)x^3)(1)=\frac{(2k+2)!}2$; for even $N$, $(p_k(T)x^2)(1)=(2k+1)!$.
\end{corollary}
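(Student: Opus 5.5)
We apply the Bridge lemma (Lemma~\ref{thm:bridge}) with $F=x^3$ in the odd case and $F=x^2$ in the even case. Both vanish at $0$, so the lemma applies, and it reduces each identity to a finite binomial sum:
\[
  (p_k(T)x^3)(1)=(k!)^2\sum_{y=1}^{k+1}\binom{2k+2}{k+1-y}y^3,\qquad
  (p_k(T)x^2)(1)=(k!)^2\sum_{y=1}^{k+1}\frac{y^3}{k+1}\binom{2k+2}{k+1-y}.
\]
Both sums involve the same quantity. With $n=k+1$, set $S_n:=\sum_{y=1}^{n}y^3\binom{2n}{n-y}$. The odd claim then becomes $(k!)^2S_n=\frac{(2n)!}{2}$, and the even claim becomes $(k!)^2S_n/n=(2n-1)!$. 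Since $(2n)!/2=n\,(2n-1)!$, these are the same statement. So both parts follow from the single identity
\[
  S_n=\frac{(2n)!}{2\,((n-1)!)^2}=\frac{n^2}{2}\binom{2n}{n}.
\]

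To prove this identity, we use the symmetry $\binom{2n}{n-y}=\binom{2n}{n+y}$ and write $j=n+y$, so that $y=j-n$ is the centred variable of a binomial law. Then
\[
  S_n=\tfrac12\sum_{j}|j-n|^3\binom{2n}{j},
\]
which is half the third absolute central moment of $\mathrm{Bin}(2n,\tfrac12)$, scaled by $2^{2n}$. Absolute odd moments of this kind telescope. The key step is the identity
\[
  y\binom{2n}{n-y}=\frac{n}{2}\Bigl[\binom{2n}{n-y}-\binom{2n}{n-y-1}\Bigr]\cdot\frac{2n+1}{n}\cdots,
\]
or more cleanly $\binom{2n}{n-y}\frac{y}{n}=\binom{2n-1}{n-y}-\binom{2n-1}{n-y-1}$, which is the ballot-number fact already noted after Lemma~\ref{thm:bridge}. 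This gives
\[
  S_n=n\sum_{y\ge1}y^2\Bigl[\binom{2n-1}{n-y}-\binom{2n-1}{n-y-1}\Bigr].
\]
Summing by parts turns this into $n\sum_{y\ge1}(2y-1)\binom{2n-1}{n-y}$. The sum $\sum_{y\ge1}\binom{2n-1}{n-y}$ is half of $2^{2n-1}$. The sum $\sum_{y\ge1}y\binom{2n-1}{n-y}$ comes from the same ballot trick with odd length, or from the standard mean absolute deviation $\sum_j|j-m|\binom{2m}{j}=m\binom{2m}{m}$. The powers of $2$ cancel, leaving $\frac{n^2}{2}\binom{2n}{n}$.

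An alternative route is induction on $n$ via Pascal's rule, checking small cases such as $n=1$: $S_1=1=\frac12\binom21$. The main obstacle is keeping this summation by parts clean, in particular avoiding off-by-one errors in the boundary terms at $y=n$. As a safety net, we can instead verify $S_n=\frac{n^2}{2}\binom{2n}{n}$ using the recurrence already established in the proof of Lemma~\ref{thm:bridge}. The identity $\mathcal L(TF)$ with $a=\beta_k$, applied to $F=x^3$, together with $T x^3=x^2\bigl((x+1)^3+2x^3+(x-1)^3\bigr)=4x^5+6x^3$, would give another derivation, but the direct moment computation is more economical.
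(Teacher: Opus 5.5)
Your proposal is correct and makes the same reduction as the paper: both parities come down to $S_n=\sum_{y\ge1}y^3\binom{2n}{n-y}=\frac{n^2}2\binom{2n}n$ with $n=k+1$. You differ in how you prove this identity. The paper telescopes the ballot identity only for the first moment, $\sum_{y\ge1}y\binom{2n}{n-y}=\frac n2\binom{2n}n$. It then handles the cube algebraically: $y^2=n^2-(n-y)(n+y)$ together with $(n-y)(n+y)\binom{2n}{n-y}=2n(2n-1)\binom{2n-2}{n-1-y}$ expresses $S_n$ through first moments at $n$ and $n-1$. You instead use the ballot identity to strip one factor $y$ from $y^3$, then use Abel summation to replace $y^2$ by $2y-1$, arriving at $S_n=n\sum_{y\ge1}(2y-1)\binom{2n-1}{n-y}$. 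Both steps are correct, and the route is about as short as the paper's. However, you never evaluate this last sum. You only assert that ``the powers of 2 cancel'', and the mean-deviation identity you quote is for even length $2m$, so it does not apply directly to $\sum_{y\ge1}y\binom{2n-1}{n-y}$. The quickest finish is Pascal's rule. With $a_y=\binom{2n-1}{n-y}$ one has $\binom{2n}{n-y}=a_y+a_{y+1}$, hence $\sum_{y\ge1}y\binom{2n}{n-y}=\sum_{y\ge1}(2y-1)a_y$. So your sum is exactly the paper's first moment $\frac n2\binom{2n}n$, and $S_n=\frac{n^2}2\binom{2n}n$ follows at once. Equivalently, the odd-length ballot identity $\frac{2y-1}{2n-1}\binom{2n-1}{n-y}=\binom{2n-2}{n-y}-\binom{2n-2}{n-y-1}$ telescopes your sum to $(2n-1)\binom{2n-2}{n-1}$, which is the same number. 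You should also delete the first displayed ``key step'' ending in $\cdots$, which is not a valid identity. The closing remarks about induction and $Tx^3$ play no role and can go too.
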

\begin{proof}
Let $n=k+1$. The ballot identity $\frac yn\binom{2n}{n-y}=\binom{2n-1}{n-y}-\binom{2n-1}{n-y-1}$ telescopes to $\sum_{y\ge1}y\binom{2n}{n-y}=\frac n2\binom{2n}n$. With $y^2=n^2-(n-y)(n+y)$ and $(n-y)(n+y)\binom{2n}{n-y}=2n(2n-1)\binom{2n-2}{n-1-y}$,
\[
  \sum_{y\ge1}y^3\tbinom{2n}{n-y}=n^2\cdot\tfrac n2\tbinom{2n}n-2n(2n-1)\cdot\tfrac{n-1}2\tbinom{2n-2}{n-1}=\tfrac{n^2}2\tbinom{2n}n .
\]
By Lemma~\ref{thm:bridge}, the odd case gives $(k!)^2\cdot\frac{n^2}2\binom{2n}n=\frac{(2k+2)!}2$, and the even case gives $(k!)^2\cdot\frac1n\cdot\frac{n^2}2\binom{2n}n=(2k+1)!$.
\end{proof}

\subsection{An alternating sum and a triangle}\label{ssec:exact}
\begin{lemma}[Triangle]\label{lem:trec}
Let $t_{0,0}=1$, $t_{s,k}=0$ outside $0\le k\le s$, and $t_{s,k}=t_{s-1,k-1}+\lambda_k\,t_{s-1,k}$. Then, for each parity, $\omega^{s}=\sum_{k=0}^{s}(-1)^{s-k}\,t_{s,k}\,p_k(\omega)$ as polynomials in $\omega$.
\end{lemma}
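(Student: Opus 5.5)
The proof is by induction on $s$, using only the defining recurrence $p_{k+1}(\omega)=(\omega+\lambda_k)p_k(\omega)$. The plan is to show that the triangle $t_{s,k}$ records exactly the coefficients produced when multiplying by $\omega$ in the basis $p_0,p_1,\dots$. This works for either parity, since the argument never uses the specific values of $\lambda_j$.

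\emph{Base case.} For $s=0$ we have $\omega^0=1=t_{0,0}\,p_0$, so the identity holds.

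\emph{Key identity.} For each $k$, the defining recurrence gives
\[
  \omega\,p_k(\omega)=p_{k+1}(\omega)-\lambda_k\,p_k(\omega).
\]

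\emph{Inductive step.} Assume the identity for $s-1$, so that $\omega^{s-1}=\sum_{k}(-1)^{s-1-k}t_{s-1,k}\,p_k(\omega)$. Multiplying by $\omega$ and applying the key identity termwise gives
\[
  \omega^{s}=\sum_k(-1)^{s-1-k}t_{s-1,k}\,p_{k+1}(\omega)-\sum_k(-1)^{s-1-k}\lambda_k\,t_{s-1,k}\,p_k(\omega).
\]
Shift the index in the first sum ($k+1\to k$), so its coefficient of $p_k$ becomes $(-1)^{s-k}t_{s-1,k-1}$. The second sum's coefficient of $p_k$ is $(-1)^{s-k}\lambda_k\,t_{s-1,k}$, since the leading minus sign flips the parity of the exponent. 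Collecting terms, the total coefficient of $p_k$ is
\[
  (-1)^{s-k}\bigl(t_{s-1,k-1}+\lambda_k\,t_{s-1,k}\bigr)=(-1)^{s-k}\,t_{s,k},
\]
by the defining recurrence for $t_{s,k}$.

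\emph{Boundary terms.} The convention $t_{s,k}=0$ outside $0\le k\le s$ takes care of the ends of the range. At $k=0$ the term $t_{s-1,-1}$ vanishes. At $k=s$ the term $t_{s-1,s}$ vanishes, and the coefficient is $t_{s-1,s-1}=1$. So the sum runs exactly over $0\le k\le s$, which completes the induction.

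The only delicate point is the sign bookkeeping and the boundary conventions. There is no real obstacle in this proof.
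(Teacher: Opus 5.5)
Your proposal is correct and follows essentially the paper's own proof: induction on $s$ using $\omega p_k=p_{k+1}-\lambda_k p_k$, an index shift, and collecting the coefficient of $p_k$ into $(-1)^{s-k}\bigl[t_{s-1,k-1}+\lambda_k t_{s-1,k}\bigr]=(-1)^{s-k}t_{s,k}$. Your explicit handling of the signs and of the boundary terms at $k=0$ and $k=s$ just spells out what the paper's one-line computation leaves implicit.
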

\begin{proof}
Induction on $s$, using $\omega p_k=p_{k+1}-\lambda_kp_k$:
$\omega\cdot\omega^{s-1}=\sum_k(-1)^{s-1-k}t_{s-1,k}(p_{k+1}-\lambda_kp_k)=\sum_k(-1)^{s-k}\bigl[t_{s-1,k-1}+\lambda_kt_{s-1,k}\bigr]p_k$.
\end{proof}
\noindent The recurrence gives $t_{s,k}=h_{s-k}(\lambda_0,\dots,\lambda_k)$, a complete homogeneous symmetric function. In terms of the Jacobi--Stirling numbers of the second kind $\mathrm{JS}_n^k(z)$ of Gelineau and Zeng \cite{gelineau} ($\mathrm{JS}_n^k=\mathrm{JS}_{n-1}^{k-1}+k(k+z)\mathrm{JS}_{n-1}^k$),
\[
  t^{\mathrm e}_{s,k}=4^{s-k}\,\mathrm{JS}_s^k(\tfrac12),\qquad t^{\mathrm o}_{s,k}=4^{s-k}\,\mathrm{JS}_{s+1}^{k+1}(-\tfrac12),
\]
since both sides are $h_{s-k}$ at the same nodes ($\lambda^{\mathrm e}_j=4j(j+\frac12)$ and $\lambda^{\mathrm o}_j=4i(i-\frac12)$ with $i=j+1$; a zero node does not change $h$). The two sets of nodes, $K(K+1)$ for $K$ even and for $K$ odd, together form the Legendre eigenvalues $K(K+1)$; since $h$ of a union of two disjoint sets of nodes is $\sum_ih_i(\text{first set})\,h_{n-i}(\text{second set})$, the Legendre--Stirling numbers of the second kind \cite{andrews}, $\mathrm{PS}(n,k)=h_{n-k}(0\cdot1,1\cdot2,\dots,k(k+1))$, satisfy $\mathrm{PS}(n,k)=\sum_i t^{\mathrm e}_{a+i,a}\,t^{\mathrm o}_{b+n-k-i,b}$ with $a=\lfloor k/2\rfloor$, $b=\lceil k/2\rceil-1$ ($k\ge1$).

\begin{theorem}[Alternating sum]\label{thm:altsum}
For $q\ge1$,
\[
  W(2q+1)=2\sum_{k=0}^{q-1}(-1)^{q-1-k}\,t^{\mathrm o}_{q-1,k}\,(2k+2)!,\qquad
  W(2q+2)=2\sum_{k=0}^{q}(-1)^{q-k}\,t^{\mathrm e}_{q,k}\,(2k+1)!.
\]
\end{theorem}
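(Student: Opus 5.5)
The plan is to combine the three preceding results directly, since the alternating sum is just the operator form evaluated through the polynomial basis $p_k$.

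\emph{Odd case.} Start from Lemma~\ref{lem:operator}: $W(2q+1)=4\,(T^{q-1}x^3)(1)$. Lemma~\ref{lem:trec}, with $s=q-1$ and the odd nodes $\lambda^{\mathrm o}$, gives the polynomial identity $\omega^{q-1}=\sum_{k=0}^{q-1}(-1)^{q-1-k}t^{\mathrm o}_{q-1,k}\,p_k(\omega)$. Substitute $\omega=T$. This is legitimate because $T$ is a linear operator on the space of functions $F$ with $F(0)=0$, so polynomial identities in $\omega$ become operator identities. Apply both sides to $F=x^3$ (indeed $F(0)=0$) and evaluate at $x=1$. This gives
\[
  W(2q+1)=4\sum_{k=0}^{q-1}(-1)^{q-1-k}\,t^{\mathrm o}_{q-1,k}\,(p_k(T)x^3)(1).
\]
By Corollary~\ref{cor:bridge}, $(p_k(T)x^3)(1)=(2k+2)!/2$, and the factor $4\cdot\frac12=2$ yields the claimed formula.

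\emph{Even case.} This is the same argument. Use $W(2q+2)=2\,(T^qx^2)(1)$, expand $\omega^q$ with $s=q$ and the even nodes $\lambda^{\mathrm e}$, and apply Corollary~\ref{cor:bridge} in the form $(p_k(T)x^2)(1)=(2k+1)!$. The factor $2$ carries through unchanged.

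\emph{Main obstacle.} There is no real obstacle; the points to check are bookkeeping.
\begin{itemize}
\item The parity label of the nodes must match the one used in Lemma~\ref{thm:bridge} and Corollary~\ref{cor:bridge}: odd $N$ goes with $x^3$, even $N$ with $x^2$.
\item Evaluation at $1$ commutes with finite linear combinations.
\item The polynomial $p_k(T)$ is the composition $\prod_{j<k}(T+\lambda_j)$. Its factors commute, so the substitution $\omega\mapsto T$ is a ring homomorphism from $\mathbb Q[\omega]$ to operators.
\end{itemize}
As a sanity check, $q=1$ gives $W(3)=2\,t^{\mathrm o}_{0,0}\cdot2!=4$ and $W(4)=2\bigl(-t^{\mathrm e}_{1,0}\cdot1!+t^{\mathrm e}_{1,1}\cdot3!\bigr)=2(0+6)=12$, since $\lambda^{\mathrm e}_0=0$. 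Both agree with Table~\ref{tab:values}.
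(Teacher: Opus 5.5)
Your proposal is correct and is the paper's own argument: substitute $T$ into the polynomial identity of Lemma~\ref{lem:trec}, apply it to $x^3$ (odd $N$) or $x^2$ (even $N$), evaluate at $x=1$, and combine Lemma~\ref{lem:operator} with Corollary~\ref{cor:bridge}. Your constants $4\cdot\frac12=2$ and $2\cdot1=2$ are right, and so are your checks at $q=1$ (including $t^{\mathrm e}_{1,0}=\lambda^{\mathrm e}_0=0$).
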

\begin{proof}
Lemma~\ref{lem:trec} is a polynomial identity, so $\omega$ may be replaced by $T$. Apply it to $x^3$ (odd $N$) or $x^2$ (even $N$), evaluate at $x=1$, and use Lemma~\ref{lem:operator} and Corollary~\ref{cor:bridge}.
\end{proof}
\noindent For $F(x)=x^3$ (odd case) or $F(x)=x^2$ (even case), $(p_k(T)F)(1)$ is the total weight of ``lazy'' paths, which at each step either take a step of $T$ or pause with weight $\lambda_j$, with terminal weight $F(y)$ at the final height $y$; the alternating sum removes the pauses by inclusion--exclusion. Appendix~\ref{sec:involution} gives a second proof, directly from $W(N)=\per M$ by a sign-reversing involution.
For $N=7$ (odd, $q=3$) we have $\lambda^{\mathrm o}_0=2$, $\lambda^{\mathrm o}_1=12$, $t_{2,\cdot}=(4,14,1)$, and $W(7)=2\,(4\cdot2!-14\cdot4!+1\cdot6!)=2\,(8-336+720)=784$. The cancellation grows with $N$: For $N=20$ the term of largest absolute value is about $368.5$ times $W(20)/2$.
Theorem~\ref{thm:altsum} computes $W(N)$ with $O(N^2)$ integer operations; the values in Tables~\ref{tab:values} and~\ref{tab:asym} were computed in this way.

\subsection{Moment representation}\label{ssec:moment}
\begin{fact}[Total mass of the continuous dual Hahn weight; Wilson \cite{wilson}, {\cite[(18.25.2), (18.25.7)--(18.25.8)]{dlmf}}]\label{fact:W}
For $\alpha,\beta,\gamma>0$, $\ \displaystyle\frac1{2\pi}\int_0^\infty\Bigl|\frac{\Gamma(\alpha+ix)\Gamma(\beta+ix)\Gamma(\gamma+ix)}{\Gamma(2ix)}\Bigr|^2dx=\Gamma(\alpha+\beta)\Gamma(\alpha+\gamma)\Gamma(\beta+\gamma)$.
\end{fact}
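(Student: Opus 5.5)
This is the total-mass (Wilson) integral, and the paper cites it rather than proving it. I would obtain it as a limit of the four-parameter de Branges--Wilson integral
\[
  \frac1{4\pi}\int_{-\infty}^{\infty}\frac{\prod_{j=1}^{4}\Gamma(a_j+ix)\Gamma(a_j-ix)}{\Gamma(2ix)\Gamma(-2ix)}\,dx=\frac{\prod_{j<k}\Gamma(a_j+a_k)}{\Gamma(a_1+a_2+a_3+a_4)},
\]
and prove that identity separately. The first remark is that the integrand of Fact~\ref{fact:W} is even in $x$, since $|\Gamma(\alpha+ix)|^2=\Gamma(\alpha+ix)\Gamma(\alpha-ix)$. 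So $\frac1{2\pi}\int_0^\infty$ equals $\frac1{4\pi}\int_{-\infty}^\infty$ of the symmetric expression $\Gamma(\alpha\pm ix)\Gamma(\beta\pm ix)\Gamma(\gamma\pm ix)/\Gamma(\pm2ix)$.

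\emph{Step 1: the limit.} Set $a_1,a_2,a_3=\alpha,\beta,\gamma$ and $a_4=\delta$, and divide both sides by $\Gamma(\delta)^2$.
\begin{itemize}
\item On the right side, Stirling's formula gives $\Gamma(\alpha+\delta)\Gamma(\beta+\delta)\Gamma(\gamma+\delta)/\bigl(\Gamma(\delta)^2\Gamma(\alpha+\beta+\gamma+\delta)\bigr)\to1$ as $\delta\to\infty$.
\item On the left side, for fixed $x$ we have $\Gamma(\delta+ix)\Gamma(\delta-ix)/\Gamma(\delta)^2=\prod_{n\ge0}\bigl(1+x^2/(n+\delta)^2\bigr)^{-1}$. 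This is at most $1$ and increases to $1$ as $\delta\to\infty$.
\item Hence monotone convergence applies, with the integrable majorant given by the three-parameter integrand. That majorant decays like $|x|^{2(\alpha+\beta+\gamma)-3}\cdot e^{-\pi|x|}$ by Stirling, because $|\Gamma(2ix)|^{-2}=2x\sinh(2\pi x)/\pi$.
\end{itemize}
Taking the limit gives Fact~\ref{fact:W}.

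\emph{Step 2: the four-parameter identity.} I would prove it by the standard Barnes-integral route.
\begin{itemize}
\item Write $\frac{1}{\Gamma(2s)\Gamma(-2s)}=-\frac{2s\sin(2\pi s)}{\pi}$ and read the integral as $\frac1{4\pi i}\int_{i\mathbb R}$ in $s=ix$.
\item Regard both sides as analytic functions of $a_4$ and establish the identity first for $a_4=a_1+\tfrac12$ or a similar degenerate choice. There the duplication formula merges $\Gamma(a_1\pm s)\Gamma(a_1+\tfrac12\pm s)$ with $\Gamma(\pm2s)$, and the integral collapses to one of Barnes' first-lemma type,
\[
  \frac1{2\pi i}\int\Gamma(a+s)\Gamma(b+s)\Gamma(c-s)\Gamma(d-s)\,ds=\frac{\Gamma(a+c)\Gamma(a+d)\Gamma(b+c)\Gamma(b+d)}{\Gamma(a+b+c+d)}.
\]
\item Barnes' lemma itself follows by closing the contour to the right, summing residues into a ${}_2F_1$ at $1$, and applying Gauss's theorem, at least for parameters where the series converge; it then extends by analyticity.
\item To pass from the special $a_4$ to general $a_4$, I would use a contiguity relation. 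Shifting the contour of the integral for $a_4+1$ and using $\Gamma(a_4+1\pm s)=(a_4^2-s^2)\Gamma(a_4\pm s)$ gives the same first-order recurrence in $a_4$ that the right-hand side satisfies. Carlson's theorem, using the polynomial-times-$e^{-\pi|x|}$ growth bounds, then closes the argument.
\end{itemize}

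The main obstacle is Step 2. Deriving a clean recurrence in $a_4$ that closes on the integral alone is not automatic, because the factor $(a_4^2-s^2)$ introduces a new moment. If this does not close, the fallback is to shift the contour to the right directly and collect residues at $s=a_j+n$ for $j=1,\dots,4$. After symmetrisation the residue sum becomes a very-well-poised ${}_5F_4(1)$ combination, which Dougall's ${}_5F_4$ summation evaluates to the product of Gammas; the convergence and the estimate of the discarded contour at infinity again come from the $e^{-\pi|x|}$ decay. Step 1, by contrast, is routine dominated or monotone convergence.
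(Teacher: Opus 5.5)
The paper does not prove this Fact. It quotes it from Wilson and from the continuous dual Hahn normalization in the DLMF, and the Lean formalization takes it as an axiom.

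\textbf{Step 1.} Your Step~1 is correct. It is the standard Wilson-to-continuous-dual-Hahn limit, and the monotone convergence via the product formula of Fact~\ref{fact:P} is clean. There is one harmless slip: $|\Gamma(2ix)|^{-2}=2x\sinh(2\pi x)/\pi$ contributes a factor $|x|e^{2\pi|x|}$, so the majorant decays like $|x|^{2(\alpha+\beta+\gamma)-2}e^{-\pi|x|}$, not with exponent $-3$. But Step~1 only replaces the cited three-parameter integral by the stronger de Branges--Wilson integral. If you cite that, you are where the paper is. The proposal becomes self-contained only if Step~2 works, and as written it does not.

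\textbf{The base case is wrong.} Duplication gives $\Gamma(a_1\pm s)\Gamma(a_1+\frac12\pm s)=\pi\,2^{2-4a_1}\Gamma(2a_1+2s)\Gamma(2a_1-2s)$. This does not cancel $1/(\Gamma(2s)\Gamma(-2s))$ unless $a_1=0$. The identity that actually collapses the integrand is $\Gamma(2s)\Gamma(-2s)=\frac1{4\pi}\Gamma(s)\Gamma(-s)\Gamma(\frac12+s)\Gamma(\frac12-s)$. So you must fix two parameters, $\{a_3,a_4\}=\{0,\frac12\}$, with $a_3=0$ on the boundary, where the integrand is still regular. The integrand then becomes $4\pi\Gamma(a_1\pm s)\Gamma(a_2\pm s)$, and Barnes' first lemma gives the right side.

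\textbf{The recurrence does not close.} As you suspected, $\Gamma(a_4+1\pm s)=(a_4^2-s^2)\Gamma(a_4\pm s)$ brings in the new moment $\int s^2w$, so Carlson's theorem has nothing to act on. The missing idea is to subtract two shifts. Write $I(a)$ and $F(a)$ for the two sides and $e_j$ for the unit vectors. Since $(a_j^2-s^2)-(a_k^2-s^2)=a_j^2-a_k^2$,
\[
I(a+e_j)-I(a+e_k)=(a_j^2-a_k^2)\,I(a),
\]
and $F$ satisfies the same relation. Take $k=1$ and use that $a_1$ is free on the base set. This carries the identity to $a_3=1,2,\dots$ at $a_4=\frac12$. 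After Carlson's theorem in $a_3$, it carries it on to $a_4=\frac32,\frac52,\dots$, followed by Carlson's theorem in $a_4$.

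Carlson's growth hypothesis must then be checked in the \emph{parameter}, for a normalized quotient such as $I/F$, because both sides grow superexponentially in $a_3$. The decay in $x$ that you invoke does not give this.

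The same two ingredients work verbatim for the three-parameter integral. There the base case $\{\beta,\gamma\}=\{0,\frac12\}$ reduces to $\int_{-\infty}^{\infty}|\Gamma(\alpha+ix)|^2dx=2^{1-2\alpha}\pi\,\Gamma(2\alpha)$. So the detour through four parameters is not needed at all.

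\textbf{The fallback skips its main step.} Dougall's theorem sums each of the four residue series separately, and the series converge only when $a_1+a_2+a_3+a_4<1$, so analytic continuation follows. Each series gives $\prod_{j<k}\Gamma(a_j+a_k)/\Gamma(a_1+a_2+a_3+a_4)$ times a trigonometric factor. To combine the four terms you still need
\[
  \sum_{j=1}^{4}\frac{\sin 2\pi a_j\,\prod_{k<l,\ k,l\ne j}\sin\pi(a_k+a_l)}{\prod_{k\ne j}\sin\pi(a_j-a_k)}=2\sin\pi(a_1+a_2+a_3+a_4).
\]
This identity is where the content of the residue proof lies, and your sketch omits it.
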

\noindent Let $\mu_{\alpha,\beta,\gamma}$ be the probability measure on $(0,\infty)$ whose density is the integrand above divided by $2\pi\,\Gamma(\alpha+\beta)\Gamma(\alpha+\gamma)\Gamma(\beta+\gamma)$, and let $(z)_k:=z(z+1)\ldots(z+k-1)$.
\begin{lemma}\label{lem:cdh}
$\Ex_{\mu_{\alpha,\beta,\gamma}}\bigl[(\alpha+ix)_k(\alpha-ix)_k\bigr]=(\alpha+\beta)_k(\alpha+\gamma)_k$; in particular all moments are finite.
\end{lemma}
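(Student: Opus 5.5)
The approach is to prove the identity with Fact~\ref{fact:W} alone, by absorbing the polynomial $(\alpha+ix)_k(\alpha-ix)_k$ into the Gamma factors. The functional equation $\Gamma(z+k)=(z)_k\Gamma(z)$ gives, for real $x$,
\[
  (\alpha+ix)_k(\alpha-ix)_k\,\bigl|\Gamma(\alpha+ix)\bigr|^2=\Gamma(\alpha+k+ix)\Gamma(\alpha+k-ix)=\bigl|\Gamma(\alpha+k+ix)\bigr|^2 ,
\]
because $\overline{\Gamma(z)}=\Gamma(\bar z)$ and $(\alpha-ix)_k=\overline{(\alpha+ix)_k}$. In particular the factor $(\alpha+ix)_k(\alpha-ix)_k=|(\alpha+ix)_k|^2$ is nonnegative, so the integral of it against the density is a genuine (possibly infinite) nonnegative integral, and no convergence issue arises beyond what the fact itself controls.

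Multiply the density of $\mu_{\alpha,\beta,\gamma}$ by this factor. The result is exactly the integrand of Fact~\ref{fact:W} with $\alpha$ replaced by $\alpha+k>0$ and $\beta,\gamma$ unchanged, divided by the constant $2\pi\,\Gamma(\alpha+\beta)\Gamma(\alpha+\gamma)\Gamma(\beta+\gamma)$. Applying Fact~\ref{fact:W} with parameters $(\alpha+k,\beta,\gamma)$ then gives
\[
  \Ex_{\mu_{\alpha,\beta,\gamma}}\bigl[(\alpha+ix)_k(\alpha-ix)_k\bigr]
  =\frac{\Gamma(\alpha+k+\beta)\Gamma(\alpha+k+\gamma)\Gamma(\beta+\gamma)}{\Gamma(\alpha+\beta)\Gamma(\alpha+\gamma)\Gamma(\beta+\gamma)}
  =(\alpha+\beta)_k(\alpha+\gamma)_k ,
\]
again by $\Gamma(z+k)=(z)_k\Gamma(z)$. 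This proves the identity, and in particular the left side is finite.

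For the moments, note that $(\alpha+ix)_k(\alpha-ix)_k=\prod_{j<k}\bigl((\alpha+j)^2+x^2\bigr)$ is a polynomial in $x^2$ of degree $k$ with leading coefficient $1$. It therefore dominates $x^{2k}$, because all the other coefficients are positive. So $\Ex[x^{2k}]<\infty$ for every $k$, and the odd moments are finite by Cauchy--Schwarz (or by $|x|^{2k-1}\le1+x^{2k}$).

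The only step needing care is the identification of the multiplied density with the parameter-shifted Wilson integrand, namely checking that $|\Gamma(2ix)|^2$ and the $\beta,\gamma$ factors are untouched. This is immediate, so I expect no real obstacle.
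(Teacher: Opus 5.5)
Your proof is correct and is essentially the paper's argument. Like the paper, you absorb $(\alpha+ix)_k(\alpha-ix)_k$ into $|\Gamma(\alpha+k+ix)|^2$, apply Fact~\ref{fact:W} with $\alpha$ replaced by $\alpha+k$, and get finiteness of moments from $\prod_{j<k}\bigl((\alpha+j)^2+x^2\bigr)\ge x^{2k}$.
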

\begin{proof}
$|\Gamma(\alpha+ix)|^2(\alpha+ix)_k(\alpha-ix)_k=|\Gamma(\alpha+k+ix)|^2$, so the left side is the total mass with $\alpha$ replaced by $\alpha+k$ divided by the original total mass, which by Fact~\ref{fact:W} is the right side. Moreover $(\alpha+ix)_k(\alpha-ix)_k=\prod_{j<k}((\alpha+j)^2+x^2)\ge x^{2k}$.
\end{proof}
\begin{theorem}[Moment representation]\label{thm:moment}
Let $\omega:=4\bigl(x^2+\frac1{16}\bigr)=4\bigl|x+\frac i4\bigr|^2$, $\mu_{\mathrm o}:=\mu_{3/4,3/4,5/4}$ and $\mu_{\mathrm e}:=\mu_{1/4,3/4,5/4}$. For every polynomial $Q$,
\[
  \bigl(Q(T)x^3\bigr)(1)=\Ex_{\mu_{\mathrm o}}\bigl[Q(\omega)\bigr],\qquad \bigl(Q(T)x^2\bigr)(1)=\Ex_{\mu_{\mathrm e}}\bigl[Q(\omega)\bigr].
\]
Consequently $W(2q+1)=4\,\Ex_{\mu_{\mathrm o}}[\omega^{q-1}]$ and $W(2q+2)=2\,\Ex_{\mu_{\mathrm e}}[\omega^{q}]$.
\end{theorem}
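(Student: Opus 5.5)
Both sides are linear in $Q$, and by Lemma~\ref{lem:trec} the polynomials $p_0,p_1,\dots$ form a basis of the polynomials in $\omega$ (for each parity, with the corresponding $\lambda$). So it suffices to verify the identity for $Q=p_k$. By Corollary~\ref{cor:bridge} the left sides are then $(2k+2)!/2$ (odd) and $(2k+1)!$ (even). It remains to compute $\Ex[p_k(\omega)]$ under $\mu_{\mathrm o}$ and $\mu_{\mathrm e}$.

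\emph{Factoring $p_k(\omega)$ as a Pochhammer product.} With $\omega=4(x^2+\frac1{16})$, write each factor as $\omega+\lambda_j=4\bigl(x^2+\frac1{16}+\frac{\lambda_j}4\bigr)$. In the odd case $\frac{\lambda^{\mathrm o}_j}4+\frac1{16}=(j+1)(j+\frac12)+\frac1{16}=(j+\frac34)^2$. In the even case $\frac{\lambda^{\mathrm e}_j}4+\frac1{16}=j(j+\frac12)+\frac1{16}=(j+\frac14)^2$. Hence
\[
  p_k(\omega)=4^k\prod_{j<k}\bigl((\alpha+j)^2+x^2\bigr)=4^k(\alpha+ix)_k(\alpha-ix)_k ,
\]
with $\alpha=\frac34$ (odd) or $\alpha=\frac14$ (even). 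This matches the first parameter of $\mu_{\mathrm o}$ and $\mu_{\mathrm e}$, which is the reason for that choice.

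\emph{Applying Lemma~\ref{lem:cdh}.} For $\mu_{\mathrm o}=\mu_{3/4,3/4,5/4}$,
\[
  \Ex[p_k(\omega)]=4^k\,(\tfrac32)_k\,(2)_k=2^k(3\cdot5\cdots(2k+1))\cdot2^k(k+1)!=\frac{(2k+1)!}{k!}\,(k+1)!\cdot\frac{1}{1}\cdot\frac{1}{?}.
\]
Doing this carefully: $4^k(\frac32)_k=\frac{(2k+1)!}{k!}$ and $(2)_k=(k+1)!$, so the product is $(2k+1)!(k+1)=\frac{(2k+2)!}{2}$, as required.

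For $\mu_{\mathrm e}=\mu_{1/4,3/4,5/4}$, Lemma~\ref{lem:cdh} gives $4^k(1)_k(\frac32)_k=k!\cdot\frac{(2k+1)!}{k!}=(2k+1)!$, again as required.

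By linearity the identity then holds for every polynomial $Q$. The consequences for $W$ follow by taking $Q=\omega^{q-1}$ and $Q=\omega^q$ in Lemma~\ref{lem:operator}, since $(\omega^s\text{ at }T)\,x^3$ is $T^{q-1}x^3$. Finiteness of all moments, needed for the expectations to make sense, is part of Lemma~\ref{lem:cdh}.

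The only delicate step is spotting the completed squares $(j+\frac34)^2$ and $(j+\frac14)^2$. Beyond that, the bookkeeping of the Pochhammer products is routine. The analytic content, Wilson's integral, enters only through Lemma~\ref{lem:cdh}.
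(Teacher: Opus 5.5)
Your proof is correct and is the paper's argument. You reduce by linearity to $Q=p_k$, take the left side from Corollary~\ref{cor:bridge}, use the completed squares $\frac{\lambda_j}4+\frac1{16}=(j+\alpha)^2$ to write $p_k(\omega)=4^k(\alpha+ix)_k(\alpha-ix)_k$, and then apply Lemma~\ref{lem:cdh} and Lemma~\ref{lem:operator}. Do delete the first display for $\mu_{\mathrm o}$: its middle expression has a spurious extra factor $2^k$ and it trails off unfinished, whereas the corrected computation after it, $4^k(\frac32)_k=(2k+1)!/k!$ and $(2)_k=(k+1)!$, is right.
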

\begin{proof}
Both sides are linear in $Q$, so it suffices to take $Q=p_k$ (with $\lambda=\lambda^{\mathrm o}$ in the first identity and $\lambda=\lambda^{\mathrm e}$ in the second); the left side is given by Corollary~\ref{cor:bridge}. For the right side in the odd case, $\frac14+(2j+2)(2j+1)=4(j+\frac34)^2$, so $\omega+\lambda^{\mathrm o}_j=4(\frac34+j+ix)(\frac34+j-ix)$ and $p_k(\omega)=4^k(\frac34+ix)_k(\frac34-ix)_k$.
Lemma~\ref{lem:cdh} with $(\frac34,\frac34,\frac54)$ gives $\Ex_{\mu_{\mathrm o}}p_k=4^k(\frac32)_k(2)_k=\frac{(2k+2)!}2$.
In the even case $\frac14+(2j+1)(2j)=4(j+\frac14)^2$, $p_k(\omega)=4^k(\frac14+ix)_k(\frac14-ix)_k$, and $(\frac14,\frac34,\frac54)$ gives $\Ex_{\mu_{\mathrm e}}p_k=4^k(1)_k(\frac32)_k=(2k+1)!$. Finally use Lemma~\ref{lem:operator}.
\end{proof}
\noindent For $N=7$, Lemma~\ref{lem:trec} gives $\omega^2=p_2-14p_1+4p_0$, so $\Ex_{\mu_{\mathrm o}}\omega^2=360-168+4=196$ and $W(7)=4\cdot196=784$; the number $196$ is the integral of a positive function.

\textbf{A unified formula.} In the odd case one factor $\omega$ can be absorbed: The densities of $\mu_{\mathrm o}$ and $\mu^\ast:=\mu_{1/4,3/4,3/4}$ have ratio proportional to $|\Gamma(\frac54+ix)|^2/|\Gamma(\frac14+ix)|^2=x^2+\frac1{16}=\omega/4$, and $\Ex_{\mu^\ast}[\omega/4]=1$ by Lemma~\ref{lem:cdh} with $k=1$; so $d\mu_{\mathrm o}=\frac\omega4\,d\mu^\ast$ and
\[
  W(2s+1)=\Ex_{\mu_{1/4,3/4,3/4}}[\omega^s],\qquad W(2s+2)=2\,\Ex_{\mu_{1/4,3/4,5/4}}[\omega^s]\qquad(s\ge0)
\]
(for $s=0$ these read $W(1)=1$ and $W(2)=2$).
Both parities use $\mu_{1/4,3/4,\gamma}$, with $\gamma=\frac34$ or $\frac54$. The parameters $\frac14$ and $\frac34$ are related by the reflection formula $\Gamma(z)\Gamma(1-z)=\pi/\sin\pi z$; with the duplication formula it gives $|\Gamma(\frac14+ix)\Gamma(\frac34+ix)|^2/|\Gamma(2ix)|^2=4\pi x\tanh(2\pi x)$ (see the proof of Lemma~\ref{lem:dens}), so $\mu_{1/4,3/4,\gamma}$ has density $\frac4{\sqrt\pi}\,x\tanh(2\pi x)\,|\Gamma(\gamma+ix)|^2$. Substituting $|\Gamma(\frac54+ix)|^2=(x^2+\frac1{16})|\Gamma(\frac14+ix)|^2$ combines the two parities:
\begin{equation}\label{eq:unified}
  W(N)=\frac{2^{N+1}}{\sqrt\pi}\int_0^\infty x\tanh(2\pi x)\Bigl(x^2+\frac1{16}\Bigr)^{\lfloor N/2\rfloor}\Bigl|\Gamma\Bigl(\frac14+\Bigl\{\frac N2\Bigr\}+ix\Bigr)\Bigr|^2dx\qquad(N\ge1),
\end{equation}
where $\{\cdot\}$ denotes the fractional part. The reflection formula is used only to turn the common pair $(\frac14,\frac34)$ into the elementary factor $x\tanh(2\pi x)$; the parity sits entirely in the third parameter.

\subsection{The Genocchi family}\label{ssec:genocchi}
By the theory of Flajolet \cite{flajolet} and Viennot \cite{viennot}, the weights of Motzkin paths are the coefficients of a J-fraction. In the variable $u=x^2+\alpha^2$, the monic orthogonal polynomials $P_n$ of $\mu_{\alpha,\beta,\gamma}$ (the continuous dual Hahn polynomials \cite[\S9.3]{kls}) satisfy
$u\,P_n=P_{n+1}+(A_n+C_n)\,P_n+A_{n-1}C_n\,P_{n-1}$ ($P_{-1}=0$, $P_0=1$) with $A_n=(n+\alpha+\beta)(n+\alpha+\gamma)$ and $C_n=n(n+\beta+\gamma-1)$.
This J-fraction is the even contraction of the S-fraction with $\theta_{2k-1}=A_{k-1}$ and $\theta_{2k}=C_k$ (with $\theta_0:=0$; indeed $C_0=0$, $A_n+C_n=\theta_{2n}+\theta_{2n+1}$ and $A_{n-1}C_n=\theta_{2n-1}\theta_{2n}$). The definition of $\mu_{\alpha,\beta,\gamma}$ extends to $\alpha=0$, and Fact~\ref{fact:W} and Lemma~\ref{lem:cdh} remain valid by dominated convergence as $\alpha\to0^+$: For $0<\alpha\le1$, Fact~\ref{fact:P} gives $|\Gamma(\alpha+ix)|^2=|\Gamma(1+\alpha+ix)|^2/(\alpha^2+x^2)\le\Gamma(1+\alpha)^2|\Gamma(2+ix)|^2/(\alpha^2+x^2)$. With $|\Gamma(2ix)|^{-2}=2x\sinh(2\pi x)/\pi$, the integrand of Fact~\ref{fact:W}, times any fixed polynomial in $x^2$, is therefore bounded on $(0,1]$ uniformly in $\alpha$, and on $[1,\infty)$ it is at most a constant times the same integrand with $\alpha$ replaced by $2$, which is integrable; for example $d\mu_{0,1,1}=2\pi x^2\cosh(\pi x)\sinh^{-2}(\pi x)\,dx$. For the four measures below, $\theta_{2k-1}=k(k+a')$ and $\theta_{2k}=k(k+b')$:
\begin{center}\small
\begin{tabular}{@{}lll@{}}
\toprule
$(a',b')$ & measure & moments\\
\midrule
$(0,1)$ & $\mu_{0,1,1}$ & Genocchi numbers (A110501)\\
$(1,1)$ & $\mu_{1/2,1/2,3/2}$ & median Genocchi numbers $H_{2s+3}$ (A005439)\\
$(0,\frac12)$ & $\mu_{1/4,3/4,3/4}$ & $W(2s+1)/4^s$\\
$(\frac12,1)$ & $\mu_{1/4,3/4,5/4}$ & $W(2s+2)/(2\cdot4^s)$\\
\bottomrule
\end{tabular}
\end{center}
In the variable $\omega$ itself the S-fractions of $W$ have integer coefficients. The number $W(2s+1)$ is the total weight of the Dyck paths of length $2s$ in which a down step from height $h$ has weight $\theta_h$, where $\theta_{2k-1}=(2k)^2$ and $\theta_{2k}=2k(2k+1)$ ($4,6,16,20,36,42,\dots$); for $W(2s+2)/2$ the weights are $2k(2k+1)$ and $2k(2k+2)$ ($6,8,20,24,\dots$).
The median Genocchi numbers also occur directly among rainbow maps:
\begin{corollary}\label{cor:genocchi}
The number of rainbow maps of order $N=2q+1$ with exactly $q$ arcs pointing to the nearer end is the median Genocchi number $H_{2q+3}$ ($2,8,56,608,9440,\dots$ for $q=1,2,\dots$), the number of D-permutations of $[2q]$ \cite{debsokal}, that is, of permutations $\pi$ of $\{1,\dots,2q\}$ with $\pi(i)\ge i$ for odd $i$ and $\pi(i)\le i$ for even $i$.
\end{corollary}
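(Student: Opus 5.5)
The plan is to translate the condition into a condition on the Motzkin path of Theorem~\ref{thm:motzkin}, recognize the resulting path weights as a J-fraction, and match that J-fraction with the known one for the median Genocchi numbers.

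\emph{Step 1: translate the condition.} For $N=2q+1$ there is no midpoint and $R'=q$. An arc points to the nearer end exactly when its vertex has the small bound. So the number of such arcs is $S_{R'}=s_1+\dots+s_q$, and the final height is $x_{q+1}=q+1-S_q$. Hence ``exactly $q$ arcs to the nearer end'' means precisely that the path ends at height $x_{q+1}=1$, its starting height. By Theorem~\ref{thm:motzkin}, and the remark that paths touching height $0$ have weight $0$, the count equals the total weight of $q$-step Motzkin paths from height $1$ back to height $1$. Up, level and down steps from height $x$ have weights $x(x+1)$, $2x^2$ and $x(x-1)$; the factor $2$ on level steps accounts for the two types with $s_r=1$.

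\emph{Step 2: rewrite as a J-fraction.} Shift to $h=x-1\ge0$. The weights become $(h+1)(h+2)$ for up steps, $b_h=2(h+1)^2$ for level steps, and $h(h+1)$ for down steps. By Flajolet's theory \cite{flajolet}, the total weight of paths from height $0$ to height $0$ depends only on the level weights $b_h$ and on the products $\lambda_h=(\text{up from }h-1)(\text{down from }h)=h^2(h+1)^2$. These are exactly the J-fraction coefficients of Section~\ref{ssec:genocchi} for $\mu_{1/2,1/2,3/2}$. With $A_n=(n+1)(n+2)$ and $C_n=n(n+1)$ we get $A_n+C_n=2(n+1)^2$ and $A_{n-1}C_n=n^2(n+1)^2$. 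Equivalently, they are the even contraction of the S-fraction with $\theta_{2k-1}=\theta_{2k}=k(k+1)$, the row $(a',b')=(1,1)$ of the table. Thus our count is the $q$th moment (in the variable $u=x^2+\frac14$) of $\mu_{1/2,1/2,3/2}$, which is the $q$th coefficient of this S-fraction. As a sanity check, for $q=1$ the only path is one level step at height $1$, of weight $2=H_5$.

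\emph{Step 3: identify the moments with $H_{2q+3}$.} Here I would invoke the known continued fraction for the median Genocchi numbers. The ordinary generating function $\sum_{s\ge0}H_{2s+3}t^s$ has the S-fraction with coefficients $1\cdot2,1\cdot2,2\cdot3,2\cdot3,3\cdot4,\dots$, due to Dumont and Zeng and to Viennot \cite{viennot}, and recovered in the D-permutation framework of \cite{debsokal}. The same reference gives the equality of $H_{2q+3}$ with the number of D-permutations of $[2q]$.

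The combinatorial reduction in Steps~1--2 is routine. The main obstacle is Step~3, that is, making sure the cited continued fraction has exactly these coefficients and this index shift ($H_{2q+3}$ rather than $H_{2q+1}$). I would check the index by matching initial terms: the paths give $2$, $8$, $56$, computed directly from the Motzkin weights, which agree with $2,8,56,\dots$. If a self-contained argument were wanted instead, one could build a bijection between D-permutations of $[2q]$ and the weighted paths, in the style of Flajolet--Viennot histories. Each pair of positions $(2r-1,2r)$ would give one step, and the weight factors would count the choices of cycle-record insertion. I would only attempt this if the citation proves inadequate.
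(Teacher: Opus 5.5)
Your proposal is correct and follows essentially the same route as the paper. Both reduce the condition to paths from height $1$ back to height $1$ and shift to $h=x-1$, which gives level weights $2(h+1)^2$ and products $h^2(h+1)^2$. Both then recognize the even contraction of the S-fraction $\theta_{2k-1}=\theta_{2k}=k(k+1)$ and cite the known S-fraction of the shifted median Genocchi numbers (the paper uses \cite[(2.38)--(2.39)]{debsokal}). Your detour through $\mu_{1/2,1/2,3/2}$ and your check of the initial terms $2,8,56$ are harmless extras that the argument does not need.
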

\begin{proof}
Exactly $q$ arcs point to the nearer end if and only if $S_q=q$, that is, $x_{q+1}=1$. So the count is the total weight of the paths of Section~\ref{ssec:pairs} from height $1$ back to height $1$. With height $y=x-1$, this is the $q$th moment of the J-fraction with level weights $2(y+1)^2$ and products of up and down weights $y^2(y+1)^2$, which is the even contraction of the S-fraction $\theta_{2k-1}=\theta_{2k}=k(k+1)$ of the shifted median Genocchi numbers \cite[(2.38)--(2.39)]{debsokal}.
\end{proof}
\noindent We do not know a bijection between these rainbow maps and D-permutations, nor models of the same kind for the integer sequences $W(2s+1)$ and $W(2s+2)/2$.

\subsection{Asymptotics}\label{ssec:asym}
It remains to estimate the moments $\Ex[\omega^{s}]=\int4^s(x^2+\frac1{16})^s\,d\mu$, where $\mu$ is $\mu_{\mathrm o}$ or $\mu_{\mathrm e}$, up to a polynomial factor in $s$. For this it suffices to bound the density above and below by functions of the form polynomial${}\times e^{-\pi x}$: The $s$th moment of $x^2$ is then $\Gamma(2s+O(1))\,\pi^{-2s}$, and multiplying by $4^s$ gives $(2/\pi)^{2s}(2s)!$ up to a polynomial factor. A finer asymptotic formula is given in Appendix~\ref{sec:fine}; it is not needed for the main theorem.

\begin{lemma}[Density]\label{lem:dens}
Let $\alpha=\frac34$ (for $\mu_{\mathrm o}$) or $\frac14$ (for $\mu_{\mathrm e}$). The density of the measure is
$\ \psi_\alpha(x)=\pi^{-3/2}\cdot4\pi x\bigl(x^2+\frac1{16}\bigr)\tanh(2\pi x)\,|\Gamma(\alpha+ix)|^2$ for $x>0$.
\end{lemma}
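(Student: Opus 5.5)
The plan is to write out the density of $\mu_{\alpha,\beta,\gamma}$ for the two parameter triples and simplify it with standard Gamma identities. For $\mu_{\mathrm o}=\mu_{3/4,3/4,5/4}$ the density is
\[
\frac{|\Gamma(\tfrac34+ix)|^2\,|\Gamma(\tfrac34+ix)|^2\,|\Gamma(\tfrac54+ix)|^2}{2\pi\,\Gamma(\tfrac32)\Gamma(2)\Gamma(2)\,|\Gamma(2ix)|^2}.
\]
For $\mu_{\mathrm e}=\mu_{1/4,3/4,5/4}$ it is
\[
\frac{|\Gamma(\tfrac14+ix)|^2|\Gamma(\tfrac34+ix)|^2|\Gamma(\tfrac54+ix)|^2}{2\pi\,\Gamma(1)\Gamma(\tfrac32)\Gamma(2)\,|\Gamma(2ix)|^2}.
\]
In both cases the normalizing constant is $2\pi\cdot\frac{\sqrt\pi}{2}=\pi^{3/2}$, since $\Gamma(\frac32)=\frac{\sqrt\pi}2$ and $\Gamma(1)=\Gamma(2)=1$.

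The key simplification is the identity
\[
\frac{|\Gamma(\tfrac14+ix)\Gamma(\tfrac34+ix)|^2}{|\Gamma(2ix)|^2}=4\pi x\tanh(2\pi x),
\]
which the paper announces. I would prove it in three steps.

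1. The duplication formula with $z=\frac14+ix$ gives $\Gamma(\frac14+ix)\Gamma(\frac34+ix)=2^{1/2-2ix}\sqrt\pi\,\Gamma(\frac12+2ix)$. Taking absolute values squared, the factor $|2^{-2ix}|=1$, so the left side of the product becomes $2\pi|\Gamma(\frac12+2ix)|^2$.
2. The reflection formula gives $|\Gamma(\frac12+iy)|^2=\pi/\cosh(\pi y)$, used here with $y=2x$.
3. From $\Gamma(iy)\Gamma(1-iy)=\pi/\sin(\pi i y)$ and $\Gamma(1-iy)=-iy\,\Gamma(-iy)$ we get $|\Gamma(iy)|^2=\pi/(y\sinh\pi y)$, used with $y=2x$.

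Combining the three steps gives $2\pi\cdot\frac{\pi}{\cosh 2\pi x}\cdot\frac{2x\sinh 2\pi x}{\pi}=4\pi x\tanh(2\pi x)$.

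Next I remove the remaining factors using $|\Gamma(1+z)|^2=|z|^2|\Gamma(z)|^2$.

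In the even case, $\alpha=\frac14$, the leftover factor is $|\Gamma(\frac54+ix)|^2=(x^2+\frac1{16})|\Gamma(\frac14+ix)|^2$. This gives density $\pi^{-3/2}\,4\pi x\tanh(2\pi x)(x^2+\frac1{16})|\Gamma(\frac14+ix)|^2$, as claimed.

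In the odd case the numerator is $|\Gamma(\frac34+ix)|^4|\Gamma(\frac54+ix)|^2$. Rewriting $|\Gamma(\frac54+ix)|^2=(x^2+\frac1{16})|\Gamma(\frac14+ix)|^2$ turns it into $(x^2+\frac1{16})\,|\Gamma(\frac34+ix)|^2\cdot|\Gamma(\frac14+ix)\Gamma(\frac34+ix)|^2$. Dividing by $|\Gamma(2ix)|^2$ and applying the key identity yields $\pi^{-3/2}\,4\pi x(x^2+\frac1{16})\tanh(2\pi x)|\Gamma(\frac34+ix)|^2$.

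The only real care point is the bookkeeping of constants. The expected obstacle is the sign and modulus conventions in the reflection step: $|\Gamma(2ix)|^2$ must come out as $\pi/(2x\sinh 2\pi x)$, not with a stray factor of $2$. I will check this against the special values $x\to0$ and the already stated $\mu_{0,1,1}$ example.
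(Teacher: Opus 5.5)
Your proposal is correct and follows the paper's proof essentially step for step. It uses the same normalization $\pi^{3/2}$, the same duplication-plus-reflection computation giving $|\Gamma(\frac14+ix)\Gamma(\frac34+ix)|^2/|\Gamma(2ix)|^2=4\pi x\tanh(2\pi x)$, and the same rewriting $|\Gamma(\frac54+ix)|^2=(x^2+\frac1{16})|\Gamma(\frac14+ix)|^2$ (pairing one factor $|\Gamma(\frac34+ix)|^2$ in the odd case); the only difference is that you derive $|\Gamma(iy)|^2$ and $|\Gamma(\frac12+iy)|^2$ from the reflection formula instead of citing them.
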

\begin{proof}
We use three standard identities {\cite[(5.4.3), (5.4.4), (5.5.5)]{dlmf}}: $|\Gamma(iy)|^2=\frac{\pi}{y\sinh\pi y}$, $|\Gamma(\frac12+iy)|^2=\frac\pi{\cosh\pi y}$, and $\Gamma(2z)=\pi^{-1/2}2^{2z-1}\Gamma(z)\Gamma(z+\frac12)$.
The first gives $|\Gamma(2ix)|^{-2}=\frac{2x\sinh2\pi x}\pi$. The third with $z=\frac14+ix$ (so that $|2^{2z-1}|^2=\frac12$), together with the second, gives $|\Gamma(\frac14+ix)\Gamma(\frac34+ix)|^2=2\pi|\Gamma(\frac12+2ix)|^2=\frac{2\pi^2}{\cosh2\pi x}$. Also $|\Gamma(\frac54+ix)|^2=(x^2+\frac1{16})|\Gamma(\frac14+ix)|^2$.
In both cases the product of the three squared Gamma moduli is therefore $\frac{2\pi^2}{\cosh2\pi x}(x^2+\frac1{16})|\Gamma(\alpha+ix)|^2$; for $(\frac34,\frac34,\frac54)$, pair one factor $|\Gamma(\frac34+ix)|^2$ with $|\Gamma(\frac54+ix)|^2=(x^2+\frac1{16})|\Gamma(\frac14+ix)|^2$.
Multiplying by $|\Gamma(2ix)|^{-2}$ gives $4\pi x(x^2+\frac1{16})\tanh(2\pi x)|\Gamma(\alpha+ix)|^2$, and the normalization $2\pi$ times the right side of Fact~\ref{fact:W} is $2\pi\,\Gamma(\frac32)\Gamma(2)\Gamma(2)=\pi^{3/2}$ for $\mu_{\mathrm o}$ and $2\pi\,\Gamma(1)\Gamma(\frac32)\Gamma(2)=\pi^{3/2}$ for $\mu_{\mathrm e}$.
\end{proof}

\begin{fact}[{\cite[(5.8.3)]{dlmf}}]\label{fact:P}
For real $a>0$ and real $x$, $\ \displaystyle\frac{\Gamma(a)^2}{|\Gamma(a+ix)|^2}=\prod_{k=0}^{\infty}\Bigl(1+\frac{x^2}{(a+k)^2}\Bigr)$.
In particular, for fixed $x$, $|\Gamma(a+ix)|^2/\Gamma(a)^2$ is nondecreasing in $a$.
\end{fact}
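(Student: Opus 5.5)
The product formula is the Weierstrass factorization of $1/\Gamma$ evaluated at $a+ix$ and $a-ix$, and I would derive it from that.

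Start from
\[
  \frac1{\Gamma(z)}=z\,e^{\gamma z}\prod_{k\ge1}\Bigl(1+\frac zk\Bigr)e^{-z/k},
\]
where $\gamma$ is Euler's constant. Form the ratio $\Gamma(a)/\Gamma(a+ix)$. In this ratio the exponential factors $e^{\gamma z}$ and $e^{-z/k}$ leave only a unimodular factor $e^{i\gamma x}\prod_k e^{-ix/k}$. This factor disappears once we take the modulus squared. Since $\Gamma(\bar z)=\overline{\Gamma(z)}$, we have $|\Gamma(a+ix)|^2=\Gamma(a+ix)\Gamma(a-ix)$, and the computation reduces to
\[
  \frac{\Gamma(a)^2}{\Gamma(a+ix)\Gamma(a-ix)}=\frac{(a+ix)(a-ix)}{a^2}\prod_{k\ge1}\frac{(1+\frac{a+ix}k)(1+\frac{a-ix}k)}{(1+\frac ak)^2},
\]
with the exponential factors cancelling termwise.

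Each factor equals
\[
  \frac{(k+a)^2+x^2}{(k+a)^2}=1+\frac{x^2}{(a+k)^2},
\]
and the prefactor is the $k=0$ term. The only analytic point is rearranging the product. The Weierstrass product converges absolutely for each fixed $z$, so multiplying the three convergent products termwise is legitimate. Alternatively, the resulting product $\prod_k(1+x^2/(a+k)^2)$ converges absolutely because $\sum_k(a+k)^{-2}<\infty$. This justification is the step needing care; the rest is algebra.

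For the monotonicity claim, fix $x$. Each factor $1+x^2/(a+k)^2$ is positive and nonincreasing in $a>0$, so the product $\Gamma(a)^2/|\Gamma(a+ix)|^2$ is nonincreasing in $a$. Its reciprocal $|\Gamma(a+ix)|^2/\Gamma(a)^2$ is therefore nondecreasing in $a$.
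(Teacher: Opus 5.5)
Your argument is correct and is the standard derivation; the paper gives no proof of its own but cites DLMF (5.8.3), which follows from the Weierstrass product exactly as you combine it, and it reads off the monotonicity factor by factor just as you do. One small point of phrasing: $\prod_k e^{-ix/k}$ does not converge by itself, so the exponentials should be cancelled in the $n$th partial Weierstrass products before letting $n\to\infty$. That is exactly what your displayed identity and your termwise-multiplication justification amount to.
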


\begin{lemma}[Elementary bounds for the density]\label{lem:sandwich}
In both cases, $\ \psi_\alpha(x)\le60\,(x+1)^4e^{-\pi x}$ for $x>0$ and $\ \psi_\alpha(x)\ge3\,x^2e^{-\pi x}$ for $x\ge1$.
\end{lemma}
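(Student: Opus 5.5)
The plan is to write $\psi_\alpha(x)=\pi^{-3/2}\cdot4\pi x\bigl(x^2+\frac1{16}\bigr)\tanh(2\pi x)\,|\Gamma(\alpha+ix)|^2$ as in Lemma~\ref{lem:dens} and bound $|\Gamma(\alpha+ix)|^2$ for $\alpha\in\{\frac14,\frac34\}$. Both cases will be sandwiched between the cases $a=\frac12$ and $a=1$, which have closed forms. For $a=\frac12$ we have $|\Gamma(\frac12+ix)|^2=\pi/\cosh\pi x$. For $a=1$ we have $|\Gamma(1+ix)|^2=\pi x/\sinh\pi x$, obtained from $|\Gamma(ix)|^2=\pi/(x\sinh\pi x)$ by multiplying by $x^2$.

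\emph{Upper bound.} By Fact~\ref{fact:P}, $|\Gamma(a+ix)|^2/\Gamma(a)^2$ is nondecreasing in $a$. So for $\alpha\le1$,
\[
|\Gamma(\alpha+ix)|^2\le\frac{\Gamma(\alpha)^2}{\Gamma(1)^2}\,|\Gamma(1+ix)|^2=\Gamma(\alpha)^2\,\frac{\pi x}{\sinh\pi x}.
\]
Here $\Gamma(\frac14)^2\approx13.15$ and $\Gamma(\frac34)^2\approx1.50$, so the case $\alpha=\frac14$ is the worse one. Next I use $\tanh\le1$ and $x/\sinh\pi x\le 2x e^{-\pi x}/(1-e^{-2\pi x})$. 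To avoid the singular-looking factor near $0$, I bound $\pi x/\sinh\pi x\le1$ for $x\le1$ and $\le 2.01\,\pi x e^{-\pi x}$ for $x\ge1$. This gives
\[
\psi_\alpha(x)\le\pi^{-1/2}\cdot4\,x\Bigl(x^2+\tfrac1{16}\Bigr)\cdot13.15\cdot\min\bigl(1,\;2.01\,\pi xe^{-\pi x}\bigr).
\]
For $x\ge1$ this is at most about $4\cdot13.15\cdot2.01\sqrt\pi\,x^2(x^2+\frac1{16})e^{-\pi x}\approx187\,x^2(x^2+\frac1{16})e^{-\pi x}$. That is too big compared with $60(x+1)^4$ only if $x$ is small: at $x=1$, $60\cdot16=960>187\cdot1.07$. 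Since $(x+1)^4\ge16x^4$ and $(x+1)^4\ge 16x^2$ for $x\ge1$, the bound holds. For $0<x\le1$, the left side is at most $\pi^{-1/2}\cdot4\cdot13.15\cdot1.07\approx32$, while the right side is at least $60e^{-\pi x}(x+1)^4$. Near $x=1$ this right side is $\approx41$, and it is minimized somewhere in $[0,1]$ with value still above $32$ (at $x=0$ it is $60$). I plan to check this with $(1+x)^4e^{-\pi x}\ge e^{(4\log2-\pi)x}$ by concavity of $\log(1+x)$ on $[0,1]$: since $4\log 2-\pi<0$, this gives $\ge e^{-0.37}\approx0.69$, hence $60\cdot0.69\approx41>32$.

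\emph{Lower bound for $x\ge1$.} Monotonicity in the other direction, with $\alpha\ge\frac14$ compared to $a=\frac14$, gives no closed form. Instead I compare with $a=\frac12$ from below for $\alpha=\frac34$, and with $a=\frac12$ from above for $\alpha=\frac14$ combined with the reflection pair. Concretely, for $\alpha=\frac34\ge\frac12$,
\[
|\Gamma(\tfrac34+ix)|^2\ge\frac{\Gamma(\frac34)^2}{\pi}\,|\Gamma(\tfrac12+ix)|^2=\frac{\Gamma(\tfrac34)^2}{\cosh\pi x}\ge \Gamma(\tfrac34)^2e^{-\pi x}.
\]
For $\alpha=\frac14$, I use the identity $|\Gamma(\frac14+ix)\Gamma(\frac34+ix)|^2=2\pi^2/\cosh2\pi x$ from Lemma~\ref{lem:dens} together with the upper bound $|\Gamma(\frac34+ix)|^2\le\Gamma(\frac34)^2\pi x/\sinh\pi x$. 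This yields
\[
|\Gamma(\tfrac14+ix)|^2\ge\frac{2\pi\sinh\pi x}{\Gamma(\frac34)^2\,x\cosh2\pi x}.
\]
For $x\ge1$ this is $\gtrsim \frac{2\pi}{1.50\,x}e^{-\pi x}(1-e^{-2\pi})$. Then $\psi_\alpha\ge\pi^{-1/2}\cdot4x\cdot x^2\cdot\tanh(2\pi)\cdot(\cdots)$ gives constants roughly $4\cdot1.50/\sqrt\pi\approx3.39$ times $x^3e^{-\pi x}$ in the first case, and $4\cdot2\pi/(1.50\sqrt\pi)\approx9.4$ times $x^2e^{-\pi x}$ in the second. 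Both are $\ge3x^2e^{-\pi x}$ for $x\ge1$ after absorbing the $\tanh(2\pi)\approx1$ and $1-e^{-2\pi}$ losses.

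\emph{Main obstacle.} The only delicate point is the numerical constant in the lower bound for $\alpha=\frac34$, where $3.39$ versus $3$ leaves little slack. Keeping the factor $x^2+\frac1{16}$ and $\cosh\pi x\le e^{\pi x}$ exactly should suffice, since $\tanh(2\pi)>0.9999$. If it is tight, I would use $\cosh\pi x\le\frac12e^{\pi x}(1+e^{-2\pi})$, which gains a factor close to $2$.
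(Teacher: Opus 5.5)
There is a genuine gap in your upper bound, in the case $\alpha=\frac14$ for large $x$. Comparing $a=\frac14$ with $a=1$ in Fact~\ref{fact:P} gives $|\Gamma(\frac14+ix)|^2\le\Gamma(\frac14)^2\,\pi x/\sinh\pi x\approx2\pi\,\Gamma(\frac14)^2\,x\,e^{-\pi x}$. The true order is $2\pi x^{-1/2}e^{-\pi x}$, so this overshoots by a factor of order $x^{3/2}$. That is why you only get $\psi_{1/4}(x)\le187\,x^2(x^2+\frac1{16})e^{-\pi x}$ for $x\ge1$. But $187\,x^2(x^2+\frac1{16})\le60(x+1)^4$ is false for $x\gtrsim3$, because the two sides grow like $187x^4$ and $60x^4$. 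For example, at $x=10$ the left side is about $1.87\cdot10^6$ and the right side is $60\cdot11^4\approx8.8\cdot10^5$. The inequality you use to close the argument, $(x+1)^4\ge16x^4$ for $x\ge1$, is reversed: it holds exactly when $x\le1$. So your remark that the bound is too big ``only if $x$ is small'' has it backwards; the step fails on the unbounded range, not near $x=1$. Your treatment of $0<x\le1$ is fine. So is the case $\alpha=\frac34$, where the constant is $\Gamma(\frac34)^2\approx1.50$ and you get about $21.4\,x^2(x^2+\frac1{16})e^{-\pi x}$.

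The fix is to compare $a=\frac14$ with $a=\frac12$ instead of $a=1$, as the paper does. This gives $|\Gamma(\frac14+ix)|^2\le\frac{\Gamma(1/4)^2}{\Gamma(1/2)^2}|\Gamma(\frac12+ix)|^2=\Gamma(\frac14)^2/\cosh\pi x\le2\,\Gamma(\frac14)^2e^{-\pi x}$. With $\tanh\le1$ and $x(x^2+\frac1{16})\le(x+1)^3$, you then get $\psi_{1/4}(x)\le\frac8{\sqrt\pi}\Gamma(\frac14)^2(x+1)^3e^{-\pi x}<59.34\,(x+1)^3e^{-\pi x}$ for all $x>0$, with no split at $x=1$. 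The margin of $59.34$ against $60$ shows that this constant, not the lower bound, is the delicate one. Your lower bounds are correct. For $\alpha=\frac34$ you argue exactly as the paper does. For $\alpha=\frac14$ you combine $|\Gamma(\frac14+ix)\Gamma(\frac34+ix)|^2=2\pi^2/\cosh2\pi x$ with your upper bound for $|\Gamma(\frac34+ix)|^2$, getting a constant of about $9.4$. This is a valid alternative to the paper's route, which uses $|\Gamma(\frac14+ix)|^2=|\Gamma(\frac54+ix)|^2/(x^2+\frac1{16})$ and compares $\frac54$ with $1$ (constant about $11.5$).
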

\begin{proof}
We have $|\Gamma(\frac12+ix)|^2=\frac\pi{\cosh\pi x}$ and, from $\Gamma(1+ix)=ix\,\Gamma(ix)$, $|\Gamma(1+ix)|^2=\frac{\pi x}{\sinh\pi x}$. Moreover, for $x>0$,
\[
  e^{-\pi x}\le\frac1{\cosh\pi x}\le2e^{-\pi x},\qquad 2xe^{-\pi x}\le\frac{x}{\sinh\pi x}\le2(x+1)e^{-\pi x};
\]
the last inequality is equivalent to $(x+1)e^{-2\pi x}\le1$.

\emph{Upper bound.} By the monotonicity in Fact~\ref{fact:P} (comparing $a=\frac34$ with $1$, and $a=\frac14$ with $\frac12$),
\begin{align*}
  |\Gamma(\tfrac34+ix)|^2&\le\Gamma(\tfrac34)^2\,|\Gamma(1+ix)|^2\le2\pi\,\Gamma(\tfrac34)^2(x+1)e^{-\pi x},\\
  |\Gamma(\tfrac14+ix)|^2&\le\frac{\Gamma(\frac14)^2}{\Gamma(\frac12)^2}\,|\Gamma(\tfrac12+ix)|^2\le2\,\Gamma(\tfrac14)^2e^{-\pi x}.
\end{align*}
With $\tanh\le1$ and $x(x^2+\frac1{16})\le(x+1)^3$, Lemma~\ref{lem:dens} gives $\psi_{3/4}\le8\sqrt\pi\,\Gamma(\frac34)^2(x+1)^4e^{-\pi x}$ and $\psi_{1/4}\le\frac8{\sqrt\pi}\Gamma(\frac14)^2(x+1)^3e^{-\pi x}$. Since $\Gamma(\frac14)=3.62560\ldots$ and $\Gamma(\frac34)=1.22541\ldots$, we have $8\sqrt\pi\,\Gamma(\frac34)^2<21.3$ and $\frac8{\sqrt\pi}\Gamma(\frac14)^2<59.34$.

\emph{Lower bound ($x\ge1$).} Again by monotonicity (comparing $a=\frac34$ with $\frac12$, and $a=\frac54$ with $1$),
\begin{align*}
  |\Gamma(\tfrac34+ix)|^2&\ge\frac{\Gamma(\frac34)^2}{\Gamma(\frac12)^2}\,|\Gamma(\tfrac12+ix)|^2\ge\Gamma(\tfrac34)^2e^{-\pi x},\\
  |\Gamma(\tfrac14+ix)|^2&=\frac{|\Gamma(\frac54+ix)|^2}{x^2+\frac1{16}}\ge\frac{\Gamma(\frac54)^2\,|\Gamma(1+ix)|^2}{x^2+\frac1{16}}\ge\frac{2\pi\,\Gamma(\frac54)^2\,x\,e^{-\pi x}}{x^2+\frac1{16}} .
\end{align*}
For $x\ge1$, $\tanh2\pi x>0.99$ and $x(x^2+\frac1{16})\ge x^2$, so $\psi_{3/4}\ge0.99\cdot\frac4{\sqrt\pi}\Gamma(\frac34)^2\,x^2e^{-\pi x}$ and $\psi_{1/4}\ge0.99\cdot8\sqrt\pi\,\Gamma(\frac54)^2\,x^2e^{-\pi x}$; the two coefficients exceed $3.35$ and $11.5$.
\end{proof}

\begin{proof}[Proof of Theorem~\ref{thm:perm}]
Since $\log n!=\sum_{k=2}^n\log k$ lies between $\int_1^n\log x\,dx$ and $\int_1^n\log x\,dx+\log n$,
\begin{equation}\label{eq:fact}
  n\log n-n+1\ \le\ \log n!\ \le\ n\log n-n+1+\log n\qquad(n\ge1).
\end{equation}
We also use $\int_0^\infty x^{n}e^{-\pi x}dx=n!\,\pi^{-(n+1)}$.

\emph{Upper bound for the moments.} By $x^2+\frac1{16}\le(x+1)^2$ and Lemma~\ref{lem:sandwich}, for integers $s\ge0$,
\[
  \Ex[\omega^s]\le60\cdot4^s\int_0^\infty(x+1)^{2s+4}e^{-\pi x}dx\le60e^{\pi}\,4^s\int_0^\infty u^{2s+4}e^{-\pi u}du=60e^{\pi}\,4^s\,\frac{(2s+4)!}{\pi^{2s+5}} .
\]
\emph{Lower bound for the moments.} By $x^2+\frac1{16}\ge x^2$ and Lemma~\ref{lem:sandwich},
\[
  \Ex[\omega^s]\ge3\cdot4^s\int_1^\infty x^{2s+2}e^{-\pi x}dx\ge3\cdot4^s\Bigl(\frac{(2s+2)!}{\pi^{2s+3}}-1\Bigr)\ge\frac32\cdot4^s\,\frac{(2s+2)!}{\pi^{2s+3}}\qquad(s\ge4),
\]
where the last step uses $(2s+2)!\ge2\pi^{2s+3}$: For $s=4$, $10!/\pi^{11}>12.3$, and increasing $s$ by one multiplies the ratio by $(2s+3)(2s+4)/\pi^2>1$.

\emph{Substituting.} For $m=2s+2$ ($N$ odd, $W(N)=4\Ex_{\mu_{\mathrm o}}[\omega^s]$ by Theorem~\ref{thm:moment}) and for $m=2s+1$ ($N$ even, $W(N)=2\Ex_{\mu_{\mathrm e}}[\omega^s]$), using $4^s\pi^{-2s}=(\frac2\pi)^m\frac{\pi^2}4$ and $(\frac2\pi)^m\frac\pi2$ respectively,
\begin{align*}
  \frac{3}{2\pi}\Bigl(\frac2\pi\Bigr)^{m}m!\ &\le\ W(N)\ \le\ \frac{60e^\pi}{\pi^3}\Bigl(\frac2\pi\Bigr)^{m}(m+2)!&&(N\text{ odd},\ m\ge10),\\
  \frac{3}{2\pi^2}\Bigl(\frac2\pi\Bigr)^{m}(m+1)!\ &\le\ W(N)\ \le\ \frac{60e^\pi}{\pi^4}\Bigl(\frac2\pi\Bigr)^{m}(m+3)!&&(N\text{ even},\ m\ge9).
\end{align*}
Since $\frac3{2\pi}>0.477$, $\frac{60e^\pi}{\pi^3}<44.8$, $\frac3{2\pi^2}>0.151$ and $\frac{60e^\pi}{\pi^4}<14.3$, the first assertion holds for $m\ge9$. For $N=3,\dots,9$ ($m\le8$), Table~\ref{tab:values} gives $W(N)\big/\bigl((\frac2\pi)^mm!\bigr)\approx4.93$, $7.75$, $10.15$, $13.39$, $16.36$, $20.00$, $23.44$, all between $\frac17$ and $45$.

\emph{Second assertion.} Let $\Theta_N:=N\log\frac{2N}\pi-N$, and note $(\frac2\pi)^m=\frac\pi2(\frac2\pi)^N$, $m!=N!/N$ and $(m+3)!=N!\,(N+1)(N+2)$. By \eqref{eq:fact},
\begin{align*}
  \log W(N)&\ge-\log7+\log\tfrac\pi2+N\log\tfrac2\pi+(N\log N-N+1)-\log N\\
  &\ge\Theta_N-\log N-0.50\ \ge\ \Theta_N-\Lambda,\\
  \log W(N)&\le\log45+\log\tfrac\pi2+N\log\tfrac2\pi+(N\log N-N+1+\log N)+2\log(N+2)\\
  &\le\Theta_N+3\log(N+2)+5.26 .
\end{align*}
For $N\ge3$, $\log(N+2)\le\log(N+1)+\log\frac54\le\Lambda-0.77$ and $\Lambda\ge2.38$, so $\log W(N)\le\Theta_N+3\Lambda+2.95\le\Theta_N+5\Lambda$.
\end{proof}

%======================================================================
\section{Concluding remarks}\label{sec:concl}
We have proved $\log\tau(N)=N\log N-N\log\frac eb+O(\log^4N)$ with $b=2/\pi$, with the explicit error bound $152\Lambda^4$ for every $N\ge3$. The comparison of $\tau(N)$ with $W(N)$ (Sections~\ref{sec:cycles}--\ref{sec:count}) uses only an injection, a count of vertex-disjoint cycles, an inequality on words, and an exchange argument; the formulas for $W(N)$ start from the Motzkin paths of Section~\ref{sec:struct}.

Some questions remain open.
(1) Anick's ratio form $\tau(N)/\tau(N-1)=a+bN+(-1)^Nc/N+\cdots$ and the constants $a$, $c$. If the ratio form holds with remainder $O(N^{-2})$, summing logarithms gives $\log\tau(N)=N\log N-N\log\frac eb+(\frac ab+\frac12)\log N+O(1)$; the ratio form alone does not determine $b$, but together with Theorem~\ref{thm:main} it forces $b=2/\pi$.
(2) An improvement of the error $O(\log^4N)$ to $O(\log N)$, and the polynomial order of $\tau(N)/(N!\,(2/\pi)^N)$.
(3) The distribution of the number of cycles of a uniformly random rainbow map, and a combinatorial interpretation of the acyclic probability $\tau(N)/W(N)$ (which is $0.7297$ for $N=30$).

\section*{Acknowledgments}
I thank Donald E.~Knuth for his detailed comments on an earlier version of this paper.
This work was done with extensive use of AI systems directed by the author; the author is responsible for all of its content.

\appendix
\section{A finer asymptotic formula}\label{sec:fine}
Theorem~\ref{thm:perm} determines $\log W(N)$ up to $O(\log N)$. The moment representation gives the first terms of the full asymptotic expansion.
\begin{theorem}\label{thm:fine}
As $N\to\infty$, $\ W(N)=G_N\bigl(1+\frac{\pi^2}{32N}+O(N^{-2})\bigr)$ with $\ G_N:=\frac4\pi\bigl(\frac2\pi\bigr)^{N}\Gamma\bigl(N+\tfrac32\bigr)$; equivalently
\[
  \log W(N)=N\log\frac{2N}\pi-N+\log N+\log\frac{4\sqrt{2\pi}}\pi+\frac{\varkappa}N+O(N^{-2}),\qquad\varkappa=\frac{11}{24}+\frac{\pi^2}{32}.
\]
\end{theorem}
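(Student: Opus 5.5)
We would start from the unified formula \eqref{eq:unified}. Write $s=\lfloor N/2\rfloor$ and $\gamma=\frac14+\{\frac N2\}$, so $\gamma=\frac14$ for $N$ even and $\gamma=\frac34$ for $N$ odd. Then
\[
W(N)=\frac{2^{N+1}}{\sqrt\pi}\int_0^\infty x\tanh(2\pi x)\,(x^2+\tfrac1{16})^{s}\,|\Gamma(\gamma+ix)|^2\,dx .
\]
The mass of the integral sits near $x\approx 2s/\pi\approx N/\pi$, which is large. There we replace each factor by its large-$x$ expansion.

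\emph{Expanding the integrand.} First, $\tanh(2\pi x)=1+O(e^{-4\pi x})$. Second, Stirling's formula for $\log\Gamma$ at complex argument (Fact~\ref{fact:S}, presumably) gives
\[
|\Gamma(\gamma+ix)|^2=2\pi\,x^{2\gamma-1}e^{-\pi x}\bigl(1+c_\gamma x^{-2}+O(x^{-4})\bigr),
\]
with an explicit $c_\gamma$. This follows from $2\,\mathrm{Re}\log\Gamma(\gamma+ix)$: the $\frac1{12z}$ term contributes at order $x^{-2}$, and so does the expansion of $(\gamma-\frac12)\log(\gamma+ix)$. Third,
\[
(x^2+\tfrac1{16})^s=x^{2s}\exp\bigl(s\log(1+\tfrac1{16x^2})\bigr)=x^{2s}\bigl(1+\tfrac{s}{16x^2}+O(s^2x^{-4})\bigr).
\]
Near the peak $s/x^2\asymp 1/N$, so this correction is of relative order $1/N$, not smaller.

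\emph{Integrating term by term.} Each main term has the form $x^{n}e^{-\pi x}$ and integrates exactly as $\int_0^\infty x^n e^{-\pi x}dx=\Gamma(n+1)\pi^{-(n+1)}$, also for non-integer $n$. Since $2s+2\gamma=N+\frac12$ in both parities, the leading term is
\[
\frac{2^{N+1}}{\sqrt\pi}\,2\pi\,\Gamma\bigl(N+\tfrac32\bigr)\,\pi^{-N-3/2}=\frac4\pi\Bigl(\frac2\pi\Bigr)^N\Gamma\bigl(N+\tfrac32\bigr)=G_N.
\]
This matches $G_N$ exactly. The $x^{-2}$ corrections produce $\Gamma(N-\frac12)\pi^{-N+1/2}$, whose relative size is $\pi^2/((N+\frac12)(N-\frac12))\approx\pi^2/N^2$. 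So the $c_\gamma$ term is $O(N^{-2})$. The $s/(16x^2)$ term, however, contributes
\[
\frac{s\pi^2}{16(N+\frac12)(N-\frac12)}=\frac{\pi^2}{32N}+O(N^{-2}),
\]
using $s=N/2+O(1)$. This is exactly the stated correction, and it is parity-independent to this order.

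\emph{Error terms.} The $O(s^2x^{-4})$ remainder gives $O(N^{-2})$ in the same way. To justify the expansions rigorously, we would split the integral at $x=1$, or at $x=\sqrt N$. The region $x\le\sqrt N$ contributes relatively $e^{-\Omega(N\log N)}$ compared with $G_N$, by the density bounds of Lemma~\ref{lem:sandwich}. On $x\ge\sqrt N$ the expansions hold uniformly with explicit remainders. The $\log$ form then follows from Stirling's formula for $\log\Gamma(N+\frac32)$:
\[
(N+1)\log N-N+\tfrac12\log 2\pi+\frac{\frac{1}{24}+\frac{3}{8}}{N}+\cdots .
\]
Combined with $\log\frac4\pi$ and $\log(1+\pi^2/(32N))$, this should give $\varkappa=\frac{11}{24}+\frac{\pi^2}{32}$, which serves as a consistency check.

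The main obstacle is the bookkeeping that keeps the remainder uniformly $O(N^{-2})$. The point is to expand $(1+\frac1{16x^2})^s$ to second order with a uniform bound for $x\ge\sqrt N$, where $s/x^2\le 1$ still holds. The complex Stirling remainder must be controlled uniformly in $x$ on the same region.
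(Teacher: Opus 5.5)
Your proposal is correct and is essentially the paper's argument: Stirling gives $|\Gamma(\gamma+ix)|^2=2\pi x^{2\gamma-1}e^{-\pi x}(1+O(x^{-2}))$, you expand $(1+\frac1{16x^2})^s=1+\frac s{16x^2}+O(s^2x^{-4})$ and integrate term by term against $x^ne^{-\pi x}$, and the small-$x$ range is negligible; the only difference is that you run this once on the unified formula \eqref{eq:unified}, whereas the paper treats $\mu_{\mathrm o}$ and $\mu_{\mathrm e}$ separately via Lemmas~\ref{lem:densasym} and~\ref{lem:watson}. There is one arithmetic slip in the last step: the $\frac1{12z}$ term of Stirling's formula contributes $\frac1{12N}$, not $\frac1{24N}$, so the $1/N$ coefficient of $\log\Gamma(N+\frac32)$ is $\frac38+\frac1{12}=\frac{11}{24}$, and your $\frac1{24}+\frac38=\frac5{12}$ would not reproduce $\varkappa$.
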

\begin{fact}[Stirling's formula; {\cite[(5.11.1)]{dlmf}}]\label{fact:S}
As $z\to\infty$ with $|\arg z|\le\pi-\delta$, $\ \log\Gamma(z)=(z-\frac12)\log z-z+\frac12\log2\pi+\frac1{12z}+O(|z|^{-3})$.
\end{fact}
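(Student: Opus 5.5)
We prove the expansion first for real $z>0$ in an exact form (Stieltjes' integral representation), observe that this form continues analytically to the slit plane, and then bound the remainder uniformly in the sector $|\arg z|\le\pi-\delta$.

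\emph{Step 1: an exact formula from Euler--Maclaurin.} Start from $\log\Gamma(z+n)=\log\Gamma(z)+\sum_{k=0}^{n-1}\log(z+k)$ for real $z>0$. Apply Euler--Maclaurin to $\sum_{k=0}^{n-1}\log(z+k)$ with the periodic Bernoulli function $\tilde B_1(t)=\{t\}-\frac12$. Then compare with Stirling-free information about $\log\Gamma(z+n)$ as $n\to\infty$: either Gauss' product $\Gamma(z)=\lim_n n!\,n^{z}/(z(z+1)\cdots(z+n))$, or log-convexity (Bohr--Mollerup). This gives, with a constant $C$ independent of $z$,
\[
  \log\Gamma(z)=\Bigl(z-\tfrac12\Bigr)\log z-z+C-\int_0^\infty\frac{\tilde B_1(t)}{z+t}\,dt .
\]
To identify $C=\frac12\log2\pi$, I would plug this formula into the duplication formula $\Gamma(2z)=\pi^{-1/2}2^{2z-1}\Gamma(z)\Gamma(z+\frac12)$ and let $z\to\infty$; the integral terms tend to $0$. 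Wallis' product would serve equally well.

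\emph{Step 2: analytic continuation.} The integral converges for every $z\in\mathbb C\setminus(-\infty,0]$. Writing $\tilde B_1=\tilde B_2'/2$ and integrating by parts once shows that the convergence is locally uniform, because $\tilde B_2$ is bounded. So the right side is analytic in the slit plane, as is $\log\Gamma$ (principal branch, real on $z>0$). Hence the identity holds on the whole slit plane.

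\emph{Step 3: extract the terms.} Integrate by parts three more times, using $\tilde B_{j+1}'=(j+1)\tilde B_j$ and $\tilde B_j(0)=B_j$ for $j\ge2$. The boundary terms are $\frac{B_2}{2z}=\frac1{12z}$ and a multiple of $B_3/z^2$, which vanishes. What remains is
\[
  \int_0^\infty\frac{\tilde B_4(t)-B_4}{4\,(z+t)^4}\,dt ,
\]
where subtracting the constant $B_4$ is harmless because it only adds a term $B_4/(12z^3)$, itself $O(|z|^{-3})$. In any case $|\tilde B_4|\le1$.

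\emph{Step 4: the sector bound.} This is the only step that needs care. For $|\arg z|\le\pi-\delta$ and $t\ge0$, the angle between $z$ and $t$ is at most $\pi-\delta$. By the law of cosines,
\[
  |z+t|\ge\sin(\delta/2)\,(|z|+t).
\]
Therefore the remainder is bounded by
\[
  \frac{1}{4\sin^4(\delta/2)}\int_0^\infty\frac{dt}{(|z|+t)^4}=\frac{1}{12\sin^4(\delta/2)\,|z|^3},
\]
which is $O(|z|^{-3})$ uniformly in the sector. This proves the stated formula. The main obstacle is bookkeeping rather than ideas: one must carry the constant through Step 1 correctly and avoid the Stieltjes formula degenerating near the negative real axis. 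The explicit $\sin(\delta/2)$ bound handles the latter.
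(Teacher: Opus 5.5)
Your proof is correct. It is the classical derivation: Euler--Maclaurin with periodic Bernoulli functions, i.e.\ Stieltjes' form of Binet's remainder.

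The paper gives no proof of this statement. Fact~\ref{fact:S} is quoted from DLMF (5.11.1) and enters the Lean formalization as an axiom. It is used only in Appendix~\ref{sec:fine}: in Lemma~\ref{lem:densasym} with $z=\alpha+ix$, $x\to\infty$, and for $\log\Gamma(N+\frac32)$ with real argument.

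Citing costs nothing and keeps the analytic input confined to a few standard identities. Your derivation makes the fact self-contained modulo the Gauss product (or Bohr--Mollerup) and the duplication formula, which the paper already cites in Lemma~\ref{lem:dens}. It also yields an explicit remainder constant directly: since $0\le\tilde B_4(t)-B_4=\{t\}^2(1-\{t\})^2\le\frac1{16}$, your Step~4 gives an error of at most $\bigl(192\sin^4(\delta/2)\,|z|^3\bigr)^{-1}$. For the paper's uses, the sector $|\arg z|\le\pi/2$ suffices and only real parts of $\log\Gamma$ are taken. So the full sector uniformity and the branch question go beyond what is needed there.

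Two small corrections, neither affecting the bound:
\begin{enumerate}
\item The final remainder is $-\frac14\int_0^\infty(\tilde B_4(t)-B_4)(z+t)^{-4}\,dt$, with a minus sign. After subtracting $B_4$, the boundary term $B_4/(12z^3)$ cancels exactly against $-\frac{B_4}4\int_0^\infty(z+t)^{-4}dt$ rather than being added.
\item In Step~2, ``principal branch'' must mean the branch of $\log\Gamma$ that is continuous on $\mathbb C\setminus(-\infty,0]$ and real on $(0,\infty)$. This branch exists because $\Gamma$ has no zeros and the slit plane is simply connected. It is not $\log|\Gamma|+i\arg\Gamma$ with the principal argument, which is discontinuous in the slit plane. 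For that function the formula would fail, since the right side has imaginary part growing like $x\log x$ along $z=\alpha+ix$.
\end{enumerate}
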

\begin{lemma}\label{lem:densasym}
As $x\to\infty$, $\ \psi_\alpha(x)=8\sqrt\pi\,x^{2\alpha+2}e^{-\pi x}\bigl(1+O(x^{-2})\bigr)$.
\end{lemma}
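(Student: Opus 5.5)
The plan is to begin from the explicit formula of Lemma~\ref{lem:dens}, $\psi_\alpha(x)=\frac4{\sqrt\pi}\,x\bigl(x^2+\frac1{16}\bigr)\tanh(2\pi x)\,|\Gamma(\alpha+ix)|^2$, and expand each factor as $x\to\infty$. The two elementary factors are immediate. First, $\tanh(2\pi x)=1-O(e^{-4\pi x})$, which is $1+O(x^{-2})$. Second, $x(x^2+\frac1{16})=x^3(1+O(x^{-2}))$. The whole content is therefore the asymptotics of $|\Gamma(\alpha+ix)|^2$ for fixed real $\alpha\in\{\frac14,\frac34\}$. The target is
\[
  |\Gamma(\alpha+ix)|^2=2\pi\,x^{2\alpha-1}e^{-\pi x}\bigl(1+O(x^{-2})\bigr).
\]
Combined with the prefactor this gives $\frac4{\sqrt\pi}\cdot2\pi\,x^{3+2\alpha-1}e^{-\pi x}=8\sqrt\pi\,x^{2\alpha+2}e^{-\pi x}$, as claimed.

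For the Gamma factor I would apply Fact~\ref{fact:S} with $z=\alpha+ix$, which lies in a fixed sector $|\arg z|\le\pi/2$. Since $\Gamma(\bar z)=\overline{\Gamma(z)}$, we have $\log|\Gamma(z)|^2=2\,\mathrm{Re}\log\Gamma(z)$. Write $\log z=\log|z|+i\arg z$, with $|z|=\sqrt{x^2+\alpha^2}$ and $\arg z=\frac\pi2-\arctan(\alpha/x)$. Then
\[
  \mathrm{Re}\bigl[(z-\tfrac12)\log z\bigr]=(\alpha-\tfrac12)\log|z|-x\arg z .
\]
Substituting the expansions:
\begin{itemize}
\item $\log|z|=\log x+O(x^{-2})$;
\item $x\arg z=\frac\pi2x-\alpha+\frac{\alpha^3}{3x^2}+O(x^{-4})$;
\item $\mathrm{Re}(-z)=-\alpha$;
\item $\mathrm{Re}\frac1{12z}=\frac{\alpha}{12(x^2+\alpha^2)}=O(x^{-2})$.
\end{itemize}
The two constants $+\alpha$ and $-\alpha$ cancel, leaving
\[
  \mathrm{Re}\log\Gamma(z)=(\alpha-\tfrac12)\log x-\tfrac\pi2x+\tfrac12\log2\pi+O(x^{-2}).
\]
Doubling and exponentiating gives the target, because $e^{O(x^{-2})}=1+O(x^{-2})$.

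As a sanity check, for $\alpha=\frac12$ this reproduces $\pi/\cosh\pi x\sim2\pi e^{-\pi x}$. The main obstacle is the bookkeeping of the $O(x^{-1})$ terms, since the error must be $O(x^{-2})$ and not merely $O(x^{-1})$. This works because every first-order correction is either real or purely imaginary in the right way: $\log|z|$ has no $x^{-1}$ term, $x\arg z$ has no $x^{-1}$ term, and the $\frac1{12z}$ term has real part of order $x^{-2}$. I would verify these three points explicitly, and then collect the factors.
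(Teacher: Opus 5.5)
Your proposal is correct and is essentially the paper's proof: you apply Stirling's formula (Fact~\ref{fact:S}) to $z=\alpha+ix$ and take real parts with $\arg z=\frac\pi2-\arctan\frac\alpha x$, so the constants $\pm\alpha$ cancel and every remaining correction is $O(x^{-2})$, giving $|\Gamma(\alpha+ix)|^2=2\pi x^{2\alpha-1}e^{-\pi x}(1+O(x^{-2}))$. You then multiply by the elementary factors of Lemma~\ref{lem:dens}, and your constant $\frac4{\sqrt\pi}\cdot2\pi=8\sqrt\pi$ agrees with the paper's.
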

\begin{proof}
Take $z=\alpha+ix$ in Fact~\ref{fact:S} and take real parts. With $\theta:=\arctan\frac\alpha x$ we have $\arg z=\frac\pi2-\theta$, $\log|z|=\log x+O(x^{-2})$, $x\theta=\alpha+O(x^{-2})$ and $\mathrm{Re}\frac1{12z}=O(x^{-2})$, so
\begin{align*}
  \log|\Gamma(\alpha+ix)|&=\bigl(\alpha-\tfrac12\bigr)\log|z|-x\bigl(\tfrac\pi2-\theta\bigr)-\alpha+\tfrac12\log2\pi+O(x^{-2})\\
  &=\bigl(\alpha-\tfrac12\bigr)\log x-\tfrac{\pi x}2+\tfrac12\log2\pi+O(x^{-2}).
\end{align*}
Hence $|\Gamma(\alpha+ix)|^2=2\pi x^{2\alpha-1}e^{-\pi x}(1+O(x^{-2}))$. Multiply by $x^2+\frac1{16}=x^2(1+O(x^{-2}))$ and $\tanh2\pi x=1+O(e^{-4\pi x})$, and note $\pi^{-3/2}\cdot4\pi\cdot2\pi=8\sqrt\pi$.
\end{proof}
\begin{lemma}[Asymptotics of moments]\label{lem:watson}
Let $\psi(x)=Cx^{\nu}e^{-\pi x}(1+\eta(x))$ be a probability density with $C,\nu>0$ and $|\eta(x)|\le K/x^2$ for $x\ge x_0$. Then, as $s\to\infty$,
\[
  \int_0^\infty\bigl(4(x^2+\tfrac1{16})\bigr)^{s}\psi(x)\,dx=C\,4^{s}\,\frac{\Gamma(2s+\nu+1)}{\pi^{2s+\nu+1}}\Bigl(1+\frac{\pi^2}{64s}+O(s^{-2})\Bigr).
\]
\end{lemma}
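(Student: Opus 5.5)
The plan is a direct Laplace-type expansion. We factor $\bigl(4(x^2+\tfrac1{16})\bigr)^s=4^s x^{2s}\bigl(1+\tfrac1{16x^2}\bigr)^s$. The bulk of the integral sits near $x\approx(2s+\nu)/\pi$, which is large. There the correction factor $(1+\frac1{16x^2})^s$ is $1+\frac{s}{16x^2}+O(s^2x^{-4})$, and the $\eta$-term is $O(x^{-2})$. So everything reduces to Gamma integrals $\int_0^\infty x^{2s+\nu-2j}e^{-\pi x}\,dx=\Gamma(2s+\nu+1-2j)\,\pi^{-(2s+\nu+1-2j)}$. Relative to the $j=0$ term these carry the factors $\pi^{2j}\big/\bigl((2s+\nu)(2s+\nu-1)\cdots\bigr)=O(s^{-2j})$.

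\emph{Step 1 (splitting).} Fix $x_1:=\max(x_0,\sqrt s)$ and split the integral at $x_1$. On $[0,x_1]$ we use only that $\psi$ is a probability density. The integrand is then at most $4^s(x_1^2+\frac1{16})^s=e^{s\log s+O(s)}$. Since $C\,4^s\Gamma(2s+\nu+1)\pi^{-2s-\nu-1}=e^{2s\log s+O(s)}$, this piece is negligible, with relative error $e^{-s\log s+O(s)}=O(s^{-2})$. By the same comparison, the pieces of the model integrals $C\int_0^{x_1}4^s x^{2s+\nu-2j}e^{-\pi x}\,dx$ for $j=0,1$ are negligible too. This lets us extend the main terms back to $[0,\infty)$ freely.

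\emph{Step 2 (the main range $x\ge x_1$).} Here $y:=s\log(1+\frac1{16x^2})\le\frac{s}{16x^2}\le\frac1{16}$. Hence
\[
(1+\tfrac1{16x^2})^s=1+\tfrac{s}{16x^2}+O\bigl(\tfrac{s}{x^4}+\tfrac{s^2}{x^4}\bigr),
\]
using $|e^y-1-y|\le y^2e^{1/16}$ and $y=\frac{s}{16x^2}+O(s/x^4)$. Multiplying by $1+\eta(x)$ with $|\eta|\le K/x^2$ gives
\[
1+\frac{s}{16x^2}+O\bigl(x^{-2}+s^2x^{-4}+s^{3}x^{-6}\bigr),
\]
where the last term bounds the cross term $\frac{s}{16x^2}\eta$ generously. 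Integrating against $C4^sx^{2s+\nu}e^{-\pi x}$ and extending to $[0,\infty)$ by Step 1 gives three kinds of contributions:
\begin{itemize}
\item the leading term;
\item the correction $\frac{s}{16}\cdot\frac{\pi^2}{(2s+\nu)(2s+\nu-1)}=\frac{\pi^2}{64s}+O(s^{-2})$;
\item error terms of relative size $O(s^{-2})$, $O(s^2\cdot s^{-4})$ and $O(s^3\cdot s^{-6})$, all $O(s^{-2})$.
\end{itemize}
This yields the claimed expansion $C\,4^s\Gamma(2s+\nu+1)\pi^{-2s-\nu-1}\bigl(1+\frac{\pi^2}{64s}+O(s^{-2})\bigr)$.

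\emph{Main obstacle.} The only delicate point is uniformity. The expansion of $(1+\frac1{16x^2})^s$ is valid only where $s/x^2$ is bounded, and the hypothesis on $\eta$ holds only for $x\ge x_0$. This is why the cut at $\sqrt s$ is needed, and why the crude bound $e^{s\log s+O(s)}$ against the superexponential $\Gamma(2s)$ must be made explicit in Step 1. Once that comparison is checked (for instance via Stirling, Fact~\ref{fact:S}, or just $\Gamma(2s+\nu+1)\ge(2s/e)^{2s}$), the remaining computations are routine ratios of Gamma functions.
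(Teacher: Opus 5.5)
Your proposal is correct and follows essentially the same route as the paper. You cut at a power of $s$ (you take $\sqrt s$, the paper takes $s^{2/3}$), bound the piece below the cut using only that $\psi$ is a probability density against the superexponential size of $\Gamma(2s+\nu+1)$, and on the rest expand $(1+\frac1{16x^2})^s=1+\frac{s}{16x^2}+O(s^2x^{-4})$ and reduce to Gamma ratios; one small slip is that the cross term $\frac{s}{16x^2}\eta=O(sx^{-4})$ is covered by your $s^2x^{-4}$ term, not by $s^3x^{-6}$, but your stated error bound still holds.
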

\begin{proof}
Let $n:=2s+\nu$ and $g_j:=\int_0^\infty x^{n-j}e^{-\pi x}dx=\Gamma(n-j+1)\,\pi^{-(n-j+1)}$, so that $\frac{g_2}{g_0}=\frac{\pi^2}{n(n-1)}$ and $\frac{g_4}{g_0}=\frac{\pi^4}{n(n-1)(n-2)(n-3)}$. Let $\xi:=s^{2/3}$, and let $s$ be large enough that $\xi\ge\max(1,x_0)$ and $\pi\xi\le n-4$.

\emph{(a) The model integrals have negligible mass on $[0,\xi]$.} For $0\le j\le4$, $x^{n-j}e^{-\pi x}$ increases on $[0,(n-j)/\pi]\supseteq[0,\xi]$, so $\int_0^\xi x^{n-j}e^{-\pi x}dx\le\xi^{n+1}$. Since $\Gamma(n+1)\ge\int_n^\infty x^ne^{-x}dx\ge(n/e)^n$, we have $g_0\ge\pi^{-(n+1)}(n/e)^n$ and
\[
  \frac{\xi^{n+1}}{g_0}\ \le\ \delta_s:=\pi\xi\Bigl(\frac{e\pi\xi}{n}\Bigr)^{n},\qquad\frac{e\pi\xi}{n}\le\frac{e\pi}2\,s^{-1/3}\to0 ,
\]
so $\delta_s$ is smaller than any negative power of $s$.

\emph{(b) The range $x\ge\xi$.} Let $y:=\frac1{16x^2}$; then $\bigl(4(x^2+\frac1{16})\bigr)^s=4^sx^{2s}(1+y)^s$ and $0\le sy\le\frac1{16}$. With $u:=s\log(1+y)$, the bounds $y-\frac{y^2}2\le\log(1+y)\le y$ give $sy-\frac{sy^2}2\le u\le sy\le1$, and $1+u\le e^u\le1+u+u^2$ gives
$(1+y)^s=1+sy+\mathcal R_1$ with $|\mathcal R_1|\le\tfrac12sy^2+s^2y^2\le2s^2y^2$.
Including the factor $1+\eta(x)$, for $x\ge\xi$,
\[
  \bigl(4(x^2+\tfrac1{16})\bigr)^{s}\psi(x)=C\,4^{s}x^{n}e^{-\pi x}\Bigl[1+\frac{s}{16x^2}+\mathcal R(x)\Bigr],\qquad
  |\mathcal R(x)|\le K'\Bigl(\frac{s^2}{x^4}+\frac1{x^2}\Bigr),
\]
with $K'$ depending only on $K$. Integrate over $[\xi,\infty)$; by (a), replacing $\int_\xi^\infty x^{n-j}e^{-\pi x}dx$ by $g_j$ ($j=0,2,4$) costs at most $g_0\delta_s$. Thus
\[
  \int_\xi^\infty\bigl(4(x^2+\tfrac1{16})\bigr)^{s}\psi\,dx
  =C4^{s}g_0\Bigl[1+\frac s{16}\frac{g_2}{g_0}+O\Bigl(s^2\frac{g_4}{g_0}+\frac{g_2}{g_0}+s^2\delta_s\Bigr)\Bigr]
  =C4^{s}g_0\Bigl[1+\frac{\pi^2}{64s}+O(s^{-2})\Bigr],
\]
because $\frac{s}{16}\cdot\frac{\pi^2}{n(n-1)}-\frac{\pi^2}{64s}=\frac{\pi^2}{16}\cdot\frac{4s^2-n(n-1)}{4s\,n(n-1)}=O(s^{-2})$, $s^2g_4/g_0=O(s^{-2})$ and $g_2/g_0=O(s^{-2})$.

\emph{(c) The range $x\le\xi$.} Since $\psi$ is a probability density,
$\int_0^\xi\bigl(4(x^2+\frac1{16})\bigr)^s\psi\,dx\le4^s\xi^{2s}\bigl(1+\frac1{16\xi^2}\bigr)^s\le e^{1/16}4^s\xi^{2s}\le e^{1/16}4^{s}g_0\delta_s$
(as $\xi\ge1$ and $2s\le n+1$), which is again negligible. Adding (b) and (c) proves the lemma.
\end{proof}
\begin{proof}[Proof of Theorem~\ref{thm:fine}]
By Lemma~\ref{lem:densasym}, Lemma~\ref{lem:watson} applies with $C=8\sqrt\pi$ and $\nu=2\alpha+2$. By Theorem~\ref{thm:moment}, for $m=2s+2$ (odd $N$) $W(N)=4\cdot8\sqrt\pi\,4^{s}\Gamma(2s+\frac92)\pi^{-(2s+9/2)}(1+\frac{\pi^2}{64s}+O(s^{-2}))$, and for $m=2s+1$ (even $N$) $W(N)=2\cdot8\sqrt\pi\,4^{s}\Gamma(2s+\frac72)\pi^{-(2s+7/2)}(1+\frac{\pi^2}{64s}+O(s^{-2}))$.
Both equal $\frac8{\pi^2}(\frac2\pi)^m\Gamma(m+\frac52)(1+\frac{\pi^2}{64s}+O(s^{-2}))$ with $s=\frac m2+O(1)$; substituting $m=N-1$ gives the first form.
For the second, Fact~\ref{fact:S} gives $\log\Gamma(N+\frac32)=(N+1)\log N-N+\frac12\log2\pi+\frac{11}{24N}+O(N^{-2})$, because $(N+1)\log(1+\frac3{2N})=\frac32+\frac3{8N}+O(N^{-2})$ and $\frac38+\frac1{12}=\frac{11}{24}$.
\end{proof}
\begin{table}[h]\centering\small
\caption{Numerical check of Theorem~\ref{thm:fine}. Here $\varepsilon_N:=\log W(N)-\bigl(N\log\frac{2N}\pi-N+\log N+\log\frac{4\sqrt{2\pi}}\pi\bigr)$; the theorem implies $N\varepsilon_N\to\varkappa=0.766758\ldots$ and $N(\log W(N)-\log G_N)\to\frac{\pi^2}{32}=0.308425\ldots$, and that the fourth column is bounded. (Computed with $50$ decimal digits.)}\label{tab:asym}
\begin{tabular}{@{}rrrrr@{}}
\toprule
$N$ & $\log W(N)$ & $N\varepsilon_N$ & $N^2(\varepsilon_N-\varkappa/N)$ & $N\bigl(\log W(N)-\log G_N\bigr)$\\
\midrule
16 & 25.117290 & 0.769631 & $+0.0460$ & 0.318942\\
17 & 27.524542 & 0.732785 & $-0.5776$ & 0.281655\\
64 & 178.598599 & 0.767270 & $+0.0328$ & 0.310879\\
65 & 182.328845 & 0.757767 & $-0.5845$ & 0.301346\\
256 & 1054.668929 & 0.766876 & $+0.0301$ & 0.309030\\
257 & 1059.768352 & 0.764475 & $-0.5868$ & 0.306628\\
1024 & 5619.499162 & 0.766787 & $+0.0294$ & 0.308576\\
1025 & 5625.980514 & 0.766185 & $-0.5874$ & 0.307974\\
4096 & 28133.365914 & 0.766766 & $+0.0293$ & 0.308463\\
4097 & 28141.232464 & 0.766615 & $-0.5876$ & 0.308312\\
\bottomrule
\end{tabular}

\end{table}

\section{A second proof of the alternating sum}\label{sec:involution}
This appendix derives the alternating sum directly from $W(N)=\per M$ by a sign-reversing involution, without Motzkin paths. Write $M=\JJ+U$, where $\JJ$ is the all-ones matrix. For $N=7$ ($m=6$) the ranks of the rows are $(1,2,3,3,2,1)$ and
\[
  M=\begin{pmatrix}2&1&1&1&1&1\\2&2&1&1&1&0\\2&2&2&1&0&0\\2&2&2&1&0&0\\2&2&1&1&1&0\\2&1&1&1&1&1\end{pmatrix},\qquad
  U=\begin{pmatrix}1&0&0&0&0&0\\1&1&0&0&0&-1\\1&1&1&0&-1&-1\\1&1&1&0&-1&-1\\1&1&0&0&0&-1\\1&0&0&0&0&0\end{pmatrix}.
\]
Column $6$ is the negative of column $2$, column $5$ the negative of column $3$, column $4$ is zero and column $1$ is all ones; the nonzero rows of column $2$ form $C_2=\{2,3,4,5\}$, those of column $3$ form $C_3=\{3,4\}\subset C_2$. In general:

\begin{lemma}\label{lem:colpair}
\begin{enumerate}[label=(\roman*),leftmargin=2.2em,itemsep=1pt]
\item $U_{u,d}=[\rho(u)\ge d]-[\rho(u)\ge N+1-d]$.
\item For $2\le d\le m$, column $N+1-d$ is the negative of column $d$. Column $1$ is all ones; for odd $N$, column $\frac{N+1}2$ is zero.
\item For $2\le d\le R$, $U_{u,d}=[u\in C_d]$ with $C_d:=\{d,\dots,N-d\}$, $|C_d|=N+1-2d$, and $C_R\subset\cdots\subset C_3\subset C_2$.
      So there are $R-1$ column pairs $\{d,N+1-d\}$ ($2\le d\le R$), whose supports have sizes $2,4,\dots,m-2$ ($m$ even) or $1,3,\dots,m-2$ ($m$ odd).
\end{enumerate}
\end{lemma}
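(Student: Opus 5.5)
The plan is to compute everything directly from the definition $M_{u,d}=[d\le u]+[d\le N-u]$, so $U_{u,d}=M_{u,d}-1$, working in terms of the rank $\rho(u)=\min(u,N-u)$. Since $\{u,N-u\}=\{\rho(u),N-\rho(u)\}$, we have $M_{u,d}=[d\le\rho(u)]+[d\le N-\rho(u)]$. The second indicator equals $1-[d\ge N-\rho(u)+1]=1-[\rho(u)\ge N+1-d]$. Subtracting $1$ gives (i) at once.

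For (ii), substitute $d\mapsto N+1-d$ into (i). The two indicators swap roles, so $U_{u,N+1-d}=[\rho(u)\ge N+1-d]-[\rho(u)\ge d]=-U_{u,d}$ for every $d$ in range. For $2\le d\le m$ the index $N+1-d$ also lies in $\{2,\dots,m\}$, which proves the pairing. For $d=1$ we have $[\rho(u)\ge1]=1$ and $[\rho(u)\ge N]=0$, since $\rho(u)\le N/2<N$; so column $1$ is all ones. For odd $N$ and $d=(N+1)/2$ we get $d=N+1-d$, so the column equals its own negative and is therefore zero.

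For (iii), take $2\le d\le R$. Then $N+1-d\ge N+1-R>R\ge\rho(u)$, so the second indicator vanishes and $U_{u,d}=[\rho(u)\ge d]$. The condition $\min(u,N-u)\ge d$ is equivalent to $d\le u\le N-d$, so the support is $C_d$ with $|C_d|=N+1-2d$, and these sets are nested decreasing in $d$. The columns not yet accounted for are $1$, the pairs $\{d,N+1-d\}$ with $2\le d\le R$, and, for odd $N$, the zero middle column; a quick count confirms that these exhaust $\{1,\dots,m\}$. As $d$ runs over $2,\dots,R$, the sizes $N+1-2d$ run over $N-3=m-2,\,m-4,\dots$ down to $N+1-2R$. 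That last value is $1$ when $N$ is odd (so $m$ is even) and $2$ when $N$ is even, which matches the stated lists after the parity relabeling; I will write this relabeling out carefully, since it is where a slip is most likely. There is no real obstacle here; the only care needed is the boundary bookkeeping in (ii) and (iii).
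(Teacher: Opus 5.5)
Your argument for (i), (ii) and the support computation in (iii) is the same as the paper's. You rewrite $M_{u,d}$ using $\{u,N-u\}=\{\rho,N-\rho\}$, observe that $d\mapsto N+1-d$ swaps the two brackets, and kill the second bracket for $d\le R$ via $N+1-d>N/2\ge\rho(u)$. Your extra check that column $1$, the $R-1$ pairs and (for odd $N$) the middle column exhaust $\{1,\dots,m\}$ is correct and harmless.

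The one real error is in the final bookkeeping, exactly where you expected a slip. You say the last size $N+1-2R$ is $1$ for odd $N$ and $2$ for even $N$, but it is the reverse. For $N=2R+1$ it equals $2$, and for $N=2R$ it equals $1$. A parity check shows this at once: every size $N+1-2d$ has the parity of $N+1$, which is the parity of $m$, so for odd $N$ all sizes are even and cannot end at $1$. For example, $N=7$ gives $C_2=\{2,\dots,5\}$ and $C_3=\{3,4\}$, of sizes $4$ and $2$.

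With the correct values, the sizes are $m-2,m-4,\dots,2$ when $m$ is even and $m-2,\dots,3,1$ when $m$ is odd. This matches the statement directly, so there is no ``parity relabeling'' to perform. As written, your appeal to one covers a swapped case split rather than resolving it. Replace that sentence with the correct evaluation of $N+1-2R$ and the proof is complete.
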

\begin{proof}
(i) $\{u,N-u\}=\{\rho,N-\rho\}$ with $\rho=\rho(u)$, so $M_{u,d}=[d\le\rho]+[d\le N-\rho]$, and $[d\le N-\rho]-1=-[\rho\ge N+1-d]$.
(ii) Replacing $d$ by $N+1-d$ in (i) exchanges the two brackets. For $d=1$, $\rho\ge1$ always holds and $\rho\ge N$ never does. If $d=N+1-d$ the column equals its own negative.
(iii) For $d\le R$ we have $N+1-d>N/2\ge\rho$, so the second bracket vanishes, and $\rho(u)\ge d$ if and only if $d\le u\le N-d$.
\end{proof}

\begin{theorem}[Alternating sum, second form]\label{thm:altsum2}
Let $r_j$ be the number of ways to choose $j$ of the $R-1$ column pairs and one row in each of the chosen $2j$ columns, the rows being distinct and both rows chosen for pair $d$ lying in $C_d$ ($r_0=1$). Then
\[
  W(N)=2\sum_{j=0}^{R-1}(-1)^j\,r_j\,(m-2j)!\qquad(N\ge2).
\]
\end{theorem}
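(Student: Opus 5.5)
We expand $\per M=\per(\JJ+U)$ by multilinearity in the columns. For each set $S$ of columns we take the $U$-columns on $S$ and the $\JJ$-columns on the complement. Then $\per(\JJ+U)=\sum_{S}\sum_{\phi}\prod_{d\in S}U_{\phi(d),d}\cdot(m-|S|)!$. Here $\phi$ ranges over injections from $S$ to rows, and the factor $(m-|S|)!$ counts the ways to fill the remaining columns from the all-ones matrix with the remaining rows. By Lemma~\ref{lem:colpair}(ii), column $1$ of $U$ is all ones and column $\frac{N+1}2$ is zero for odd $N$. The remaining columns come in the $R-1$ pairs $\{d,N+1-d\}$ ($2\le d\le R$), and on each pair column $N+1-d$ equals minus column $d$, both supported on $C_d$ with entries $\pm1$ by part (iii).

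First we dispose of column $1$. Since column $1$ of $U$ and column $1$ of $\JJ$ are both all ones, the terms with $1\in S$ and with $1\notin S$ pair up, which gives the overall factor $2$. More precisely, $\per(\JJ+U)=2\per(\JJ')$, where $\JJ'$ is the matrix $\JJ+U$ with column $1$ replaced by all ones (column $1$ of $M$ is all $2$'s). So we may restrict to $1\notin S$ and multiply by $2$. Likewise the middle column for odd $N$ must be outside $S$, since its $U$-column vanishes.

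Now $S$ meets each pair in $\emptyset$, $\{d\}$, $\{N+1-d\}$, or both. The main step is a sign-reversing involution that kills every $(S,\phi)$ in which some pair is met in exactly one column. Take the smallest such $d$ and toggle which column of the pair lies in $S$, keeping the same row $\phi$. The entries $U_{\phi,d}$ and $U_{\phi,N+1-d}$ are negatives of each other and nonzero simultaneously, $|S|$ is unchanged so the factorial weight is unchanged, and the choice of smallest $d$ is preserved, so this is an involution that reverses sign. The surviving terms have $S$ equal to a union of $j$ full pairs. Their rows must be distinct rows lying in the $C_d$, and each pair contributes $U_{a,d}U_{b,N+1-d}=(1)(-1)=-1$. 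Each such term therefore contributes $(-1)^j(m-2j)!$, and the number of choices of rows is exactly $r_j$. This gives $W(N)=2\sum_j(-1)^jr_j(m-2j)!$.

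I expect the main obstacle to be the bookkeeping in the factor-$2$ step together with the involution: checking that the involution is well defined, and that the factorial count of completions is correct when column $1$ is outside $S$ but still contributes in $\JJ$. I would handle this cleanly by first writing $M=\JJ+U$ with column $1$ of $U$ set aside as an extra copy of $\JJ$'s column $1$, so that column $1$ of $M$ is $2\cdot\mathbf 1$. Pulling out the $2$ then leaves a matrix whose column $1$ is $\mathbf 1$, and we expand only over columns $2,\dots,m$. The case $N=2$ is a trivial check.
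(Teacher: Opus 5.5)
Your proposal is correct and is essentially the paper's proof: it uses the same expansion of $\per(\JJ+U)$ into rook placements weighted by $(m-|P|)!$, the same sign-reversing involution that moves the rook of the smallest half-used pair to the other column in its row, and the same count $r_j$ of the surviving full-pair terms. The only difference is cosmetic: you extract the factor $2$ at the start by linearity of the permanent in column $1$ (which is $2\cdot\mathbf 1$ in $M$), whereas the paper obtains it at the end by noting that a placement contributes $(m-2j)(m-2j-1)!=(m-2j)!$ whether or not it uses column $1$.
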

\noindent The two forms agree term by term, that is, $r_j=t_{R-1,R-1-j}$, where $t$ is the triangle of Lemma~\ref{lem:trec} for the parity of $N$, so Theorem~\ref{thm:altsum2} is Theorem~\ref{thm:altsum}.
To see this for odd $N$, let the chosen pairs have supports of sizes $2a_1<\dots<2a_j$ ($1\le a_i\le R-1$) and place their rooks in this order. The supports are nested, so when the $i$th pair is placed, its support contains the $2(i-1)$ rows already used, and there are $(2a_i-2i+2)(2a_i-2i+1)=\lambda^{\mathrm o}_{b_i}$ choices, where $b_i:=a_i-i$. As $0\le b_1\le\dots\le b_j\le R-1-j$, summing over the choices gives $r_j=h_j(\lambda^{\mathrm o}_0,\dots,\lambda^{\mathrm o}_{R-1-j})=t^{\mathrm o}_{R-1,R-1-j}$. For even $N$ the sizes are $2a_i-1$ and the number of choices is $\lambda^{\mathrm e}_{b_i}$.
\begin{proof}
\emph{Expansion.} Since $M=\JJ+U$,
\[
  W(N)=\sum_\delta\prod_{u}\bigl(1+U_{u,\delta(u)}\bigr)=\sum_\delta\sum_{I'}\prod_{u\in I'}U_{u,\delta(u)} ,
\]
where $\delta$ runs over the permutations of $\{1,\dots,m\}$ and $I'$ over the sets of rows. The pair $(I',\delta|_{I'})$ is a \emph{placement} $P$ of $|P|:=|I'|$ rooks in distinct rows and columns, with weight $\mathrm{wt}(P):=\prod_{u\in I'}U_{u,\delta(u)}$. A placement extends to a permutation in $(m-|P|)!$ ways, so $W(N)=\sum_{P}(m-|P|)!\,\mathrm{wt}(P)$ over all placements, including the empty one; only placements with all rooks on nonzero entries of $U$ contribute.

\emph{A sign-reversing involution.} A column pair $\{d,N+1-d\}$ is \emph{half used} by $P$ if $P$ uses exactly one of its columns. If $P$ has a half-used pair, take the one with smallest $d$ and move its rook within its row to the other column of the pair, obtaining $P'$. That column was empty, so $P'$ is a placement with the same number of rooks, and $\mathrm{wt}(P')=-\mathrm{wt}(P)$ by Lemma~\ref{lem:colpair}(ii). $P'$ has the same half-used pairs as $P$, so the operation is an involution, and these placements cancel in pairs.

\emph{The remaining terms.} In each remaining placement every column pair is used fully or not at all, and the zero column (for odd $N$) is not used. Suppose $j$ pairs are used. The two rooks of pair $\{d,N+1-d\}$, at $(u,d)$ and $(u',N+1-d)$, contribute $U_{u,d}\,U_{u',N+1-d}=-U_{u,d}\,U_{u',d}=-[u,u'\in C_d]$ by Lemma~\ref{lem:colpair}(iii). So these $2j$ rooks contribute $(-1)^j$, and the number of nonzero choices is $r_j$ by definition.

\emph{Column $1$.} Column $1$ belongs to no pair. If it is not used, $|P|=2j$ and the contribution is $(-1)^jr_j\,(m-2j)!$. If it is used, its rook is in one of the $m-2j\ge1$ remaining rows, $|P|=2j+1$, and the contribution is $(-1)^jr_j\,(m-2j)(m-2j-1)!$, the same. This gives the factor $2$.
\end{proof}

\end{document}